\PassOptionsToPackage{dvipsnames}{xcolor}
\documentclass[12pt]{article}
\usepackage[margin=1in]{geometry}
\usepackage{setspace}
\RequirePackage{tgtermes}
\RequirePackage{newtxtext}
\usepackage{amsthm}
\RequirePackage{newtxmath}
\RequirePackage{bm}
\RequirePackage{endnotes}

\usepackage{algorithm}
\usepackage{algpseudocode}
\usepackage{tikz}

\usepackage[sort&compress]{natbib}
 \bibpunct[, ]{[}{]}{,}{n}{}{,}%
 \def\newblock{\ }%

\usepackage[colorlinks=true,linkcolor=blue,citecolor=blue,urlcolor=blue]{hyperref}

\theoremstyle{plain}
\newtheorem{theorem}{Theorem}
\newtheorem{proposition}{Proposition}

\theoremstyle{definition}
\newtheorem{definition}{Definition}

\theoremstyle{remark}
\newtheorem{remark}{Remark}

\usepackage{graphicx}
\usepackage{amsmath}
\usepackage{todonotes}
\usepackage{tikz}
\usetikzlibrary{calc}
\usepackage{pgfplots}
\usepackage{dsfont}
\pgfplotsset{compat=1.18}
\usetikzlibrary{patterns}

\usepackage[acronym]{glossaries-extra}
\setabbreviationstyle[acronym]{long-short}
\glsdisablehyper
\usepackage{caption}
\usepackage{subcaption}
\usepackage{booktabs}
\usepackage{url}

\usepackage{adjustbox}
\usepackage{csquotes}

\usepackage[capitalise]{cleveref}

\usepackage{algorithm}
\usepackage{algpseudocode}
\algrenewcomment[1]{\(\triangleright\) #1}

\definecolor{greenback}{rgb}{0.75, 0.99, 0.75}
\definecolor{redback}{rgb}{0.99, 0.75, 0.75}
\definecolor{blueback}{rgb}{0.8, 0.8, 0.99}
\definecolor{darkgreen}{rgb}{0.0, 0.5, 0.0}
\definecolor{orangeback}{rgb}{1.0, 0.6, 0.4}
\definecolor{yellowback}{rgb}{0.98, 0.85, 0.37}
\usepackage{xcolor}

\newcommand{\FPCKP}{\mathscr{F}}
\newcommand{\FLPCKP}{\mathscr{F}_\textnormal{LP}}
\newcommand{\FLG}{\mathscr{F}_\textnormal{LAG}}
\newcommand{\FDPCKP}{\mathscr{F}_\textnormal{D(LP)}}

\usetikzlibrary{fit}

\newacronym{kp}{KP}{\textit{Knapsack Problem}}
\newacronym{pckp}{PCKP}{\textit{Precedence Constrained Knapsack Problem}}
\newacronym{cpit}{CPIT}{\textit{Constrained Pit Limit Problem}}
\newacronym{upit}{UPIT}{\textit{Ultimate Pit Limit Problem}}
\newacronym{lp}{LP}{\textit{Linear Programming}}
\newacronym{ilp}{ILP}{\textit{Integer Linear Programming}}
\newacronym{mcp}{MCP}{\textit{Maximum Closure Problem}}

\begin{document}

\title{Optimal Macroitem Sequences in the Precedence Constrained Knapsack Problem}

\author{
\normalsize Valerio Dose \\
\small Dipartimento di Ingegneria Informatica, Automatica e Gestionale, Sapienza Università di Roma \\
\small \texttt{valerio.dose@uniroma1.it}
\and
\normalsize Fabio Furini \\
\small Dipartimento di Ingegneria Informatica, Automatica e Gestionale, Sapienza Università di Roma \\
\small \texttt{fabio.furini@uniroma1.it}
\and
\normalsize Marco Locatelli \\
\small Dipartimento di Ingegneria e Architettura, Università di Parma \\
\small \texttt{marco.locatelli@unipr.it}
}

\date{}
\begin{singlespace}
\maketitle

\begin{abstract}
The Precedence Constrained Knapsack Problem (PCKP) asks for a maximum-profit subset of items, subject to a knapsack capacity constraint and precedence constraints encoded by a directed acyclic graph. We study the structure of optimal solutions of the Linear Programming (LP) relaxation of the natural Integer Linear Programming formulation of the PCKP. 
We introduce the notion of \textit{macroitem} and of \textit{feasible sequence of macroitems}, which partitions the item set while respecting the precedence structure. We establish that an optimal LP solution is fully characterized by the \textit{optimal sequence of macroitems}: items are packed in nonincreasing order of the profit-to-weight ratio of their macroitem, with at most one macroitem fractionally included. We further show that the breakpoints of the parametric Lagrangian function of the capacity constraint coincide with the profit-to-weight ratios of the macroitems in the optimal sequence, and provide a complete combinatorial characterization of optimal dual solutions in terms of a feasible flow within each macroitem. Finally, for the special case in which the precedence graph is a forest, we devise an $O(n^2)$ algorithm to compute the optimal sequence, which improves to $O(n \log n)$ for in-trees or out-trees, where $n$ denotes the number of items.
\end{abstract}

\noindent\textbf{Keywords:} Precedence Constrained Knapsack Problem, Linear Programming relaxation, Lagrangian relaxation, Forests

\vspace{1em}
\end{singlespace}


\section{Introduction}

Let $\mathcal G:=(\mathcal I,\mathcal A)$ be a directed acyclic graph where $\mathcal I:= \{1,2,\dots,n\}$ is a set of $n$ items, which are the vertices of $\mathcal G$, and $\mathcal A$ is its set of $m$ arcs. An arc $(i,j) \in \mathcal A$  models the fact that item $j$ precedes item $i$, i.e., item $i$ can be selected only if item $j$ is selected as well.
Each item $i \in \mathcal I$ is
associated to a (possibly negative) integer profit $p_i\in\mathbb Z$ and a positive integer weight $w_i\in\mathbb Z_{>0}$.
Finally, let $c \in\mathbb Z_{>0}$  be the positive integer capacity of a knapsack.
Given these input data, the \gls{pckp} asks for selecting a subset of items with maximum total profit such that their total weight does not exceed the knapsack capacity and they satisfy the precedence constraints induced by the arcs of the graph $\mathcal G$. \cref{fig:graph_example}  shows a \gls{pckp} instance and an optimal solution of the problem.
\begin{figure}
\centering
\caption{A directed acyclic graph associated to a \gls{pckp} instance with $n=8$ items and $m=9$ arcs. The index $i$ of an item is indicated next to the associated item. {For each item $i \in \{1,2,\dots,8\}$, the profit $p_i$ is shown on the left side of the associated item, while the weight $w_i$ is shown on the right side; the two values are separated by a vertical line.}  An optimal \gls{pckp} solution with knapsack capacity $c=4$ is the subset of items $\{1,3,6\}$,  {whose items are colored in red}. The optimal solution value is $5$  given by the sum of the item profits $p_1$, $p_3$ and $p_6$.}
\label{fig:graph_example}
\vspace{5mm}
\begin{tikzpicture}
   \node[shape=circle,draw=black,fill=redback,minimum size=1.2cm,label=below right: {\footnotesize 1}] (v1) at (-4.5,0)   {}; 
   \node[shape=circle,draw=black,fill=blueback,minimum size=1.2cm,label=below right: {\footnotesize 2}] (v2) at (-2.25,0)  {}; 
   \node[shape=circle,draw=black,fill=redback,minimum size=1.2cm,label=below right: {\footnotesize 3}] (v3) at (0,0)  {}; 
    \node[shape=circle,draw=black,fill=blueback,minimum size=1.2cm,label=below right: {\footnotesize 4}] (v4) at (2.25,0)  {}; 
   \node[shape=circle,draw=black,fill=blueback,minimum size=1.2cm,label=below left: {\footnotesize 5}] (v5) at (-3.325,-2.25)  {}; 
   \node[shape=circle,draw=black,fill=redback,minimum size=1.2cm,label=below right: {\footnotesize 6}] (v6) at (-1.125,-2.25)  {}; 
    \node[shape=circle,draw=black,fill=blueback,minimum size=1.2cm,label=below right: {\footnotesize 7}] (v7) at (1.125,-2.25)  {}; 
   \node[shape=circle,draw=black,fill=blueback,minimum size=1.2cm,label=below right: {\footnotesize 8}] (v8) at (-1.125,-4.5)  {};

   \foreach \v/\p/\w in {v1/1/1,v2/-1/1,v3/-1/1,v4/-1/2,v5/6/2,v6/5/1,v7/2/1,v8/3/1}{
      \draw[line width=.2mm] (\v.north) -- (\v.south);
      \node[font=\small] at ([xshift=-.28cm]\v.center) {$\p$};
      \node[font=\small] at ([xshift=.28cm]\v.center) {$\w$};
   }

   \draw[->,line width=.5mm,shorten <=1pt,shorten >=1pt] (v5) to   node[above=.05cm, midway] { } (v1);
   \draw[->,line width=.5mm,shorten <=1pt,shorten >=1pt] (v5) to   node[below=.05cm] { } (v2);
   \draw[->,line width=.5mm,shorten <=1pt,shorten >=1pt] (v5) to   node[below=.05cm] { } (v6); \draw[->,line width=.5mm,shorten <=1pt,shorten >=1pt] (v6) to   node[below=.05cm] { } (v3);

   \draw[->,line width=.5mm,shorten <=1pt,shorten >=1pt] (v7) to   node[below=.05cm] { } (v3);
   \draw[->,line width=.5mm,shorten <=1pt,shorten >=1pt] (v7) to   node[below=.05cm] { } (v4);
   
   \draw[->,line width=.5mm,shorten <=1pt,shorten >=1pt] (v8) to   node[left=.05cm, midway] { } (v5);
   \draw[->,line width=.5mm,shorten <=1pt,shorten >=1pt] (v8) to   node[below=.05cm] { } (v6);
   \draw[->,line width=.5mm,shorten <=1pt,shorten >=1pt] (v8) to   node[below=.05cm] { } (v7);
\end{tikzpicture}
\end{figure}

The \gls{pckp} is NP-complete (\cite{GJ79,Johnson68}) and it is a relevant generalization of the classical \gls{kp}. The reader is referred to \cite{KPP04li,MT90} for comprehensive books on models and algorithms of the classical \gls{kp} and its variants. The \gls{pckp} arises naturally in many applications like investment planning (\cite{IK78}), production planning (\cite{JN83,SK88}) and network design (\cite{SCC97}). Moreover, in the context of open pit mining problems, the \gls{pckp} plays an important role, since it corresponds to the \gls{cpit}  (\cite{CEGMR12,EGMN13}) with a single resource and a single time period. 

The relaxed \gls{pckp} without the capacity constraint is known in the literature as the \gls{mcp}, see e.g., \cite{Picard76}. This problem asks for selecting a maximum-profit subset of items such that they satisfy the precedence constraints.  The \gls{mcp} is also referred to by different names in the mining literature such as \gls{upit} \cite{CEGMR12} or the \textit{Open-Pit Mining Problem} \cite{HC00}.
The \gls{mcp}  can be solved in polynomial time and efficient specialized algorithms have been developed. The related literature is extensive, we refer the interested reader to, e.g., 
\cite{BAL70,BA98,CO19,HC01,HC00,LG65}.

We assume that $\mathcal G$ does not contain directed cycles, since all items in such a cycle can be merged into a single one. We also assume that the knapsack capacity is strictly smaller than the total weight of all items. Otherwise, the capacity constraint is redundant, and the \gls{pckp} reduces to the \gls{mcp}.

In this paper, we develop a novel way of interpreting the structure of parametric optimal solutions of the \gls{lp} relaxation of the natural \gls{ilp} formulation for the \gls{pckp} when varying the capacity.
The mathematical structure of such solutions is known, see, e.g., \cite{CEGMR12,LG65}, and it is based on the knowledge of a parametric solution of the \gls{mcp}.
Our results reveal a new interpretation that links the structure of optimal \gls{lp} solutions for the \gls{pckp} and the \gls{kp}.
Indeed, in optimal \gls{lp} solutions of the \gls{kp}, items are selected to be put in the knapsack following their nonincreasing \textit{efficiencies} given by the profit over weight ratio. One of the contributions of this work is to show that an analogous result holds for the \gls{pckp}: subsets of items, which we call \textit{macroitems}, are selected simultaneously in nonincreasing order of efficiency, defined as the ratio of their total profit to their total weight.  The subsets of items can be obtained by maximizing this ratio while maintaining the relative precedences satisfied. We also characterize the structure of  optimal dual solutions, thus providing a novel combinatorial interpretation for both primal and dual \gls{lp} solutions. 
{
The optimal sequence of macroitems can be computed through parametric pseudoflow methods in $O(mn\log n)$ time \citep{HC08}, which for the special case of forests becomes $O(n^2\log n)$. We improve this bound by devising an $O(n^2)$ algorithm for forests, and two $O(n \log n)$ algorithms for the cases of in-forests or out-forests, i.e., forests where all items have in-degree or out-degree not larger than one, respectively. We also provide computational evidence that these improvements translate into practical performance gains over the parametric pseudoflow approach.
}

{
The remainder of the paper is organized as follows. In \cref{sec:models} we present the natural ILP formulation for the \gls{pckp} and its LP relaxation. We then collect preliminary results on a parametric version of \gls{mcp}. Finally, we introduce a related precedence-constrained ratio optimization problem. In \cref{sec:macroitems} we introduce feasible and optimal sequences of macroitems and relate them to the breakpoints of the parametric closure value function. In \cref{sec:lpckp_results} we show how the optimal sequence of macroitems determines primal and dual optimal solutions of the \gls{lp} relaxation. In \cref{sec:tree_algorithms} we present the algorithms for directed forests and their complexity analysis. In \cref{sec:computational_results} we describe the generation of the benchmark instances and report the computational comparison with the parametric pseudoflow approach. In \cref{sec:Conclusion} we summarize the main results and outline possible directions for future research.
}

\section{The natural ILP formulation of the PCKP and its relaxations}
\label{sec:models}
For each item $i \in \mathcal I$,  let us introduce a binary variable $x_i \in \{0,1\}$ taking value $1$ if and only if item $i$ is selected in the knapsack. Using these $n$ binary variables,  
the natural \gls{ilp} formulation for the \gls{pckp} reads as follows:
\begin{equation}
\label{eq:PCKP_int}
\max_{{\boldsymbol{x}} \in \{0,1\}^{\mathcal I}} ~ \left\{~~ 
\sum_{i\in \mathcal I} p_i\, x_i:~~\sum_{i\in \mathcal I} w_i\, x_i \le c,~~~~ 
  x_i-x_j \le 0,~~ (i,j)\in\mathcal A~~
 \right\}.
\end{equation}
The objective function in \eqref{eq:PCKP_int} maximizes the total profit of the selected items. The \textit{capacity constraint} in \eqref{eq:PCKP_int} imposes that the total weight of the selected items is no larger than the knapsack capacity. The \textit{precedence constraints} in \eqref{eq:PCKP_int} impose respecting the precedence relationships between pairs of items, that is, if there is an arc $(i,j) \in \mathcal A$, item $i$ can be selected only if item $j$ is selected.
The formulation \eqref{eq:PCKP_int} is called $\FPCKP$  in the remainder of this manuscript. Moreover, we denote by $\zeta(\FPCKP)$ the optimal solution value of $\FPCKP$, i.e., the optimal value of the \gls{pckp}. 

The $\FPCKP$ formulation has been studied mainly from a polyhedral perspective. In particular, \cite{BBFFS12} 
investigates the polyhedron associated with the \gls{pckp} and introduces clique-based facet-defining inequalities that strengthen the natural formulation. The work of \cite{EGM15} focuses instead on separation, developing procedures for finding maximally violated valid inequalities and showing how these cuts can be used to improve exact solution methods for the \gls{pckp}.

\subsection{Linear programming relaxation and its dual problem}
\label{sec:LP}

By replacing the binary variables in \eqref{eq:PCKP_int} with the following continuous variables:
\begin{equation*}
x_i \in [0,1], ~~~  i\in\mathcal I, 
\end{equation*}
we obtain the \gls{lp}  relaxation of $\FPCKP$ which we denote by  $\FLPCKP$.   Moreover, we denote by $\zeta(\FLPCKP)$ its optimal value that provides a valid upper bound on $\zeta(\FPCKP)$. {In this paper, we are interested in determining the parametric optimal solutions to $\FLPCKP$, for every possible capacity value $c>0$. 
}

Using a non-negative dual variable $\lambda$ for the capacity constraint, a  non-negative dual variable $\alpha_{ij}$ for each precedence constraint
corresponding to an arc $(i,j) \in \mathcal{A}$, and, for each item $i \in \mathcal{I}$, a non-negative dual variable $\mu_{i}$ for each less than or equal to one constraint, the dual problem of $\FLPCKP$  reads as follows:
\vspace{-0.75em}
\begin{subequations}
\label{eq:LPCKP_dual}
\begin{align}
\label{eq:dual_objective_function}&&\min_{\lambda \ge 0,{\boldsymbol{\alpha}}\ge \boldsymbol{0}, {\boldsymbol{\mu}}\ge \boldsymbol{0} }~~~ c \,\lambda +\sum_{i\in \mathcal I} \mu_i&\\[1 ex]
\label{eq:flow_constraint}&&w_i \,\lambda +\mu_i+\sum_{j\in\mathcal I^+(i)}\alpha_{ij}-\sum_{k\in\mathcal I^-(i)}\alpha_{ki}
 &\ge p_i, &  i\in\mathcal I.
\end{align}
\end{subequations}
For each item $i \in \mathcal{I}$, the set $\mathcal I^+(i) := \{j \in \mathcal{I} : (i,j) \in \mathcal{A}\}$ is the set of its \textit{out-neighbors},
and the set $\mathcal I^-(i) := \{k \in \mathcal{I} : (k,i) \in \mathcal{A}\}$ is the set of its \textit{in-neighbors}.
The value of a variable $\alpha_{ij}$ can be interpreted as a flow passing through the associated arc $(i,j) \in \mathcal{A}$ and, accordingly, $\sum_{j\in\mathcal I^+(i)}\alpha_{ij}$ and $\sum_{k\in\mathcal I^-(i)}\alpha_{ki}$ become the outgoing and incoming total flows of an item $i \in \mathcal{I}$, respectively. Within this interpretation, constraints \eqref{eq:flow_constraint} impose for each item $i \in \mathcal{I}$ that the difference between the outgoing and incoming total flows must be greater than or equal to the profit $p_i$ of the item minus $\lambda$ times the weight $w_i$ of the item and the value of the variable $\mu_i$.

We denote the LP formulation \eqref{eq:LPCKP_dual} by $\FDPCKP$ in the remainder of this manuscript. Moreover, we denote by $\zeta(\FDPCKP)$ its optimal value, and, by the \gls{lp} {strong duality theorem}, we have $\zeta(\FLPCKP)=\zeta(\FDPCKP)$.\\ 

\subsection{Parametric maximum closure problem}
\label{sec:LAGRANGIAN}

An explicit solution of $\FLPCKP$ is known since \cite{LG65}, and it is also derived in \cite[Proposition 3.1]{CEGMR12}, where it is applied in relation with problems in mine production planning. This solution is based on the determination of the function $u\colon[0,+\infty)\rightarrow \mathbb R$ given by
\begin{align}
\label{eq:UPIT_parametric}
u(\lambda):=\max_{ {\boldsymbol{x}} \in \{0,1\}^{\mathcal I}   } \left\{~ \sum_{i\in \mathcal I} (p_i-\lambda\,w_i)\,x_i:~~~  
  x_i-x_j \le 0,~~  (i,j)\in\mathcal A
 ~\right\},
\end{align}
which is related to the Lagrangian relaxation of the capacity constraint in $\FPCKP$. {For a fixed $\lambda$ problem \eqref{eq:UPIT_parametric} is known in the literature as the (maximum) closure problem on the graph $\mathcal G$, which can be reduced to a max-flow/min-cut problem on an extended graph (see, for example, \cite[Section 2.2]{HC01}).}

We denote with ${\mathcal P}$ the set of feasible solutions of (\ref{eq:UPIT_parametric}).
Every feasible solution $\boldsymbol{x}$ of \eqref{eq:UPIT_parametric} defines an affine function with equation
$$
f_{\boldsymbol{x}}(\lambda):=P(\boldsymbol{x})-\lambda\, W(\boldsymbol{x}),
$$
where, for any $\boldsymbol{x}\in \{0,1\}^n$, we have
$$
P(\boldsymbol{x}):=\sum_{i\in\mathcal I}p_ix_i,\qquad
W(\boldsymbol{x}):=\sum_{i\in\mathcal I}w_ix_i.
$$
With abuse of notation, for any $M\subset\mathcal I$ we also write 
$$
P(M):=\sum_{i\in M}p_i,\qquad
W(M):=\sum_{i\in M}w_i.
$$
Note that $f_{\boldsymbol{x}}$ is strictly decreasing 
if $\boldsymbol{x}\ne  0$ because weights are always positive. Since the set ${\mathcal P}$ has finite cardinality,
for every $\lambda\ge 0$ the value $u(\lambda)$ is defined as the largest of the finite number of values $f_{\boldsymbol{x}}(\lambda)$, i.e.,
$$
u(\lambda)=\max_{\boldsymbol{x}\in {\mathcal P}} f_{\boldsymbol{x}}(\lambda).
$$
Then, we have that
the function $u$ is a nonincreasing piecewise-affine convex function with a finite number of breakpoints. We indicate the breakpoints with the symbols 
$$\lambda_0=+\infty>\lambda_1>\lambda_2>\dots>\lambda_k>\lambda_{k+1}=0.$$
Breakpoints $\lambda_r$, with $r\in\{1,2,\dots,k\}$, are $\lambda$ values such that the set
$$
X^\star(\lambda)=\{\boldsymbol{x}\in {\mathcal P}\ :\ u(\lambda)=f_{\boldsymbol{x}}(\lambda)\},
$$
of optimal solutions of problem (\ref{eq:UPIT_parametric}) is not a singleton.

To compute the function $u$ and its breakpoints, one can systematically search for its distinct affine components by solving problem \eqref{eq:UPIT_parametric} for specific values of $\lambda$. At these fixed values, the formulation simplifies into a standard non-parametric graph closure problem. This strategy is applied, for instance, in the implementation of the algorithms developed by \cite{CEGMR12} for the Open-Pit Mine Production Scheduling Problem. Alternative approaches leverage algorithms designed for the general parametric max-flow/min-cut problem, with the one presented in \cite{HC08} representing the state of the art.

\bigskip
The next observation records the closure property of the feasible set of the parametric closure problem \eqref{eq:UPIT_parametric}, which is the basic order-theoretic ingredient used to compare its optimal solutions.
\begin{proposition}
\label{obs:lattice}
Given the set ${\mathcal P}$ of feasible solutions of (\ref{eq:UPIT_parametric}), we have that $({\mathcal P},\land,\lor)$ is a lattice, where $\land$ and $\lor$ are the logical and and logical or between binary vectors, respectively.
\end{proposition}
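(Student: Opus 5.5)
The plan is to view $\{0,1\}^{\mathcal I}$ as a distributive lattice under the componentwise order. Here $\land$ is the componentwise minimum (logical and) and $\lor$ is the componentwise maximum (logical or). It is therefore enough to show that ${\mathcal P}$ is a sublattice, i.e., that ${\mathcal P}$ is closed under $\land$ and $\lor$. The lattice axioms (associativity, commutativity, idempotency, absorption) then hold automatically, since they hold in the ambient Boolean lattice $\{0,1\}^{\mathcal I}$ and ${\mathcal P}$ inherits the operations. Equivalently, one may identify each $\boldsymbol{x}\in{\mathcal P}$ with its support, which is a closed set of $\mathcal G$. Under this identification $\land$ and $\lor$ become intersection and union, and the statement says that closed sets are stable under both.

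For the closure step, I would fix $\boldsymbol{x},\boldsymbol{y}\in{\mathcal P}$ and an arc $(i,j)\in\mathcal A$, so that $x_i\le x_j$ and $y_i\le y_j$. For the meet, $\min(x_i,y_i)\le x_i\le x_j$ and $\min(x_i,y_i)\le y_i\le y_j$, hence $\min(x_i,y_i)\le\min(x_j,y_j)$. For the join, $x_i\le x_j\le\max(x_j,y_j)$ and $y_i\le y_j\le\max(x_j,y_j)$, hence $\max(x_i,y_i)\le\max(x_j,y_j)$. Both vectors are binary, so every precedence constraint $x_i-x_j\le 0$ is satisfied and both $\boldsymbol{x}\land\boldsymbol{y}$ and $\boldsymbol{x}\lor\boldsymbol{y}$ belong to ${\mathcal P}$.

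I do not expect any real obstacle, because the argument is just monotonicity of $\min$ and $\max$ applied arc by arc. The only point needing care is to state clearly that the lattice structure is the restriction of the Boolean lattice. Then no axioms have to be rechecked, and ${\mathcal P}$ is nonempty, since it contains $\boldsymbol{0}$ and $\boldsymbol{1}$.
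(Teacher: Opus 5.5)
Your proposal is correct and follows essentially the same route as the paper: both reduce the claim to closure of $\mathcal P$ under $\land$ and $\lor$ and verify each precedence constraint arc by arc. The only difference is that you argue directly via monotonicity of $\min$ and $\max$, whereas the paper argues by contradiction; you also make explicit that the lattice axioms are inherited from $\{0,1\}^{\mathcal I}$, which the paper leaves implicit.
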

\begin{proof}{Proof.}
To prove that ${\mathcal P}$ is a lattice we need to show that $\boldsymbol{x},\boldsymbol{y}\in {\mathcal P}$ implies that $\boldsymbol{x}\lor \boldsymbol{y}\in {\mathcal P}$ and that $\boldsymbol{x}\land \boldsymbol{y}\in {\mathcal P}$.
If we assume by contradiction that $\boldsymbol{x}\lor \boldsymbol{y}\not\in {\mathcal P}$, then $\exists\ (i,j)\in \mathcal A$ such that $(\boldsymbol{x}\lor \boldsymbol{y})_i=1$ and $(\boldsymbol{x}\lor \boldsymbol{y})_j=0$. But if $(\boldsymbol{x}\lor \boldsymbol{y})_i=1$, then either $x_i=1$ or $y_i=1$, while $(\boldsymbol{x}\lor \boldsymbol{y})_j=0$ implies that $x_j=y_j=0$. But due to the fact that $\boldsymbol{x}, \boldsymbol{y}\in {\mathcal P}$, we must have that $x_j=1$ or $y_j=1$. Similarly, if we assume by contradiction that
$\boldsymbol{x}\land \boldsymbol{y}\notin \mathcal P$, then $\exists\ (i,j)\in \mathcal A$ such that $(\boldsymbol{x}\land \boldsymbol{y})_i=1$ and $(\boldsymbol{x}\land \boldsymbol{y})_j=0$.  But $(\boldsymbol{x}\land \boldsymbol{y})_i=1$ implies that $x_i=y_i=1$, while $(\boldsymbol{x}\land \boldsymbol{y})_j=0$ implies $x_j=0$ or $y_j=0$. But
$\boldsymbol{x},\boldsymbol{y}\in {\mathcal P}$ implies $x_j=y_j=1$.
 \end{proof}
As already observed, each breakpoint $\lambda_r$ is such that the set $X^\star(\lambda_r)$ of optimal solutions of problem (\ref{eq:UPIT_parametric}) for $\lambda=\lambda_r$ is not a singleton.
Note that
if $\bm{z}\in X^\star(\lambda_r)$, then the optimal value of (\ref{eq:UPIT_parametric}) is $f_{\bm{z}}(\lambda_r)=P(\bm{z})-\lambda_rW(\bm{z})$.
{The next result shows that the lattice structure discussed in \cref{obs:lattice} is inherited by the set of optimal solutions of the parametric closure problem at a breakpoint.}
\begin{proposition}\label{prop:optimal_solutions_lattice}
Let $\bm{z}^1, \bm{z}^2\in X^\star(\lambda_r)$. Then, $\bm{z}^1\land \bm{z}^2, \bm{z}^1\lor \bm{z}^2\in  X^\star(\lambda_r)$.
\end{proposition}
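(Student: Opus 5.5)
The plan is to combine \cref{obs:lattice} with the modularity of the objective. For binary vectors, componentwise we have $z^1_i+z^2_i=(z^1\land z^2)_i+(z^1\lor z^2)_i$. This holds because, in each coordinate, the pair of values $\{\min,\max\}$ equals the pair $\{z^1_i,z^2_i\}$. Since $f_{\bm x}(\lambda_r)=\sum_{i}(p_i-\lambda_r w_i)x_i$ is linear in $\bm x$, it follows that
\begin{equation*}
f_{\bm z^1\land\bm z^2}(\lambda_r)+f_{\bm z^1\lor\bm z^2}(\lambda_r)=f_{\bm z^1}(\lambda_r)+f_{\bm z^2}(\lambda_r)=2\,u(\lambda_r),
\end{equation*}
where the last equality uses $\bm z^1,\bm z^2\in X^\star(\lambda_r)$.

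Next, by \cref{obs:lattice} both $\bm z^1\land\bm z^2$ and $\bm z^1\lor\bm z^2$ belong to $\mathcal P$. Hence each of them is feasible for \eqref{eq:UPIT_parametric} at $\lambda=\lambda_r$, and so each satisfies $f(\lambda_r)\le u(\lambda_r)$. Two numbers, each at most $u(\lambda_r)$, that sum to $2u(\lambda_r)$ must both equal $u(\lambda_r)$. Therefore both vectors lie in $X^\star(\lambda_r)$.

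There is no real obstacle here. The only step needing care is the identity $\bm z^1+\bm z^2=(\bm z^1\land\bm z^2)+(\bm z^1\lor\bm z^2)$, which can be checked by cases on $(z^1_i,z^2_i)\in\{0,1\}^2$. The argument never uses that $\lambda_r$ is a breakpoint, so it holds for any $\lambda\ge 0$.
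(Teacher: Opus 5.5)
Your argument is correct and is essentially the paper's: the paper sets $q(\lambda_r)=\sum_j (p_j-\lambda_r w_j)$ over the items selected by $\bm z^1$ but not by $\bm z^2$, notes $f_{\bm z^1\lor\bm z^2}(\lambda_r)=f_{\bm z^2}(\lambda_r)+q(\lambda_r)$ and $f_{\bm z^1\land\bm z^2}(\lambda_r)=f_{\bm z^1}(\lambda_r)-q(\lambda_r)$, and rules out $q(\lambda_r)\neq 0$ because it would contradict the optimality of $\bm z^2$ or of $\bm z^1$. Adding those two identities gives exactly your modular equality, so your version is the same exchange argument in more compact form; it avoids both the sign case split and the separate treatment of comparable $\bm z^1,\bm z^2$.
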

\begin{proof}{Proof.}
If $\bm{z}^1\leq \bm{z}^2$, then $\bm{z}^1\land \bm{z}^2=\bm{z}^1$ and $\bm{z}^1\lor \bm{z}^2=\bm{z}^2$, so that the result is true. Similarly, if $\bm{z}^1\geq \bm{z}^2$. Therefore, let us assume that $\bm{z}^1\not\leq \bm{z}^2$ and $\bm{z}^1\not\geq \bm{z}^2$.
First of all, in view of \cref{obs:lattice}, we have that $\bm{z}^1\lor \bm{z}^2, \bm{z}^1\land \bm{z}^2 \in {\mathcal P}$.
Now, let us set
$$
q(\lambda_r)=\sum_{j\in {\mathcal M}(\bm{z}^1)\setminus {\mathcal M}(\bm{z}^2)} (p_j-\lambda_rw_j).
$$
If $q(\lambda_r)>0$, we have that
\begin{equation}
\label{eq:mor}
\begin{array}{l}
P(\bm{z}^1\lor \bm{z}^2)-\lambda_r W(\bm{z}^1\lor \bm{z}^2)= \\ [8pt]
=P(\bm{z}^2)-\lambda_r W(\bm{z}^2)+q(\lambda_r)>P(\bm{z}^2)-\lambda_r W(\bm{z}^2),
\end{array}
\end{equation}
which contradicts the optimality of $\bm{z}^2$. If $q(\lambda_r)<0$, we have that
\begin{equation}
\label{eq:mand}
\begin{array}{l}
P(\bm{z}^1\land \bm{z}^2)-\lambda_rW(\bm{z}^1\land \bm{z}^2)= \\ [8pt]
= P(\bm{z}^1)-\lambda_r W(\bm{z}^1)-q(\lambda_r)>P(\bm{z}^1)-\lambda_r W(\bm{z}^1),
\end{array}
\end{equation}
which contradicts the optimality of $\bm{z}^1$.

Therefore, only
$q(\lambda_r)=0$ is possible. But in this case (\ref{eq:mor}) and (\ref{eq:mand}) show that $\bm{z}^1\lor \bm{z}^2$ and $\bm{z}^1\land \bm{z}^2$ are also both optimal for (\ref{eq:UPIT_parametric})
as we wanted to prove.
 \end{proof}

Now, for each breakpoint $\lambda_r$ we set
$$
\begin{array}{lll}
\boldsymbol{x}^{r-1}&=&\bigwedge\limits_{\bm{z}\in X^\star(\lambda_r)} \bm{z} \\ [8pt]
\boldsymbol{x}^{r}&=&\bigvee\limits_{\bm{z}\in X^\star(\lambda_r)} \bm{z}, 
\end{array}
$$
i.e., $\boldsymbol{x}^{r-1}$ is the minimal optimal solution in $X^\star(\lambda_r)$, while $\boldsymbol{x}^{r}$ is the maximal optimal solution in $X^\star(\lambda_r)$.
Obviously $\boldsymbol{x}^r\geq \boldsymbol{x}^{r-1}$.
Noticing that $q(\lambda)$ is decreasing, then the maximal solution $\boldsymbol{x}^r$ will be the unique optimal solution of (\ref{eq:UPIT_parametric}) for $\lambda\in (\lambda_{r+1},\lambda_{r})$, while the minimal solution
$\boldsymbol{x}^{r-1}$ will be the unique optimal solution of (\ref{eq:UPIT_parametric}) for $\lambda\in (\lambda_{r},\lambda_{r-1})$. Stated in another way, we have that
$u(\lambda)=f_{\boldsymbol{x}^r}(\lambda)$ for $\lambda\in (\lambda_{r+1},\lambda_{r})$, while $u(\lambda)=f_{\boldsymbol{x}^{r-1}}(\lambda)$ for $\lambda\in (\lambda_{r},\lambda_{r-1})$.
Note that the breakpoint $\lambda_r$ is the intersection of the two lines $f_{\boldsymbol{x}^{r-1}}(\lambda)$ and $f_{\boldsymbol{x}^r}(\lambda)$, so that
\begin{equation}
\label{eq:deflambdar}
\lambda_r=\frac{P(\boldsymbol{x}^r-\boldsymbol{x}^{r-1})}{W(\boldsymbol{x}^r-\boldsymbol{x}^{r-1})}.
\end{equation}
{The following theorem gives the main interpretation of each breakpoint: it is the best profit-to-weight ratio that can be obtained by adding a new feasible block of items.}
\begin{theorem}
\label{theo:lambdar}
For every $r\in\{1,2,\dots,k\}$, the vector $\boldsymbol{x}^{r}-\boldsymbol{x}^{r-1}$ is the optimal solution with largest support of the binary linear fractional problem
\begin{equation}\label{eq:max_ratio_problem}
\begin{array}{lll}
\displaystyle\max_{ {\boldsymbol{x}} \in \{0,1\}^{\mathcal I}   } &  \frac{P(\boldsymbol{x})}{W(\boldsymbol{x})} & \\[8pt] 
& x_i-x_j \le 0, &(i,j)\in\mathcal A\text{ such that }(\boldsymbol{x}^{r-1})_i=(\boldsymbol{x}^{r-1})_j=0,\\[8pt]
&x_i =0, & i\text{ such that }(\boldsymbol{x}^{r-1})_i=1,\\[8pt]
&\sum_{i\in\mathcal I}x_i\ge 1,
\end{array}
\end{equation}
where the last constraint guarantees that the objective function is always defined in a feasible point.
Equivalently, in view of (\ref{eq:deflambdar}), the breakpoint $\lambda_r$ is the optimal value of the above problem.
\end{theorem}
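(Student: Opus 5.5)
The plan is to reduce the fractional problem \eqref{eq:max_ratio_problem} to the parametric closure problem \eqref{eq:UPIT_parametric} at $\lambda=\lambda_r$, via a correspondence between feasible points of \eqref{eq:max_ratio_problem} and closures that contain $\boldsymbol{x}^{r-1}$. The basic observation is this. If $\boldsymbol{y}$ is feasible for \eqref{eq:max_ratio_problem}, then $\boldsymbol{z}:=\boldsymbol{x}^{r-1}+\boldsymbol{y}$ is a binary vector in ${\mathcal P}$. To see this, take an arc $(i,j)$ with $z_i=1$. If $(\boldsymbol{x}^{r-1})_i=1$, then $(\boldsymbol{x}^{r-1})_j=1$ because $\boldsymbol{x}^{r-1}\in{\mathcal P}$. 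Otherwise $y_i=1$. In that case either $(\boldsymbol{x}^{r-1})_j=1$, or both endpoints are outside the support of $\boldsymbol{x}^{r-1}$, so the constraint $y_i\le y_j$ applies and gives $y_j=1$. Conversely, if $\boldsymbol{z}\in{\mathcal P}$ and $\boldsymbol{z}\ge\boldsymbol{x}^{r-1}$, then $\boldsymbol{z}-\boldsymbol{x}^{r-1}$ satisfies all the constraints of \eqref{eq:max_ratio_problem} except possibly $\sum_i x_i\ge 1$. By \eqref{eq:deflambdar}, the vector $\boldsymbol{x}^r-\boldsymbol{x}^{r-1}$ is feasible and attains ratio $\lambda_r$. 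It is nonzero because $\boldsymbol{x}^r\ne\boldsymbol{x}^{r-1}$, which holds since $X^\star(\lambda_r)$ is not a singleton.

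Next I show that no feasible $\boldsymbol{y}$ does better. Suppose $P(\boldsymbol{y})/W(\boldsymbol{y})\ge\lambda_r$. Since $W(\boldsymbol{y})>0$, this gives $P(\boldsymbol{y})-\lambda_r W(\boldsymbol{y})\ge 0$. Then
\[
f_{\boldsymbol{z}}(\lambda_r)=f_{\boldsymbol{x}^{r-1}}(\lambda_r)+P(\boldsymbol{y})-\lambda_rW(\boldsymbol{y})\ge u(\lambda_r),
\]
and strict inequality would contradict the definition of $u$. Hence $P(\boldsymbol{y})/W(\boldsymbol{y})\le\lambda_r$ for every feasible $\boldsymbol{y}$, so $\lambda_r$ is the optimal value. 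Moreover, equality holds exactly when $\boldsymbol{z}\in X^\star(\lambda_r)$. Therefore the optimal solutions of \eqref{eq:max_ratio_problem} are precisely the vectors $\boldsymbol{z}-\boldsymbol{x}^{r-1}$ with $\boldsymbol{z}\in X^\star(\lambda_r)$ and $\boldsymbol{z}\ne\boldsymbol{x}^{r-1}$. Every element of $X^\star(\lambda_r)$ is $\ge\boldsymbol{x}^{r-1}$ by minimality, so this describes all of them.

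For the largest-support claim, the definition of $\boldsymbol{x}^r$ gives $\boldsymbol{z}\le\boldsymbol{x}^r$ for all $\boldsymbol{z}\in X^\star(\lambda_r)$. This is where \cref{prop:optimal_solutions_lattice} enters, since it guarantees that the join $\boldsymbol{x}^r$ lies in $X^\star(\lambda_r)$. Hence every optimal $\boldsymbol{y}$ satisfies $\boldsymbol{y}\le\boldsymbol{x}^r-\boldsymbol{x}^{r-1}$ componentwise, so $\boldsymbol{x}^r-\boldsymbol{x}^{r-1}$ is the unique optimal solution of maximal support, and its support contains that of every other optimum. I expect the main obstacle to be the feasibility transfer between the two problems, in particular checking carefully that restricting the precedence constraints to arcs with both endpoints outside $\boldsymbol{x}^{r-1}$ loses nothing. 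This comes down to the case analysis above, and it uses the fact that $\boldsymbol{x}^{r-1}$ is itself a closure.
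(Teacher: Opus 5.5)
Your proof is correct and follows essentially the paper's argument: lift a feasible $\boldsymbol{y}$ to the closure $\boldsymbol{x}^{r-1}+\boldsymbol{y}\in{\mathcal P}$ and contradict optimality of the parametric closure problem at $\lambda_r$. The one difference in the core step is that you compare with $f_{\boldsymbol{x}^{r-1}}(\lambda_r)$ rather than $f_{\boldsymbol{x}^{r}}(\lambda_r)$, which avoids the paper's algebraic manipulation; this needs $\boldsymbol{x}^{r-1}\in X^\star(\lambda_r)$, which also follows from \cref{prop:optimal_solutions_lattice}, applied to meets rather than joins. Beyond that, you explicitly check the feasibility correspondence and prove the largest-support claim, both of which the paper's proof leaves implicit.
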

\begin{proof}{Proof.}
Let us assume by contradiction that there exists $\boldsymbol{x}^t\in {\mathcal P}$ with $\boldsymbol{x}^t-\boldsymbol{x}^{r-1}$ feasible for (\ref{eq:max_ratio_problem}) and such that
\begin{equation}
\label{eq:ipass}
\frac{P(\boldsymbol{x}^t-\boldsymbol{x}^{r-1})}{W(\boldsymbol{x}^t-\boldsymbol{x}^{r-1})}>\frac{P(\boldsymbol{x}^r-\boldsymbol{x}^{r-1})}{W(\boldsymbol{x}^r-\boldsymbol{x}^{r-1})}.
\end{equation}
We prove that (\ref{eq:ipass}) implies
\begin{equation}
\label{eq:assconc}
f_{\boldsymbol{x}^t}(\lambda_r)=P(\boldsymbol{x}^t)-\lambda_r W(\boldsymbol{x}^t) > P(\boldsymbol{x}^r)-\lambda_r W(\boldsymbol{x}^r)=f_{\boldsymbol{x}^r}(\lambda_r),
\end{equation}
which contradicts the optimality of $\boldsymbol{x}^r$ for (\ref{eq:UPIT_parametric}).
To see this, we first recall that
\begin{equation}
\label{eq:breakp}
\lambda_r= \frac{P(\boldsymbol{x}^r-\boldsymbol{x}^{r-1})}{W(\boldsymbol{x}^r-\boldsymbol{x}^{r-1})},
\end{equation}
so that
$$
\begin{array}{l}
P(\boldsymbol{x}^t)-\lambda_r W(\boldsymbol{x}^t) =P(\boldsymbol{x}^t)-\frac{P(\boldsymbol{x}^r-\boldsymbol{x}^{r-1})}{W(\boldsymbol{x}^r-\boldsymbol{x}^{r-1})} W(\boldsymbol{x}^t) \\ [8pt]
P(\boldsymbol{x}^r)-\lambda_r W(\boldsymbol{x}^r) =P(\boldsymbol{x}^r)-\frac{P(\boldsymbol{x}^r-\boldsymbol{x}^{r-1})}{W(\boldsymbol{x}^r-\boldsymbol{x}^{r-1})} W(\boldsymbol{x}^r).
\end{array}
$$
Then, after setting $\bm{z}^r=\boldsymbol{x}^r-\boldsymbol{x}^{r-1}$, we have that:
\begin{equation}
\label{eq:aaa}
\begin{array}{l}
W(\bm{z}^r)\left\{[P(\boldsymbol{x}^t)-\lambda_r W(\boldsymbol{x}^t)]-[P(\boldsymbol{x}^r)-\lambda_r W(\boldsymbol{x}^r)]\right\}= \\ [8pt]
=\left[P(\boldsymbol{x}^t)W(\bm{z}^r)-W(\boldsymbol{x}^t)P(\bm{z}^r)-P(\boldsymbol{x}^r)W(\bm{z}^r)+W(\boldsymbol{x}^r)P(\bm{z}^r)\right]=\\[8pt]
=\left[P(\boldsymbol{x}^t)W(\bm{z}^r)-W(\boldsymbol{x}^t)P(\bm{z}^r)+P(\boldsymbol{x}^r)W(\boldsymbol{x}^{r-1})-P(\boldsymbol{x}^{r-1})W(\boldsymbol{x}^r)\right]. \\ [8pt]
\end{array}
\end{equation}
We can rewrite (\ref{eq:ipass}) as follows:
$$
P(\boldsymbol{x}^t)W(\bm{z}^r)-P(\boldsymbol{x}^{r-1})W(\bm{z}^r)>P(\boldsymbol{x}^r)W(\boldsymbol{x}^t-\boldsymbol{x}^{r-1})-P(\boldsymbol{x}^{r-1})W(\boldsymbol{x}^t-\boldsymbol{x}^{r-1}),
$$
and then also as:
$$
P(\boldsymbol{x}^t)W(\bm{z}^r)-W(\boldsymbol{x}^t)P(\bm{z}^r)+P(\boldsymbol{x}^r)W(\boldsymbol{x}^{r-1})-P(\boldsymbol{x}^{r-1})W(\boldsymbol{x}^r)>0,
$$
which, in view of the last equation in (\ref{eq:aaa}) and of  $W(\bm{z}^r)>0$, implies (\ref{eq:assconc}).
 \end{proof}
\begin{remark}\label{rem:M_r_def}
Notice that if the subset of items associated to the last vector $\boldsymbol x^{k}$ is not the whole set $\mathcal I$, then Problem \eqref{eq:max_ratio_problem} has feasible solutions also choosing $r=k+1$, since the constraints depend only on  $\boldsymbol x^{r-1}$. The optimal value in that case would be negative, and it would not be associated to an affine piece of the function $u$. This allows us to extend the sequence $\boldsymbol x^{r}$ until the last element of the sequence is always associated with the full set of items $\mathcal I$. Accordingly, the sequence $\boldsymbol \lambda$ of breakpoints can be extended to include some negative values computed according to \cref{eq:deflambdar}.  For the purpose of our exposition, we consider these extended sequences and we define, for every index $r$ in these sequences, the subsets $\mathcal M_r$ as
\begin{equation*}
\mathcal M_r:=\{i\in\mathcal I\colon (\boldsymbol x^r)_i=1\}.
\end{equation*}
\end{remark}
{The last observation in this section is a simple ratio property for disjoint sets, used later to compare unions of candidate macroitems.}
\begin{proposition}
\label{obs:disjoint}
Let $M_1,M_2\subset \mathcal I$ be nonempty. If $M_1$ and $M_2$ are disjoint, i.e., $M_1\cap M_2=\emptyset$, then it cannot hold that
\begin{equation}
\label{eq:nothold}
\frac{P(M_1\cup M_2)}{W(M_1\cup M_2)}>\frac{P(M_1)}{W(M_1)},\frac{P(M_2)}{W(M_2)}\text{ or }\frac{P(M_1\cup M_2)}{W(M_1\cup M_2)}<\frac{P(M_1)}{W(M_1)},\frac{P(M_2)}{W(M_2)},
\end{equation}
equivalently, $\frac{P(M_1\cup M_2)}{W(M_1\cup M_2)}\le\max\left\{\frac{P(M_1)}{W(M_1)},\frac{P(M_2)}{W(M_2)}\right\}$ and $\frac{P(M_1\cup M_2)}{W(M_1\cup M_2)}\ge\min\left\{\frac{P(M_1)}{W(M_1)},\frac{P(M_2)}{W(M_2)}\right\}$. Moreover, 
\begin{equation}
\label{eq:equal}
\frac{P(M_1\cup M_2)}{W(M_1\cup M_2)}\geq (\text{or }\le) \frac{P(M_1)}{W(M_1)},\frac{P(M_2)}{W(M_2)} \ \Rightarrow\ \frac{P(M_1\cup M_2)}{W(M_1\cup M_2)}=\frac{P(M_1)}{W(M_1)}=\frac{P(M_2)}{W(M_2)}.
\end{equation}
\end{proposition}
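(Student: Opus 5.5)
The plan is to use the mediant property. Since $M_1\cap M_2=\emptyset$, we have $P(M_1\cup M_2)=P(M_1)+P(M_2)$ and $W(M_1\cup M_2)=W(M_1)+W(M_2)$. Set $\rho_1:=P(M_1)/W(M_1)$ and $\rho_2:=P(M_2)/W(M_2)$; both are well defined because the sets are nonempty and all weights are positive. With $\theta:=W(M_1)/(W(M_1)+W(M_2))\in(0,1)$, the ratio of the union can be written as
\begin{equation*}
\frac{P(M_1\cup M_2)}{W(M_1\cup M_2)}=\theta\,\rho_1+(1-\theta)\,\rho_2 .
\end{equation*}
This identity is the whole argument; everything else follows from properties of convex combinations.

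First, a convex combination of two reals lies between them. Hence $\min\{\rho_1,\rho_2\}\le\theta\rho_1+(1-\theta)\rho_2\le\max\{\rho_1,\rho_2\}$. This is exactly the equivalent form of the claim that neither strict inequality in \eqref{eq:nothold} holds. To show the two formulations are the same, note that ``strictly greater than both'' is the same as ``strictly greater than the max'', and ``strictly less than both'' is the same as ``strictly less than the min''.

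For \eqref{eq:equal}, take the ``$\ge$'' case; the ``$\le$'' case is symmetric, obtained by negating profits or reversing inequalities. Suppose $\theta\rho_1+(1-\theta)\rho_2\ge\rho_1$ and $\theta\rho_1+(1-\theta)\rho_2\ge\rho_2$. The first inequality gives $(1-\theta)(\rho_2-\rho_1)\ge0$, so $\rho_2\ge\rho_1$ because $1-\theta>0$. The second gives $\theta(\rho_1-\rho_2)\ge0$, so $\rho_1\ge\rho_2$ because $\theta>0$. Therefore $\rho_1=\rho_2$, and the union ratio equals this common value.

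The only point needing care is the strictness of $\theta\in(0,1)$, which is what makes the equality case work. It follows from $W(M_1),W(M_2)>0$, and that in turn holds because the sets are nonempty and all weights $w_i$ are positive integers. No earlier result of the paper is needed beyond the definitions of $P$ and $W$.
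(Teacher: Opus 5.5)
Your proof is correct and is essentially the paper's argument. The paper also uses disjointness to write the union ratio as the mediant $\frac{P(M_1)+P(M_2)}{W(M_1)+W(M_2)}$, and shows that exceeding $\frac{P(M_1)}{W(M_1)}$ forces $\frac{P(M_2)}{W(M_2)}>\frac{P(M_1)}{W(M_1)}$ and vice versa. Those are exactly your steps $(1-\theta)(\rho_2-\rho_1)\ge 0$ and $\theta(\rho_1-\rho_2)\ge 0$. Your convex-combination form with $\theta\in(0,1)$ simply packages both parts in one identity and spells out the equality case \eqref{eq:equal}, which the paper leaves as ``completely similar''.
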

\begin{proof}{Proof.}
Let us assume by contradiction that the first inequality in (\ref{eq:nothold}) holds (the proof for the second inequality is analogous). Then, since $M_1\cap M_2=\emptyset$:
$$
\frac{P(M_1\cup M_2)}{W(M_1\cup M_2)}=\frac{P(M_1)+P(M_2)}{W(M_1)+W(M_2)}>\frac{P(M_1)}{W(M_1)},\frac{P(M_2)}{W(M_2)}.
$$
But
$$
\begin{array}{lll}
\frac{P(M_1)+P(M_2)}{W(M_1)+W(M_2)}>\frac{P(M_1)}{W(M_1)} & \Rightarrow & \frac{P(M_2)}{W(M_2)}>\frac{P(M_1)}{W(M_1)} \\ [8pt]
\frac{P(M_1)+P(M_2)}{W(M_1)+W(M_2)}>\frac{P(M_2)}{W(M_2)} & \Rightarrow & \frac{P(M_2)}{W(M_2)}<\frac{P(M_1)}{W(M_1)}, \\ [8pt]
\end{array}
$$
which is not possible. In a completely similar way we can also prove (\ref{eq:equal}).
 \end{proof}
\subsection{{A related ratio optimization problem}}\label{sec:ratio_problem}
\begingroup
Let us also introduce the following precedence-constrained ratio optimization problem, which corresponds to the \gls{mcp} with a fractional objective function.  The problem reads as follows:
\begin{equation}
\label{eq:ratio_problem}
\max_{\boldsymbol{x} \in \{0,1\}^{\mathcal I}} \left\{
\frac{\sum_{i\in \mathcal I} p_i\, x_i}{\sum_{i\in \mathcal I} w_i\, x_i}:~~
x_i-x_j \le 0,\ (i,j)\in\mathcal A,\quad
\sum_{i\in\mathcal I}w_ix_i>0
\right\}.
\end{equation}
In this model, the coefficient $p_i$ can be seen as the return, benefit, or revenue generated by item $i$, while $w_i$ may represent the amount of capital, budget, or resource consumption required to select it. Thus, the objective maximizes the return per unit of invested resource over all feasible solutions satisfying the precedence constraints. The strict positivity condition on the denominator simply excludes the empty solution.
Ratio objectives of this type are classical in fractional programming \citep{CharnesCooper1962,Schaible1976} and arise naturally when the goal is to maximize an efficiency measure. In finance, for instance, related ratios are used to compare returns with the amount of capital or risk required to obtain them, as in return-on-investment and risk-adjusted performance criteria \citep{Sharpe1966}. Problem \eqref{eq:ratio_problem} is a combinatorial problem in its own right, and the results of this paper can be used to solve it efficiently.
\endgroup

\section{Macroitems}\label{sec:macroitems}
In order to show the combinatorial structure of an optimal solution of $\FLPCKP$, we introduce a few definitions.
\begin{definition}\label{def:macroitem}
A \textit{macroitem} is a subset $\mathcal M\subset\mathcal I$ of items.
\end{definition}
In what follows we denote with $\boldsymbol{x}^{\mathcal M}$ the binary vector corresponding to the indicator of set ${\mathcal M}$, i.e., $\left(\boldsymbol{x}^{\mathcal M}\right)_i=1$ if and only if $i\in {\mathcal M}$.
We introduce the following definition.
\begin{definition}\label{def:feas_seq}
A \textit{feasible sequence $\mathcal S$ of macroitems} is an ordered partition of the item set $\mathcal I$ into $k(\mathcal S)$ macroitems
$$\mathcal S=\big(\mathcal I_1,\mathcal I_2,\dots,\mathcal I_{k(\mathcal S)}\big)$$ 
 such that, for each $r\in\{1,2,\dots,k(\mathcal S)\}$ and each $j\in\mathcal I_r$, all out-neighbors of $j$ belong to preceding macroitems or to the macroitem itself, i.e.,
\begin{equation} 
\mathcal I^+(j)\subset\bigcup_{s=1}^r \mathcal I_s,
\end{equation}
where $\mathcal I^+(j)$ is the set of out-neighbors of item $j$ in graph ${\mathcal G}$. Stated in another way, a sequence is feasible if for each $r\in\{1,2,\dots,k(\mathcal S)\}$ it holds that
the binary vector $\boldsymbol{x}^{\cup_{s=1}^r {\mathcal I}_s}\in {\mathcal P}$, i.e., it is feasible for (\ref{eq:UPIT_parametric}).
\end{definition}
For each macroitem $\mathcal I_r$ in a feasible sequence, the profit $P_r$ and the weight $W_r$ are defined as
$$
P_r:=P(\mathcal I_r)=\sum_{i\in\mathcal I_r}p_i,\qquad
W_r:=W(\mathcal I_r)=\sum_{i\in\mathcal I_r}w_i.
$$
We introduce the notion of {\em split macroitem} of a feasible sequence.
\begin{definition}\label{def:split_macroitem}
Given a feasible sequence of macroitems $\mathcal S=\big(\mathcal I_1,\mathcal I_2,\dots,\mathcal I_{k(\mathcal S)}\big)$ and the capacity $c$ of a knapsack, the \textit{split macroitem} of $\mathcal S$ 
is the macroitem $\mathcal I_{h(\mathcal S)}$ such that
$$
\sum_{r=1}^{h(\mathcal S)-1} W_r\le c,\qquad
\sum_{r=1}^{h(\mathcal S)} W_r> c.
$$
\end{definition}
By the standing assumption on the capacity, the split macroitem exists for every feasible sequence. Given a feasible sequence of macroitems, we can construct a feasible solution of $\FLPCKP$ as follows:
$$
x_i =
\begin{cases}
1, & i \in \mathcal I_r,\quad r\in\{1,2,\dots,h(\mathcal S)-1\},\\[8pt]
\dfrac{c-\sum_{s=1}^{h(\mathcal S)-1}W_s}{W_{h(\mathcal S)}}, & i \in \mathcal I_{h(\mathcal S)},\\[8pt]
0, & i \in \mathcal I_r,\quad r\in\{h(\mathcal S)+1,h(\mathcal S)+2,\dots,k(\mathcal S)\}.
\end{cases}
$$
In other words, we take entire macroitems of the ordered sequence up to the split macroitem, we put only a fraction of the split macroitem (the fraction needed to fill the capacity), and we do not take all the other macroitems.
A feasible sequence of macroitems and its associated solution of maximum capacity for the example in \cref{fig:graph_example}, is represented in \cref{fig:feasible_sequence}. 
\begin{figure}
\centering
\caption{The sequence $\mathcal S=(\mathcal I_1,\mathcal I_2,\mathcal I_3,\mathcal I_4)$
is a feasible sequence of macroitems. With capacity $c=4$ the split macroitem of $\mathcal S$ is
$\mathcal I_2$ and the associated solution of maximum capacity assigns $x_i=1$ if 
$i\in\mathcal I_1$, $x_i=\frac 13$ if $i\in\mathcal I_2$, $x_i=0$ if 
$i\in\mathcal I_3\cup\mathcal I_4$, 
for a total profit of $-\frac 23$. The sequence $\mathcal S'=(\mathcal I_1,\mathcal I_3,\mathcal I_2,\mathcal I_4)$ is also feasible, and would give us an associated solution of total profit $\frac 83$.}
\label{fig:feasible_sequence}
\vspace{5mm}
\begin{tikzpicture}
   \node[shape=circle,draw=black,fill=redback,minimum size=1.2cm] (v1) at (-4.5,0)  {}; 
   \node[shape=circle,draw=black,fill=redback,minimum size=1.2cm] (v2) at (-2.25,0)  {}; 
   \node[shape=circle,draw=black,fill=redback,minimum size=1.2cm] (v3) at (0,0)  {}; 
    \node[shape=circle,draw=black,fill=yellowback,minimum size=1.2cm] (v4) at (2.25,0)  {}; 
   \node[shape=circle,draw=black,fill=greenback,minimum size=1.2cm] (v5) at (-3.325,-2.25)  {}; 
   \node[shape=circle,draw=black,fill=greenback,minimum size=1.2cm] (v6) at (-1.125,-2.25)  {}; 
    \node[shape=circle,draw=black,fill=yellowback,minimum size=1.2cm] (v7) at (1.125,-2.25)  {}; 
   \node[shape=circle,draw=black,fill=blueback,minimum size=1.2cm] (v8) at (-1.125,-4.5)  {};

   \foreach \v/\p/\w in {v1/1/1,v2/-1/1,v3/-1/1,v4/-1/2,v5/6/2,v6/5/1,v7/2/1,v8/3/1}{
      \draw[line width=.2mm] (\v.north) -- (\v.south);
      \node[font=\small] at ([xshift=-.28cm]\v.center) {$\p$};
      \node[font=\small] at ([xshift=.28cm]\v.center) {$\w$};
   }

   \draw[->,line width=.5mm,shorten <=1pt,shorten >=1pt] (v5) to   node[above=.05cm, midway] { } (v1);
   \draw[->,line width=.5mm,shorten <=1pt,shorten >=1pt] (v5) to   node[below=.05cm] { } (v2);
   \draw[->,line width=.5mm,shorten <=1pt,shorten >=1pt] (v5) to   node[below=.05cm] { } (v6);

   \draw[->,line width=.5mm,shorten <=1pt,shorten >=1pt] (v6) to   node[below=.05cm] { } (v3);

   \draw[->,line width=.5mm,shorten <=1pt,shorten >=1pt] (v7) to   node[below=.05cm] { } (v3);
   \draw[->,line width=.5mm,shorten <=1pt,shorten >=1pt] (v7) to   node[below=.05cm] { } (v4);
   
   \draw[->,line width=.5mm,shorten <=1pt,shorten >=1pt] (v8) to   node[left=.05cm, midway] { } (v5);
   \draw[->,line width=.5mm,shorten <=1pt,shorten >=1pt] (v8) to   node[below=.05cm] { } (v6);
   \draw[->,line width=.5mm,shorten <=1pt,shorten >=1pt] (v8) to   node[below=.05cm] { } (v7);
  \draw[dashed,thick,rounded corners=0.7cm] (-5.6,0.9)  -- (1.05,0.9)-- (1.05,-0.9) -- (-5.6,-0.9) -- cycle;
  \node at (-3.325,0) {$\mathcal I_1$};
  \draw[dashed,thick,rounded corners=0.7cm] (-4.9,-1.4)  -- (-0.1,-1.4)-- (-0.1,-3.1) -- (-4.9,-3.1) -- cycle;
  \node at (-4.3,-2.25) {$\mathcal I_3$};
  \draw[dashed,thick,rounded corners=0.7cm] (1.9,1.1) -- (3.3,0.45)  -- (1.65,-3.3)  -- (0,-2.8) --cycle;
  \node at (1.3,-1) {$\mathcal I_2$};
   \draw[dashed,thick,rounded corners=0.7cm] (-2.1,-3.6)  -- (0.5,-3.6)-- (0.5,-5.4) -- (-2.1,-5.4) -- cycle;
   \node at (0,-4.5) {$\mathcal I_4$};
\end{tikzpicture}
\end{figure}
Note that the feasible solution found is very bad (it has negative profit) and it could easily be improved by just reordering the sequence of macroitems, while keeping it feasible. For this reason it is useful to introduce the following definitions.
\begin{definition}
A \textit{nonincreasing sequence of macroitems} $\mathcal S:=(\mathcal I_1,\mathcal I_2,\dots,\mathcal I_{k(\mathcal S)})$ is a feasible sequence such that
$$
r<s\Rightarrow \frac{P_r}{W_r}\ge\frac{P_s}{W_s},
$$
where for any macroitem $\mathcal I_r$, we call $\frac{P_r}{W_r}$ the ratio of the macroitem. If 
$$
r<s\Rightarrow \frac{P_r}{W_r}>\frac{P_s}{W_s},
$$
then we call the sequence a \textit{decreasing sequence} of macroitems. 
\end{definition}
Observing that
$$
\frac{P_r}{W_r}=\frac{P_{r+1}}{W_{r+1}}\ \ \Rightarrow\ \ \frac{P_r}{W_r}=\frac{P_{r+1}+P_r}{W_{r+1}+W_r},
$$
any nonincreasing sequence $\mathcal S$ can always be replaced by a decreasing sequence by replacing each pair of macroitems with the same ratio with the union of the two macroitems.

\bigskip
Now we can define the sequences of macroitems which allow us to compute optimal solutions of $\FLPCKP$.
\begin{definition}\label{def:optimal_seq}
An \textit{optimal sequence of macroitems} $\mathcal S$ is a feasible sequence $(\mathcal I_1,\mathcal I_2,\dots,\mathcal I_{k(\mathcal S)})$,
which is also maximal in the lexicographic order induced by the order $\succ$ on macroitems defined by stating that $\mathcal M\succ\mathcal M'$ if
\begin{itemize}
\item the ratio of $\mathcal M$ is strictly larger than the ratio of $\mathcal M'$ or
\item $\mathcal M$ and $\mathcal M'$ have the same ratio and $\mathcal M$ has a strictly larger weight than $\mathcal M'$.
\end{itemize}

\end{definition}
\begin{remark}\label{rem:opt_sequence_unique_decr}
Note that an optimal sequence of macroitems exists and it is unique. It is also decreasing by definition.
\end{remark}
The following result gives us the interpretation of the breakpoints of the function $u$, as the profit/weight ratios associated to the macroitems in the optimal sequence with positive ratio.
Recall that $\mathcal M_1,\dots,\mathcal M_k$ are the subsets with indicator vectors $\boldsymbol{x}^1,\dots,\boldsymbol{x}^k$, obtained by the iterative solution of Problem \eqref{eq:max_ratio_problem}.
Following Remark \ref{rem:M_r_def}, the sequence is extended to include also macroitems with negative profit. The first vectors of this sequence, more precisely, those with positive profit, are also those giving the affine pieces of the function $u$ as explained in \cref{theo:lambdar,rem:M_r_def}.
\begin{theorem}\label{pr:optimal_macroitems_maximize_ratio}
The sequence $\mathcal S:=(\mathcal I_1,\mathcal I_2,\dots,\mathcal I_k)$ where
$$
\mathcal I_1=\mathcal M_1,\qquad\mathcal I_r=\mathcal M_r\setminus\mathcal M_{r-1}\quad\text{for }r\in\{2,3,\dots,k\},
$$
is the optimal sequence of macroitems.
\end{theorem}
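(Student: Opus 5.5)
We show that the sequence $\mathcal S$ built from the sets $\mathcal M_r$ is feasible and lexicographically maximal for $\succ$. Uniqueness then follows from \cref{rem:opt_sequence_unique_decr}.

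\emph{Feasibility.} The vectors $\boldsymbol x^r$ are maximal optimal solutions of \eqref{eq:UPIT_parametric}, or, in the extended range of \cref{rem:M_r_def}, are obtained by adding a feasible solution of \eqref{eq:max_ratio_problem} to $\boldsymbol x^{r-1}$. Hence $\boldsymbol x^{\mathcal M_r}=\boldsymbol x^{\cup_{s\le r}\mathcal I_s}\in\mathcal P$ for every $r$, and the sets $\mathcal I_r$ partition $\mathcal I$ because the last $\mathcal M_r$ equals $\mathcal I$. For the extended part we need to check that adding to $\boldsymbol x^{r-1}$ a vector $\boldsymbol y$ feasible for \eqref{eq:max_ratio_problem} stays in $\mathcal P$. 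Take an arc $(i,j)$ with $y_i=1$. Then $(\boldsymbol x^{r-1})_i=0$. Either $(\boldsymbol x^{r-1})_j=1$, or the arc is among the constrained ones, which forces $y_j=1$. In both cases the arc is satisfied.

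\emph{Lexicographic maximality.} We argue by induction on $r$. Assume the first $r-1$ macroitems of any lexicographically maximal feasible sequence coincide with $\mathcal I_1,\dots,\mathcal I_{r-1}$, so their union is $\mathcal M_{r-1}$. A candidate next macroitem $\mathcal I'$ must make $\mathcal M_{r-1}\cup\mathcal I'$ a closure. Equivalently, $\boldsymbol x^{\mathcal I'}$ is feasible for \eqref{eq:max_ratio_problem}: the arcs leaving $\mathcal I'$ end in $\mathcal M_{r-1}\cup\mathcal I'$, and the converse is the argument above. So the candidates are exactly the feasible points of \eqref{eq:max_ratio_problem}.

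By \cref{theo:lambdar}, and its analogue for the extended indices, $\boldsymbol x^r-\boldsymbol x^{r-1}=\boldsymbol x^{\mathcal I_r}$ attains the maximum ratio and has the largest support among maximizers. Here we must check that largest support implies largest weight. All maximizers share the same ratio $\lambda_r$, so the set of optimal solutions of \eqref{eq:max_ratio_problem} plus $\boldsymbol x^{r-1}$ is closed under $\lor$. This is the argument of \cref{prop:optimal_solutions_lattice}, applied to the function $P-\lambda_rW$, which vanishes on the block additions that attain ratio $\lambda_r$. Therefore a unique maximal maximizer exists, it contains every other maximizer, and since weights are positive it has strictly larger weight than any other. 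Thus $\mathcal I_r$ is the $\succ$-maximum among the candidates, which completes the induction.

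\emph{Main obstacle.} The extended indices $r>k$ are where care is needed. There \cref{theo:lambdar} is not literally stated, and $\boldsymbol x^r$ is defined via \cref{rem:M_r_def} as the largest-support optimum of \eqref{eq:max_ratio_problem}. For this range I would take the remark's construction as the definition, choosing the maximal optimum, and then only the lattice claim needs proof. For that claim, let $\boldsymbol y^1,\boldsymbol y^2$ be maximizers of ratio $\rho$. Then $P-\rho W$ vanishes on both, and is $\le 0$ on $\boldsymbol y^1\land\boldsymbol y^2$ and on $\boldsymbol y^1\lor\boldsymbol y^2$, when these are nonempty and feasible. Inclusion–exclusion gives
\[
(P-\rho W)(\boldsymbol y^1\lor\boldsymbol y^2)+(P-\rho W)(\boldsymbol y^1\land\boldsymbol y^2)=0,
\]
so both terms are zero. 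The case $\boldsymbol y^1\land\boldsymbol y^2=0$ is handled by \cref{obs:disjoint}. Hence $\boldsymbol y^1\lor\boldsymbol y^2$ is also a maximizer.
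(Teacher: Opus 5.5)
Your proof is correct and follows the paper's route. Feasibility holds because each prefix union has indicator vector $\boldsymbol{x}^r\in\mathcal P$, and lexicographic maximality holds because, by \cref{theo:lambdar}, each $\mathcal I_r$ is the largest-support maximizer of \eqref{eq:max_ratio_problem} given $\mathcal M_{r-1}$; you additionally make explicit what the paper's short proof leaves implicit (admissible $r$-th macroitems are exactly the feasible points of \eqref{eq:max_ratio_problem}, closure of the maximizers under $\lor$ turns ``largest support'' into ``$\succ$-greatest'', and the extended indices of \cref{rem:M_r_def} are handled), and your greedy induction does not need the strict decrease $\lambda_r<\lambda_{r-1}$ that the paper invokes.
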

\begin{proof}{Proof.}
We need to show that the sequence $\mathcal S$ is feasible and optimal.
\begin{itemize}
\item The sequence is feasible, since for each $r\in \{1,2,\dots,k\}$, we have that $\boldsymbol{x}^{\cup_{s=1}^r {\mathcal I}_s} =\boldsymbol{x}^r$, and $\boldsymbol{x}^r\in {\mathcal P}$.
\item The sequence is optimal because the ratio of a macroitem $\mathcal I_r$ is equal to the breakpoint $\lambda_r$ as computed in (\ref{eq:deflambdar}) and \cref{rem:M_r_def}, and for all $r\in\{2,3,\dots,k\}$ it holds that
$\lambda_r< \lambda_{r-1}$.
Moreover, for every $r\in\{1,2,\dots,k\}$,
 ${\mathcal I}_r$ is equal to the set with indicator vector $\boldsymbol{x}^r-\boldsymbol{x}^{r-1}$, i.e., the binary vector corresponding to the optimal solution of  (\ref{eq:max_ratio_problem}) with largest support, as stated in \cref{theo:lambdar,rem:M_r_def}.
\end{itemize}
 \end{proof}

\begin{figure}
\centering
\caption{The optimal sequence of macroitems $(\mathcal I_1,\mathcal I_2,\mathcal I_3)$. In this example, the optimal solution of $\FLPCKP$, with capacity $c=4$, assigns $x_i=1$ if $i\in\mathcal I_1$, $x_i=\frac 12$ if $i\in\mathcal I_2$, and $x_i=0$ if $i\in\mathcal I_3$.}
\label{fig:optimal_sequence}
\vspace{5mm}
   \begin{tikzpicture}
   \node[shape=circle,draw=black,fill=yellowback,minimum size=1.2cm] (v1) at (-4.5,0)  {}; 
   \node[shape=circle,draw=black,fill=yellowback,minimum size=1.2cm] (v2) at (-2.25,0)  {}; 
   \node[shape=circle,draw=black,fill=redback,minimum size=1.2cm] (v3) at (0,0)  {}; 
    \node[shape=circle,draw=black,fill=greenback,minimum size=1.2cm] (v4) at (2.25,0)  {}; 
   \node[shape=circle,draw=black,fill=yellowback,minimum size=1.2cm] (v5) at (-3.325,-2.25)  {}; 
   \node[shape=circle,draw=black,fill=redback,minimum size=1.2cm] (v6) at (-1.125,-2.25)  {}; 
    \node[shape=circle,draw=black,fill=greenback,minimum size=1.2cm] (v7) at (1.125,-2.25)  {}; 
   \node[shape=circle,draw=black,fill=greenback,minimum size=1.2cm] (v8) at (-1.125,-4.5)  {};

   \foreach \v/\p/\w in {v1/1/1,v2/-1/1,v3/-1/1,v4/-1/2,v5/6/2,v6/5/1,v7/2/1,v8/3/1}{
      \draw[line width=.2mm] (\v.north) -- (\v.south);
      \node[font=\small] at ([xshift=-.28cm]\v.center) {$\p$};
      \node[font=\small] at ([xshift=.28cm]\v.center) {$\w$};
   }

  \draw[dashed,thick,rounded corners=1.5cm] (-5.9,0.9) -- (-0.85,0.9) -- (-3.325,-3.95) -- cycle;
  \node at (-3.325,-1) {$\mathcal I_2$};
  \draw[dashed,thick,rounded corners=0.7cm] (-0.3,1.1) -- (0.95,0.5) -- (-0.85,-3.4) -- (-2.1,-2.8) --cycle;
  \node at (-0.9,-1) {$\mathcal I_1$};
  \draw[dashed,thick,rounded corners=0.7cm] (1.95,1) -- (3.2,0.4)  -- (1.75,-2.65) --  (-1.1,-5.5) -- (-2.1,-4.45) -- (0.35,-2.2) --cycle;
  \node at (1.3,-1) {$\mathcal I_3$};

   \draw[->,line width=.5mm,shorten <=1pt,shorten >=1pt] (v5) to   node[above=.05cm, midway] { } (v1);
   \draw[->,line width=.5mm,shorten <=1pt,shorten >=1pt] (v5) to   node[below=.05cm] { } (v2);
   \draw[->,line width=.5mm,shorten <=1pt,shorten >=1pt] (v5) to   node[below=.05cm] { } (v6);

   \draw[->,line width=.5mm,shorten <=1pt,shorten >=1pt] (v6) to   node[below=.05cm] { } (v3);

   \draw[->,line width=.5mm,shorten <=1pt,shorten >=1pt] (v7) to   node[below=.05cm] { } (v3);
   \draw[->,line width=.5mm,shorten <=1pt,shorten >=1pt] (v7) to   node[below=.05cm] { } (v4);
   
   \draw[->,line width=.5mm,shorten <=1pt,shorten >=1pt] (v8) to   node[left=.05cm, midway] { } (v5);
   \draw[->,line width=.5mm,shorten <=1pt,shorten >=1pt] (v8) to   node[below=.05cm] { } (v6);
   \draw[->,line width=.5mm,shorten <=1pt,shorten >=1pt] (v8) to   node[below=.05cm] { } (v7);
\end{tikzpicture}
\end{figure}

For the example of \cref{fig:graph_example}, the optimal sequence of macroitems has three elements $\mathcal I_1$, $\mathcal I_2$ and $\mathcal I_3$ and they are represented in \cref{fig:optimal_sequence}. In particular, $P(\mathcal I_1)=4$, $W(\mathcal I_1)=2$, $P(\mathcal M_1)=4$, and $\lambda_1=\frac{4}{2}$; $P(\mathcal I_2)=6$, $W(\mathcal I_2)=4$, $P(\mathcal M_2)=10$, and $\lambda_2=\frac{6}{4}$; and $P(\mathcal I_3)=4$, $W(\mathcal I_3)=4$, $P(\mathcal M_3)=14$, and $\lambda_3=\frac{4}{4}$.

In \cref{fig:breakpoints_are_ratio_of_macroitems}, we can find the plot of the function $u$ for this example, this time indicating the breakpoints, which are the ratios of the macroitems in the optimal sequence.

\begin{remark}\label{rem:classical_knapsack}
In the case of the classical knapsack problem we have that ${\mathcal G}$ is the graph with empty set of arcs.
Therefore, applying \cref{pr:optimal_macroitems_maximize_ratio} to such an instance we obtain
$$
{\mathcal I}_1=\arg\max_{i\in \mathcal I} \frac{p_i}{w_i},\ \ {\mathcal I}_r=\arg\max_{i\in \mathcal I\setminus \cup_{j=1}^{r-1} {\mathcal I}_j} \frac{p_i}{w_i},\ \ r\in\{2,3,\dots,t\},
$$
where $t$ is the number of {\em distinct} values of the ratios $\frac{p_i}{w_i}$, $i\in \mathcal I$. In particular, if all the ratios have distinct values, we have that
$$
{\mathcal I}_j=\{i_j\},\ j\in\{1,2,\dots,n\},
$$
where $i_1,i_2\ldots,i_n$ are such that:
$$
\frac{p_{i_1}}{w_{i_1}}>\frac{p_{i_2}}{w_{i_2}}>\cdots>\frac{p_{i_{n-1}}}{w_{i_{n-1}}}>\frac{p_{i_n}}{w_{i_n}},
$$
which is the usual ordering leading to the definition of the optimal solution of the linear relaxation of the classical knapsack problem. 
\end{remark}

\begin{figure}
\centering
\caption{Representation of the breakpoints of the function $u$ for the example in \cref{fig:graph_example}, which are the profit/weight ratios of the macroitems in the optimal sequence represented in \cref{fig:optimal_sequence}.}
\label{fig:breakpoints_are_ratio_of_macroitems}
\vspace{5mm}
\begin{tikzpicture}
\begin{axis}[
    xmin=0, xmax=3,
     xtick={0,1,1.5,2},
     xticklabels={0,$\frac{4}{4}$,$\frac{6}{4}$,$\frac{4}{2}$},
    ytick={0,4,10,14},
]

\addplot[
    color=black,
   ultra thick
    ]
    coordinates {
    (0,14)(1,4)
    };
   \addplot[
    color=black,
   ultra thick
    ]
    coordinates {
    (1,4)(1.5,1)
    };
    \addplot[
    color=black,
   ultra thick
    ]
    coordinates {
    (1.5,1)(2,0)
    };
     \addplot[
    color=black,
   ultra thick
    ]
    coordinates {
    (2,0)(3,0)
    };
\addplot[
    color=black,
      dashed
    ]
    coordinates {
    (1,4)(1.4,0)
    };
\addplot[
    color=black,
   dashed
    ]
    coordinates {
    (0,10)(1,4)
    };
    \addplot[
    color=black,
   dashed
    ]
    coordinates {
    (1.5,1)(10/6,0)
    };
\addplot[
    color=black,
   dashed
    ]
    coordinates {
    (0,4)(1.5,1)
    };
\addplot[
    color=black,
   dashed
    ]
    coordinates {
    (0,0)(2.2,0)
    };
\addplot[
    color=OliveGreen,
   dashed,ultra thick
    ]
    coordinates {
    (1,0)(1,4)
    };

    \addplot[
    color=YellowOrange,
   dashed,ultra thick
    ]
    coordinates {
    (1.5,0)(1.5,1)
    };

    \addplot[
    only marks,
    color=OliveGreen,
    mark=*,
    mark size=2.5pt]
coordinates
{(1,4)};

 \addplot[
    only marks,
    color=YellowOrange,
    mark=*,
    mark size=2.5pt]
coordinates
{(1.5,1)};
\addplot[
    only marks,
    color=BrickRed,
    mark=*,
    mark size=2.5pt]
coordinates
{(2,0)};

\end{axis}
 \node  at (2,4)  {$u(\lambda)$};
\end{tikzpicture}
\end{figure}

\begin{remark}\label{rem:ratio_problem_first_macroitem}
The optimal sequence of macroitems also provides an optimal solution of the ratio optimization problem \eqref{eq:ratio_problem} introduced in \cref{sec:ratio_problem}. Indeed, its optimal value is the ratio of the first macroitem $\mathcal I_1$ of the optimal sequence, and the incidence vector of $\mathcal I_1$ is an optimal solution of \eqref{eq:ratio_problem}: this is exactly the first iteration of the construction in \cref{theo:lambdar}. The corresponding minimization version of \eqref{eq:ratio_problem} is obtained analogously, using the dual variant of the construction presented in \cref{sec:dual_alg}.
\end{remark}

\section{Optimal Solutions of $\FLPCKP$}\label{sec:lpckp_results}

In this section, we will show that an optimal solution for $\FLPCKP$ can be constructed as in the continuous relaxation of the classical knapsack problem, using an optimal sequence of macroitems, instead of just a nonincreasing sequence of simple items. An analogue to the classical knapsack also holds for the dual problem, and for the optimal multiplier in the Lagrangian relaxation of the capacity constraint.
\newline\newline\noindent
For the rest of the discussion we consider optimal sequences of macroitems $\mathcal S=(\mathcal I_1,\mathcal I_2,\dots,\mathcal I_{k(\mathcal S)})$ and we assume that the split macroitem $\mathcal I_{h(\mathcal S)}$ has a positive profit. Indeed, otherwise, all macroitems in the sequence with a positive profit, would fit in the capacity $c$. In that case the optimal solution is easily given by taking all items contained in all macroitems with positive profit.
Moreover, we define the \textit{residual capacity} $\tilde{c}(\mathcal S)$ as
\begin{equation}\label{res_cap}
\tilde {c}(\mathcal S) = c - \sum_{r=1}^{h(\mathcal S)-1} W_{r}.
\end{equation}
The following theorem establishes the optimal solution of $\FLPCKP$ and of the corresponding dual problem $\FDPCKP$.
\begin{theorem}\label{prop:optimal_seq_gives_sol}
Given an optimal sequence of macroitems $\mathcal S:=(\mathcal I_1,\mathcal I_2,\dots,\mathcal I_{k(\mathcal S)})$, 
let us assume that the split macroitem $\mathcal I_{h(\mathcal S)}$ has a positive profit. Then,
an optimal solution of $\FLPCKP$ is:\\
\begin{equation}
\label{eq:PCKP_SOL}
x_i =
\begin{cases}
1, & i \in \mathcal I_r,\quad r\in\{1,2,\dots, h(\mathcal S)-1\},\\[8pt]
\dfrac{\tilde{c}(\mathcal S)}{W_{h(\mathcal S)}}, & i \in \mathcal I_{h(\mathcal S)},\\[8pt]
0, & i \in \mathcal I_r,\quad r\in\{h(\mathcal S)+1,h(\mathcal S)+2,\dots,k(\mathcal S)\}.
\end{cases}
\end{equation}
An optimal solution of the dual $\FDPCKP$ of $\FLPCKP$ is a nonnegative solution given by:\\
\begin{equation}
\label{eq:LPCKP_dual_sol}
\begin{aligned}
\lambda &=\frac{P_{h(\mathcal S)}}{W_{h(\mathcal S)}},\\[8pt]
\mu_i &=w_i\,\left(\frac{P_r}{W_r}-\frac{P_{h(\mathcal S)}}{W_{h(\mathcal S)}}\right), \quad i\in\mathcal I_r,\quad r \in \{1,2,\dots,h(\mathcal S)-1\},\\[8pt]
\mu_i &=0, \quad i\in\mathcal I_r,\quad r \in \{h(\mathcal S),h(\mathcal S)+1,\dots,k(\mathcal S)\},\\[8pt]
\alpha_{ij} &=0, \quad i\in\mathcal I_r,\quad j\in\mathcal I_s,\quad r\ne s,
\end{aligned}
\end{equation}
together with, for each $r\in\{1,2,\dots,k(\mathcal S)\}$, any nonnegative solution of the following linear system
\begin{equation}
\label{eq:LPCKP_dual_sol_alpha}
\sum_{j\in\mathcal I^+(i)}\alpha_{ij}-\sum_{k\in\mathcal I^-(i)}\alpha_{ki} =p_i-w_i\,\frac{P_r}{W_r}, \quad i\in\mathcal I_r.
\end{equation}
\end{theorem}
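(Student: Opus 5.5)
The plan is to verify directly that \eqref{eq:PCKP_SOL} is feasible for $\FLPCKP$, that \eqref{eq:LPCKP_dual_sol}--\eqref{eq:LPCKP_dual_sol_alpha} is feasible for $\FDPCKP$, and that the two objective values coincide; weak duality then gives optimality of both. Throughout, write $\rho_r:=P_r/W_r$. By \cref{rem:opt_sequence_unique_decr} and \cref{pr:optimal_macroitems_maximize_ratio}, $\rho_1>\rho_2>\dots>\rho_{k(\mathcal S)}$ and $\rho_r=\lambda_r$.

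\textbf{Primal feasibility.} By \cref{def:split_macroitem}, $0\le \tilde c(\mathcal S)<W_{h(\mathcal S)}$, so every $x_i$ lies in $[0,1]$. The capacity constraint holds with equality by construction. For an arc $(i,j)$ with $i\in\mathcal I_r$, feasibility of the sequence (\cref{def:feas_seq}) gives $j\in\mathcal I_s$ with $s\le r$. Since the values $x$ are constant on macroitems and nonincreasing along the sequence, $x_i\le x_j$.

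\textbf{Dual objective and easy constraints.} Set $\lambda=\rho_{h(\mathcal S)}$, which is positive because $P_{h(\mathcal S)}>0$. For $r<h(\mathcal S)$ we have $\rho_r>\lambda$, so $\mu_i\ge0$. Inter-macroitem $\alpha$'s are zero, so constraint \eqref{eq:flow_constraint} for $i\in\mathcal I_r$ only involves arcs inside $\mathcal I_r$, and by \eqref{eq:LPCKP_dual_sol_alpha} it becomes
\[
w_i\lambda+\mu_i+p_i-w_i\rho_r\ \ge\ p_i .
\]
For $r<h(\mathcal S)$ this holds with equality. For $r\ge h(\mathcal S)$ it reduces to $\lambda\ge\rho_r$, which is true. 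For the objective,
\[
c\lambda+\sum_{r<h(\mathcal S)}(P_r-W_r\lambda)=\sum_{r<h(\mathcal S)}P_r+\tilde c(\mathcal S)\,\rho_{h(\mathcal S)},
\]
and the right-hand side is exactly the primal value of \eqref{eq:PCKP_SOL}.

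\textbf{Main step: existence of a nonnegative solution of \eqref{eq:LPCKP_dual_sol_alpha}.} This is where I expect the real work. Fix $r$ and view \eqref{eq:LPCKP_dual_sol_alpha} as a transshipment problem on the subgraph induced by $\mathcal I_r$, with net outflow $b_i:=p_i-w_i\rho_r$ at node $i$ and uncapacitated arcs. The supplies are balanced, since $\sum_{i\in\mathcal I_r}b_i=P_r-\rho_rW_r=0$. By Gale's feasibility theorem (equivalently, max-flow/min-cut on the auxiliary source/sink network), a nonnegative flow exists if and only if $\sum_{i\in U}b_i\le0$ for every $U\subseteq\mathcal I_r$ with no arc of $\mathcal I_r$ leaving $U$. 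Such a set $U$ is one where $i\in U$ and $(i,j)\in\mathcal A$ with $j\in\mathcal I_r$ force $j\in U$.

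To verify this condition, take such a $U$, which we may assume nonempty. Then $\boldsymbol x^{U}$ is feasible for problem \eqref{eq:max_ratio_problem}: out-neighbours of $U$ lie either in $U$ or in earlier macroitems, that is, in $\mathcal M_{r-1}$. By \cref{theo:lambdar} and \cref{rem:M_r_def}, the optimal value of that problem is $\lambda_r=\rho_r$. Hence $P(U)/W(U)\le\rho_r$, i.e.\ $\sum_{i\in U}b_i=P(U)-\rho_rW(U)\le0$. This gives the required flow.

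The only delicate point is getting the orientation of Gale's cut condition right: arcs $(i,j)$ carry flow from $i$ to its predecessor $j$. I would double-check it with the explicit min-cut construction, using a source attached to nodes with $b_i>0$ and a sink attached to nodes with $b_i<0$. Once the flow is in hand, weak duality between $\FLPCKP$ and $\FDPCKP$ concludes the proof.
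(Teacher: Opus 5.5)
Your proposal is correct and follows the paper's proof. Both use weak duality with matching objective values, then a per-macroitem uncapacitated transshipment argument via Gale's feasibility theorem, with the same cut condition $\sum_{i\in U}(p_i-w_iP_r/W_r)\le 0$ for every $U\subseteq\mathcal I_r$ that no arc of $\mathcal I_r$ leaves. The only differences are that you verify this condition through \cref{theo:lambdar} and \cref{pr:optimal_macroitems_maximize_ratio} (such a $U$ is feasible for \eqref{eq:max_ratio_problem}, whose optimal value is $\lambda_r=P_r/W_r$), whereas the paper argues directly from the lexicographic maximality in \cref{def:optimal_seq} by placing $U$ ahead of $\mathcal I_r$, and that you check primal feasibility explicitly where the paper only asserts it.
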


\begin{remark}\label{rem:mu_classical_knapsack}
Notice that in \eqref{eq:LPCKP_dual_sol} the values of the $\mu_i$ dual variables  for every $i\in\mathcal I_r$ with $r\in\{1,2,\dots,h(\mathcal S)-1\}$ are analogous to the corresponding optimal values of the dual solutions in the classical knapsack problem.
Indeed, in the classical knapsack problem, if we assume that all the items $i$ have different $\frac{p_i}{w_i}$ ratio, the optimal sequence of macroitems is just the sequence of singletons containing one item, ordered by decreasing ratio (see \cref{rem:classical_knapsack}), and we have a split item $h$, depending on the capacity of the knapsack.
For the classical problem, the values of the dual variables associated to the relaxed primal constraints $x_i\le 1$, for the items $i$ with $\frac{p_i}{w_i}>\frac{p_h}{w_h}$ are $\mu_i=p_i-w_i\,\frac{p_h}{w_h}$.
In the general PCKP setting, where we have precedences, the profit and weight of the item $i$ become the profit and weight of the macroitem $\mathcal I_r$ containing $i$, the ratio of the split item becomes the ratio of the split macroitem $\mathcal I_{h(\mathcal S)}$.
Then, the value $P_r-W_r\, \frac{P_{h(\mathcal S)}}{W_{h(\mathcal S)}}$, which is associated to macroitem $\mathcal I_r$ has to be split among the items $i\in\mathcal I_r$, and this is done by weighing the quantity proportionally to the weight of the item within the macroitem. Hence we have
\[
\mu_i=\left(P_r-W_r\,\frac{P_{h(\mathcal S)}}{W_{h(\mathcal S)}}\right)\,\frac{w_i}{W_r}=w_i\,\left(\frac{P_r}{W_r}-\frac{P_{h(\mathcal S)}}{W_{h(\mathcal S)}}\right)
\]
which is positive since $\mathcal I_r$ precedes $\mathcal I_{h(\mathcal S)}$ in the optimal sequence of macroitems.
\end{remark}
\begin{proof}{Proof.}
Vector $\boldsymbol{x}$ as defined in (\ref{eq:PCKP_SOL}) is a feasible solution of $\FLPCKP$ with primal objective function value given in (\ref{eq:PCKP_value}).
The dual objective function value at a dual solution fulfilling \eqref{eq:LPCKP_dual_sol} is equal to:
$$
\begin{aligned}
c \, \frac{P_{h(\mathcal S)}}{W_{h(\mathcal S)}}+\sum_{r=1}^{h(\mathcal S)-1}\left(P_r-W_r\,\frac{P_{h(\mathcal S)}}{W_{h(\mathcal S)}}\right)
=\sum_{r=1}^{h(\mathcal S)-1}P_r+\frac{P_{h(\mathcal S)}}{W_{h(\mathcal S)}}\,\left(c-\sum_{r=1}^{h(\mathcal S)-1}W_r\right)=\sum_{r = 1}^{h(\mathcal S)-1} P_{r} + P_{h(\mathcal S)} \,\frac{\tilde{c}(\mathcal S)}{W_{h(\mathcal S)}},
\end{aligned}
$$
which is equal to the primal objective function value at vector $\boldsymbol{x}$ defined in (\ref{eq:PCKP_SOL}) .
Therefore, if we are able to prove that a nonnegative solution of (\ref{eq:LPCKP_dual_sol}) exists and it is feasible for the dual problem $\FDPCKP$, we can conclude that the primal solution (\ref{eq:PCKP_SOL}) is optimal for the primal problem, and the dual solution obtained through a nonnegative solution of (\ref{eq:LPCKP_dual_sol}) is optimal for the dual problem. 
\newline\newline\noindent
Thus, we need to check that a nonnegative solution of (\ref{eq:LPCKP_dual_sol}) fulfills the constraints of \eqref{eq:LPCKP_dual}. For every $i\in\mathcal I_r$ with $r\in\{1,2,\dots,h(\mathcal S)-1\}$, we note that
$$
w_i\,\lambda+\mu_i+\sum_{j\in\mathcal I^+(i)}\alpha_{ij}-\sum_{k\in\mathcal I^-(i)}\alpha_{ki}=w_i\,\frac{P_{h(\mathcal S)}}{W_{h(\mathcal S)}}+w_i\,\left(\frac{P_r}{W_r}-\frac{P_{h(\mathcal S)}}{W_{h(\mathcal S)}}\right)+p_i-\frac{P_r}{W_r}\,w_i=p_i.
$$
For every $i\in\mathcal I_r$ with $r\in\{h(\mathcal S),h(\mathcal S)+1,\dots,k(\mathcal S)\}$, we note that
\begin{align*}
 w_i\,\lambda+\mu_i+\sum_{j\in\mathcal I^+(i)}\alpha_{ij}-\sum_{k\in\mathcal I^-(i)}\alpha_{ki}= w_i\,\frac{P_{h(\mathcal S)}}{W_{h(\mathcal S)}}+0+p_i-w_i\,\frac{P_r}{W_r}\ge p_i,
\end{align*}
where the last inequality holds since $\frac{P_{h(\mathcal S)}}{W_{h(\mathcal S)}}\ge\frac{P_r}{W_r}$, because $r\ge h(\mathcal S)$ and the optimal sequence of macroitems $(\mathcal I_1,\mathcal I_2,\dots,\mathcal I_{k(\mathcal S)})$ is decreasing.
\newline\newline\noindent
Then, it remains to prove that  for every $r\in\{1,2,\dots,k(\mathcal S)\}$, there exists a nonnegative vector $\alpha$ such that $\alpha_{ij}=0$ if $i$ and $j$ belong to different macroitems in the optimal sequence, and
\begin{equation}\label{eq:macroitem_divergence}
\sum_{j\in\mathcal I^+(i)\cap\mathcal I_r}\alpha_{ij}-\sum_{k\in\mathcal I^-(i)\cap\mathcal I_r}\alpha_{ki} =p_i-w_i\,\frac{P_r}{W_r},\qquad i\in\mathcal I_r.
\end{equation}
To see this, we introduce the subgraph ${\mathcal G}({\mathcal I}_r)=({\mathcal I}_r,{\mathcal A}({{\mathcal I}_r}))$ induced by ${\mathcal I}_r$, where
$$
{\mathcal A}({\mathcal I}_r)=\{(i,j)\in {\mathcal A}\ :\ i,j\in {\mathcal I}_r\},
$$
and for each $i\in {\mathcal I}_r$ we set
$$
b_i=p_i-w_i\,\frac{P_r}{W_r}.
$$
Then, a nonnegative vector $\alpha$ fulfilling  \eqref{eq:macroitem_divergence} is a feasible flow for the min-cost flow problem over graph ${\mathcal G}({\mathcal I}_r)$
where all arcs have infinite capacities, and the items $i\in {\mathcal I}_r$ are subdivided into supply items with supply $b_i>0$, demand items with demand $-b_i>0$, and transit items with $b_i=0$.
Existence of such a feasible flow is proved as follows.
We have that:
\begin{itemize}
\item the total demand equals the total supply, i.e.,
$$\displaystyle\sum_{i\in\mathcal I_r} b_i
=\sum_{i\in\mathcal I_r}\left(p_i-w_i\,\frac{P_r}{W_r}\right)=P_r-W_r\,\frac{P_r}{W_r}=0.$$
\item For every proper subset $\mathcal M$ of $\mathcal I_r$ such that
\begin{equation}\label{eq:M_wants_to_send}
\displaystyle\sum_{i\in \mathcal M} b_i
>0,    
\end{equation}
we have $\mathcal A^+_r(\mathcal M):=\{(i,j)\in\mathcal A\colon i,j\in\mathcal I_r,\text{ }i\in\mathcal M,\text{ }j\notin \mathcal M\}\ne\emptyset$.
Indeed, 
the left-hand side of \eqref{eq:M_wants_to_send} is equal to 
$$
\sum_{i\in\mathcal M}\left(p_i-w_i\,\frac{P_r}{W_r}\right)=P(\mathcal M)-W(\mathcal M)\,\frac{P_r}{W_r},
$$
where $P(\mathcal M):=\sum_{i\in\mathcal M}p_i$ and $W(\mathcal M):=\sum_{i\in\mathcal M}w_i$.
Hence, inequality \eqref{eq:M_wants_to_send} is equivalent to $\frac{P(\mathcal M)}{W(\mathcal M)}>\frac{P_r}{W_r}$. Now, if we assume by contradiction that there is no arc exiting from $\mathcal M$ within $\mathcal I_r$ (i.e., $\mathcal A^+_r(\mathcal M)=\emptyset$), then $\mathcal M$ alone can be put in place of $\mathcal I_r$ in any feasible sequence of macroitems containing $\mathcal I_r$, while maintaining the feasibility of the sequence and advancing its position in the lexicographic order defined in \cref{def:optimal_seq}, thus contradicting the fact that $\mathcal S=(\mathcal I_1,\mathcal I_2,\dots,\mathcal I_{k(\mathcal S)})$ is an optimal sequence of macroitems.
\end{itemize}
These two facts allow us to apply classic theorems on the existence of feasible flows on networks, to show that a flow within ${\mathcal G}(\mathcal I_r)$ satisfying \eqref{eq:macroitem_divergence} exists (see, for example, \cite[Theorem 6.12]{AMO93}).

\end{proof}

It follows from \cref{prop:optimal_seq_gives_sol} that the optimal value of $\FLPCKP$ and $\FDPCKP$ is
\begin{equation}
\label{eq:PCKP_value}
\sum_{r = 1}^{h(\mathcal S)-1} P_{r} + P_{h(\mathcal S)} \,\frac{\tilde{c}(\mathcal S)}{W_{h(\mathcal S)}}.
\end{equation}

As an illustration of the previous result, we refer to \cref{fig:optimal_sequence} for the optimal solution of the $\FLPCKP$ arising from the graph in \cref{fig:graph_example}, while in \cref{fig:dual} we represent the flow inside the macroitems, which gives the optimal solution of the dual problem $\FDPCKP$, for the same example.  
\begin{figure}
\centering
\caption{Representation of the flow giving the optimal solution of $\FDPCKP$, with capacity $c=4$, for the example of \cref{fig:graph_example}. The outflow-inflow balance of an item, prescribed by \eqref{eq:LPCKP_dual_sol}, is written inside the relative item, while the flow components are written along the corresponding arc.}\label{fig:dual}
\vspace{5mm}
\begin{tikzpicture}
   \node[shape=circle,draw=black,fill=yellowback,minimum size=1.2cm] (v1) at (-4.5,0)  {$-\frac 12$}; 
   \node[shape=circle,draw=black,fill=yellowback,minimum size=1.2cm] (v2) at (-2.25,0)  {$-\frac 52$}; 
   \node[shape=circle,draw=black,fill=redback,minimum size=1.2cm] (v3) at (0,0)  {$-3$}; 
    \node[shape=circle,draw=black,fill=greenback,minimum size=1.2cm] (v4) at (2.25,0)  {$-3$}; 
   \node[shape=circle,draw=black,fill=yellowback,minimum size=1.2cm] (v5) at (-3.325,-2.25)  {$3$}; 
   \node[shape=circle,draw=black,fill=redback,minimum size=1.2cm] (v6) at (-1.125,-2.25)  {$3$}; 
    \node[shape=circle,draw=black,fill=greenback,minimum size=1.2cm] (v7) at (1.125,-2.25)  {$1$}; 
   \node[shape=circle,draw=black,fill=greenback,minimum size=1.2cm] (v8) at (-1.125,-4.5)  {$2$};

   \draw[->,line width=.5mm,shorten <=1pt,shorten >=1pt] (v5) to   node[above=.05cm, midway] { } (v1);
   \draw[->,line width=.5mm,shorten <=1pt,shorten >=1pt] (v5) to   node[below=.05cm] { } (v2);
   \draw[->,line width=.5mm,shorten <=1pt,shorten >=1pt] (v5) to   node[below=.05cm] { } (v6);

   \draw[->,line width=.5mm,shorten <=1pt,shorten >=1pt] (v6) to   node[below=.05cm] { } (v3);

   \draw[->,line width=.5mm,shorten <=1pt,shorten >=1pt] (v7) to   node[below=.05cm] { } (v3);
   \draw[->,line width=.5mm,shorten <=1pt,shorten >=1pt] (v7) to   node[below=.05cm] { } (v4);
   
   \draw[->,line width=.5mm,shorten <=1pt,shorten >=1pt] (v8) to   node[left=.05cm, midway] { } (v5);
   \draw[->,line width=.5mm,shorten <=1pt,shorten >=1pt] (v8) to   node[below=.05cm] { } (v6);
   \draw[->,line width=.5mm,shorten <=1pt,shorten >=1pt] (v8) to   node[below=.05cm] { } (v7);
  \draw[dashed,thick,rounded corners=1.5cm] (-5.7,0.7) -- (-1.05,0.7) -- (-3.325,-3.7) -- cycle;
\node at (-4.2,-1.1) {$\frac 12$};
\node at (-3,-1.1) {$\frac 52$};
  \draw[dashed,thick,rounded corners=0.7cm] (-0.3,1) -- (0.95,0.4) -- (-0.85,-3.3) -- (-2.1,-2.7) --cycle;
  \node at (-0.8,-1.1) {$3$};
   \node at (0.45,-1.3) {$0$};
  \draw[dashed,thick,rounded corners=0.7cm] (1.95,1) -- (3.2,0.4)  -- (1.75,-2.65) --  (-1.1,-5.5) -- (-2.1,-4.45) -- (0.35,-2.2) --cycle;
  \node at (1.5,-1.1) {$3$};

  \node at (-2.25,-2.0) {$0$}; 

     \node at (-2.55,-3.3) {$0$}; 
    \node at (-1.3,-3.3) {$0$}; 
  \node at (-0.25,-3.3) {$2$};
\end{tikzpicture}
\end{figure}
$\ $\newline\newline\noindent
The final result concerns the value of the optimal multiplier for the Lagrangian relaxation of the capacity constraint, i.e., the optimal solution of the dual Lagrangian problem. This is the solution of 
\begin{equation}
\label{eq:opt_multiplier_prob}
\min_{\lambda\ge 0}\left(c\,\lambda+\max_{x \in {\mathcal P}} \sum_{i\in \mathcal I} (p_i-\lambda\, w_i)x_i\right)=\min_{\lambda\ge 0} c\,\lambda+u(\lambda).
\end{equation}

We denote the optimization problem \eqref{eq:opt_multiplier_prob} by $\FLG$ in the remainder of this manuscript. Moreover, we denote by $\zeta(\FLG)$ its optimal value.

We are able to prove the following result, analogous to the one for the classical knapsack problem:
\begin{theorem}\label{prop:Lagrangian_relaxation}
Let $\mathcal S=(\mathcal I_1,\mathcal I_2,\dots,\mathcal I_k)$ be the optimal sequence of macroitems, and assume that the split macroitem $\mathcal I_h$ for this sequence has positive profit.
Then, the optimal solution of $\FLG$ is $\lambda_h:=\frac{P_h}{W_h}$ and its optimal value is equal to the optimal value of $\FLPCKP$, i.e., $\zeta(\FLG)=\zeta(\FLPCKP)$.
\end{theorem}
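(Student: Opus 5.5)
The plan is to exploit the structure of $u$ established in \cref{theo:lambdar} and \cref{pr:optimal_macroitems_maximize_ratio}. The function $g(\lambda):=c\,\lambda+u(\lambda)$ is convex and piecewise affine on $[0,+\infty)$, so it suffices to exhibit a point $\lambda_h$ where $0$ belongs to the subdifferential. Then we evaluate $g(\lambda_h)$ and compare it with \eqref{eq:PCKP_value}.

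First I identify the affine pieces. By \cref{pr:optimal_macroitems_maximize_ratio} the ratios $P_r/W_r$ are exactly the breakpoints $\lambda_r$. On $(\lambda_{r+1},\lambda_r)$ the unique maximizer of \eqref{eq:UPIT_parametric} is $\boldsymbol{x}^r$, the indicator of $\mathcal M_r=\bigcup_{s\le r}\mathcal I_s$. Hence $u$ has slope $-W(\mathcal M_r)=-\sum_{s\le r}W_s$ there. Since the split macroitem has positive profit, $\lambda_h=P_h/W_h>0$ is a genuine breakpoint of $u$ in $(0,+\infty)$, with $h\le k$ in the non-extended sense. Therefore the left and right derivatives of $g$ at $\lambda_h$ are
\[
g'_-(\lambda_h)=c-\sum_{s=1}^{h}W_s<0,\qquad g'_+(\lambda_h)=c-\sum_{s=1}^{h-1}W_s\ge 0,
\]
by \cref{def:split_macroitem}. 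By convexity, $\lambda_h$ minimizes $g$ over $\lambda\ge 0$. I would add a short remark on the endpoint: when $h=1$, the right piece is $u\equiv 0$ with slope $0$, and the same formula applies.

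Second, I compute the value. At $\lambda_h$ we have $u(\lambda_h)=f_{\boldsymbol{x}^{h-1}}(\lambda_h)=\sum_{s<h}P_s-\lambda_h\sum_{s<h}W_s$, so
\[
g(\lambda_h)=\sum_{s=1}^{h-1}P_s+\frac{P_h}{W_h}\Bigl(c-\sum_{s=1}^{h-1}W_s\Bigr)=\sum_{s=1}^{h-1}P_s+P_h\,\frac{\tilde c(\mathcal S)}{W_h}.
\]
This coincides with \eqref{eq:PCKP_value}, which by \cref{prop:optimal_seq_gives_sol} equals $\zeta(\FLPCKP)$. As an alternative to the subdifferential argument, one can note that $\zeta(\FLG)\ge\zeta(\FLPCKP)$ always holds by weak duality. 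This is because $u(\lambda)$ is also the LP value, the closure polytope being integral, or more simply because $\lambda$ together with the induced $\mu,\alpha$ is dual feasible. The value of $\lambda_h$ then gives equality, and hence optimality.

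The main obstacle is the careful justification that the slopes of $u$ adjacent to $\lambda_h$ are exactly $-W(\mathcal M_h)$ and $-W(\mathcal M_{h-1})$. This relies on the uniqueness of $\boldsymbol{x}^r$ on the open intervals, together with the correspondence $\mathcal M_r=\bigcup_{s\le r}\mathcal I_s$. Both hold only for indices of positive ratio, which is precisely where the positive-profit hypothesis on $\mathcal I_h$ is used.
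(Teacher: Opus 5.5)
Your proposal is correct and follows essentially the same route as the paper: you show that $0$ lies in the subdifferential $[c-W(\mathcal M_h),\,c-W(\mathcal M_{h-1})]$ of $g(\lambda)=c\,\lambda+u(\lambda)$ at $\lambda_h$ via the split-macroitem inequalities, and then evaluate $g(\lambda_h)$ to recover \eqref{eq:PCKP_value}. Your weak-duality remark is a valid extra check, and, like the paper, you establish that $\lambda_h$ is \emph{a} minimizer, which is all that can be claimed: when $c=W(\mathcal M_{h-1})$, the function $g$ is flat on $[\lambda_h,\lambda_{h-1}]$.
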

\begin{proof}{Proof.}
Given the optimal sequence of macroitems $(\mathcal I_1,\mathcal I_2,\dots,\mathcal I_k)$, the function $u(\lambda):=\max\sum_{i\in\mathcal I}(p_i-\lambda\, w_i)x_i$ can be written, for each $r\in\{0,1,\dots,k\}$ and each $\lambda\in[\lambda_{r+1},\lambda_r)$, in the form
$$
u(\lambda)=P(\mathcal M_r)-\lambda\, W(\mathcal M_r),
$$
where $\mathcal M_0:=\emptyset$, $\lambda_0:=+\infty$ and $\lambda_{k+1}:=0$. 
The subgradient of the objective function $g(\lambda)=c\,\lambda+u(\lambda)$ of \eqref{eq:opt_multiplier_prob} evaluated at breakpoint $\lambda_r$ is the interval:
$$
\frac{\partial\ g(\lambda_r)}{\partial\ \lambda}=\left[c-W(\mathcal M_{r}),c-W(\mathcal M_{r-1})\right].
$$
Then, by definition of the split macroitem, it holds that
$$
0\in \left[c-W(\mathcal M_{h}),c-W(\mathcal M_{h-1})\right]= \frac{\partial\ g(\lambda_h)}{\partial\ \lambda},
$$
i.e., $\lambda_h$ fulfills the optimality condition of the convex nonsmooth optimization problem \eqref{eq:opt_multiplier_prob}. 
\newline\newline\noindent
The optimal value of the dual Lagrangian problem is:
$$
\zeta(\FLG)=g(\lambda_h)=c\,\lambda_h+u(\lambda_h)=c\,\frac{P_h}{W_h}+P({\mathcal M}_h)-\frac{P_h}{W_h}\,W({\mathcal M}_h)=\sum_{r=1}^{h-1} P_r+P_h\,\frac{c-\sum_{r=1}^{h-1} W_r}{W_h},
$$
which is equal to the optimal value of $\FLPCKP$, i.e.,
$\zeta(\FLG)=\zeta(\FLPCKP)$.
 \end{proof}
The plot of the objective function of $\FLG$ for the example of \cref{fig:graph_example} with capacity $c=4$ is represented in \cref{fig:opt_multiplier_plot}. Note that since in this case $h=2$, the minimum of the function $c\,\lambda+u(\lambda)$ is attained at $\lambda_2=\frac 64$, which is the ratio of the split macroitem $\mathcal I_2$.
\begin{figure}
\centering
\caption{Plot of the function $c\,\lambda+u(\lambda)$ for the example of \cref{fig:graph_example} with capacity $c=4$. The minimum is achieved at the breakpoint giving the profit/weight ratio of the split macroitem in the optimal sequence.}
\label{fig:opt_multiplier_plot}
\vspace{5mm}
\begin{tikzpicture}
\begin{axis}[
    xmin=0, xmax=3,
    ymin=0, ymax=15,
     xtick={0,1,1.5,2},
     xticklabels={0,$\frac{4}{4}$,$\frac{6}{4}$,$\frac{4}{2}$},
    ytick={0,4,10,14},
]

\addplot[
    color=black,
   ultra thick
    ]
    coordinates {
    (0,14)(1,8)
    };
   \addplot[
    color=black,
   ultra thick
    ]
    coordinates {
    (1,8)(1.5,7)
    };
    \addplot[
    color=black,
   ultra thick
    ]
    coordinates {
    (1.5,7)(2,8)
    };
     \addplot[
    color=black,
   ultra thick
    ]
    coordinates {
    (2,8)(3,12)
    };
\addplot[
    color=black,
      dashed
    ]
    coordinates {
    (1,8)(14/6,0)
    };
\addplot[
    color=black,
   dashed
    ]
    coordinates {
    (0,10)(1,8)
    };
    \addplot[
    color=black,
   dashed
    ]
    coordinates {
    (1.5,7)(5,0)
    };
\addplot[
    color=black,
   dashed
    ]
    coordinates {
    (0,4)(1.5,7)
    };

    \addplot[
    color=black,
   dashed
    ]
    coordinates {
    (1.5,7)(3,10)
    };

    \addplot[
    color=black,
   dashed
    ]
    coordinates {
    (0,0)(2,8)
    };
\addplot[
    color=OliveGreen,
   dashed,ultra thick
    ]
    coordinates {
    (1,0)(1,8)
    };

    \addplot[
    color=YellowOrange,
   dashed,ultra thick
    ]
    coordinates {
    (1.5,0)(1.5,7)
    };

     \addplot[
    color=BrickRed,
   dashed,ultra thick
    ]
    coordinates {
    (2,0)(2,8)
    };

    \addplot[
    only marks,
    color=OliveGreen,
    mark=*,
    mark size=2.5pt]
coordinates
{(1,8)};

 \addplot[
    only marks,
    color=YellowOrange,
    mark=*,
    mark size=2.5pt]
coordinates
{(1.5,7)};
\addplot[
    only marks,
    color=BrickRed,
    mark=*,
    mark size=2.5pt]
coordinates
{(2,8)};

\end{axis}
 \node  at (2.4,4)  {$c\,\lambda+u(\lambda)$}; 
\end{tikzpicture}
\end{figure}

\section{Algorithms for computing the optimal sequence of macroitems on directed forests}\label{sec:tree_algorithms}

{In this section, we present new algorithms for computing the optimal sequence of macroitems when the precedence graph $\mathcal{G}$ is a \textit{directed forest}, i.e., a disjoint union of directed trees. We show that, despite the additional combinatorial structure imposed by the precedence constraints, the optimal sequence can be computed with an $O(n^2)$ algorithm, where $n$ is the number of items. We further show that when the arcs are all directed in the same direction (i.e., the forest is composed of \textit{out-trees} or \textit{in-trees}), a close variant of this algorithm reduces the complexity to $O(n \log n)$, matching the time needed to compute the parametric LP relaxation of the classical \gls{kp} for all capacity values, which requires sorting items by efficiency and thus runs in $O(n \log n)$ \citep{KPP04li,MT90}. As already discussed in the Introduction, the optimal sequence of macroitems can also be obtained on arbitrary precedence graphs via parametric flow algorithms in $O(mn \log n)$ time, where $m$ is the number of arcs \citep{HC08}; on forests, where $m = O(n)$, this general-purpose bound becomes $O(n^2 \log n)$, and the algorithms introduced in this section improve on it by exploiting the tree structure directly. A computational comparison between these two approaches is provided in \cref{sec:computational_results}.}

\subsection{Notation}\label{sec:notation_trees}

In order to describe the algorithms of this section, we introduce the following notation.

\begin{definition}\label{def:forward_tree}
Let $\mathcal G=(\mathcal I,\mathcal A)$ be a directed forest and let $j\in\mathcal I$ be an item. The \textit{preceding set} of item $j$ is the set of items which can be reached from $j$ with a directed path, which is the set
\[
F_j=\{k\in \mathcal I\colon \textnormal{there exists a directed path from $j$ to $k$}\}.
\]

Let $a=(i,j)\in\mathcal A$ be an edge. The \textit{minimal preceding set} of arc $a=(i,j)$ is the set of items which are in the preceding set of $i$ but not in the preceding set of $j$, namely the set
\[
F_a:=F_i\setminus F_j.
\]
\end{definition}
\begin{definition}\label{def:backward_tree}
Let $\mathcal G=(\mathcal I,\mathcal A)$ be a directed forest and let $j\in\mathcal I$ be an item. The \textit{succeeding set} of item $j$ is the set of items which can reach $j$ with a directed path, which is the set
\[
B_j=\{k\in \mathcal I\colon \textnormal{there exists a directed path from $k$ to $j$}\}.
\]

Let $a=(i,j)\in\mathcal A$ be an edge. The \textit{minimal succeeding set} of arc $a=(i,j)$ is the set of items which are in the succeeding set of $j$ but not in the succeeding set of $i$, namely the set
\[
B_a:=B_j\setminus B_i.
\]
\end{definition}
\begin{definition}\label{def:final_initial_items}
We indicate with $\mathcal I_\textnormal{f}$ the set of the \textit{final items} of $\mathcal G$, and with $\mathcal I_\textnormal{i}$ the set of its \textit{initial items}, which are defined respectively by
\[
\mathcal I_\textnormal{f}:=\{j\in\mathcal I\colon \mathcal I^+(j)=\emptyset\},\qquad
\mathcal I_\textnormal{i}:=\{j\in\mathcal I\colon \mathcal I^-(j)=\emptyset\}.
\]
\end{definition}
\begin{definition}\label{def:M_plus_minus}
Let $\mathcal G=(\mathcal I,\mathcal A)$ be a directed forest and let $\mathcal M'\subset\mathcal M\subset \mathcal I$ be a chain of subsets of items. The set of arcs connecting items in $\mathcal M$ is the set
\[
\mathcal A(\mathcal M):=\{(i,j)\in\mathcal A\colon i,j\in\mathcal M\}.
\]
The set of items in $\mathcal M$ which are connected with $\mathcal M'$ by an arc exiting $\mathcal M'$ is the set
\[
\mathcal M^+(\mathcal M'):=\{j\in\mathcal M\setminus\mathcal M'\colon \exists i\in\mathcal M',\, (i,j)\in\mathcal A\},
\]
and the set of items in $\mathcal M$ which are connected with $\mathcal M'$ by an arc entering $\mathcal M'$ is the set
\[
\mathcal M^-(\mathcal M'):=\{i\in\mathcal M\setminus\mathcal M'\colon \exists j\in\mathcal M',\, (i,j)\in\mathcal A\}.
\]
\end{definition}

An illustration of preceding/succeeding sets associated to items and arcs, and of final/initial items, can be found in \cref{fig:F_i_B_j_final_initial_nodes}, while an illustration of the sets $\mathcal M^+(\mathcal M')$ and $\mathcal M^-(\mathcal M')$ is in \cref{fig:set_M_and_Mprime}.


\begin{figure}
\centering
\caption{Representation of a directed tree. The arc $a=(i,j)$ is highlighted in bold. Two dashed lines enclose the sets $F_i$ and $B_j$. The items in $F_a$, and their connecting arcs, are highlighted in orange, while the same is done in green for $B_a$. Final items (i.e., items $v$ with $\mathcal I^+(v)=\emptyset$) are filled with a north-east line pattern, while initial items (i.e., items $v$ with $\mathcal I^-(v)=\emptyset$) are filled with a dot pattern.}
\label{fig:F_i_B_j_final_initial_nodes}
\vspace{5mm}
\begin{tikzpicture}[x=0.82cm,y=1cm,scale=0.8,transform shape]

  \node[shape=circle,draw=black,minimum size=1.2cm,pattern=north east lines,pattern color=black] (n1)  at (-6,  3) {};
  \node[shape=circle,draw=black,minimum size=1.2cm,pattern=north east lines,pattern color=black] (n2)  at (-2,  3) {};
  \node[shape=circle,draw=orange,line width=.5mm,minimum size=1.2cm,pattern=north east lines,pattern color=orange] (n3)  at ( 2,  3) {};
  \node[shape=circle,draw=orange,line width=.5mm,minimum size=1.2cm,pattern=north east lines,pattern color=orange] (n4)  at ( 6,  3) {};

  \node[shape=circle,draw=black,minimum size=1.2cm,pattern=north east lines,pattern color=black] (n5)  at (-6,  0) {};
  \node[shape=circle,draw=darkgreen,line width=.5mm,minimum size=1.2cm]  (n6)  at (-2,  0) {$j$};
  \node[shape=circle,draw=orange,line width=.5mm,minimum size=1.2cm]     (n7)  at ( 2,  0) {};
  \node[shape=circle,draw=orange,line width=.5mm,minimum size=1.2cm,pattern=north east lines,pattern color=orange] (n8)  at ( 6,  0) {};

  \node[shape=circle,draw=darkgreen,line width=.5mm,minimum size=1.2cm]  (n9)  at (-6, -3) {};
  \node[shape=circle,draw=darkgreen,line width=.5mm,minimum size=1.2cm,pattern=dots,pattern color=darkgreen] (n10) at (-2, -3) {};
  \node[shape=circle,draw=orange,line width=.5mm,minimum size=1.2cm]     (n11) at ( 2, -3) {$i$};
  \node[shape=circle,draw=black,minimum size=1.2cm,pattern=dots,pattern color=black] (n12) at ( 6, -3) {};

  \node[shape=circle,draw=darkgreen,line width=.5mm,minimum size=1.2cm,pattern=dots,pattern color=darkgreen] (n13) at (-6, -6) {};
  \node[shape=circle,draw=darkgreen,line width=.5mm,minimum size=1.2cm,pattern=dots,pattern color=darkgreen] (n14) at (-2, -6) {};
  \node[shape=circle,draw=black,minimum size=1.2cm,pattern=dots,pattern color=black] (n15) at ( 2, -6) {};
  \node[shape=circle,draw=black,minimum size=1.2cm,pattern=dots,pattern color=black] (n16) at ( 6, -6) {};

  \draw[->,line width=.5mm,shorten <=1pt,shorten >=1pt] (n9)  to (n5);
  \draw[->,line width=.5mm,shorten <=1pt,shorten >=1pt] (n13) to (n9);
  \draw[->,line width=.5mm,shorten <=1pt,shorten >=1pt] (n14) to (n9);
  \draw[->,line width=.5mm,shorten <=1pt,shorten >=1pt] (n9)  to (n6);

  \draw[->,line width=.5mm,shorten <=1pt,shorten >=1pt] (n10) to (n6);
  \draw[->,line width=.5mm,shorten <=1pt,shorten >=1pt] (n6)  to (n2);
  \draw[->,line width=.5mm,shorten <=1pt,shorten >=1pt] (n6)  to (n1);

  \draw[->,line width=.5mm,shorten <=1pt,shorten >=1pt] (n11) to (n7);
  \draw[->,line width=.5mm,shorten <=1pt,shorten >=1pt] (n7)  to (n3);
  \draw[->,line width=.5mm,shorten <=1pt,shorten >=1pt] (n7)  to (n4);
  \draw[->,line width=.5mm,shorten <=1pt,shorten >=1pt] (n11) to (n8);
  \draw[->,line width=.5mm,shorten <=1pt,shorten >=1pt] (n16) to (n11);
  \draw[->,line width=.5mm,shorten <=1pt,shorten >=1pt] (n15) to (n11);
  \draw[->,line width=.5mm,shorten <=1pt,shorten >=1pt] (n12) to (n8);

  \draw[->,line width=1.2mm,shorten <=1pt,shorten >=1pt] (n11) to node[midway,above,sloped] {$a$} (n6);

  \draw[->,line width=.5mm,shorten <=1pt,shorten >=1pt,darkgreen] (n13) to (n9);
  \draw[->,line width=.5mm,shorten <=1pt,shorten >=1pt,darkgreen] (n14) to (n9);
  \draw[->,line width=.5mm,shorten <=1pt,shorten >=1pt,darkgreen] (n9)  to (n6);
  \draw[->,line width=.5mm,shorten <=1pt,shorten >=1pt,darkgreen] (n10) to (n6);

  \draw[->,line width=.5mm,shorten <=1pt,shorten >=1pt,orange] (n11) to (n7);
  \draw[->,line width=.5mm,shorten <=1pt,shorten >=1pt,orange] (n7)  to (n3);
  \draw[->,line width=.5mm,shorten <=1pt,shorten >=1pt,orange] (n7)  to (n4);
  \draw[->,line width=.5mm,shorten <=1pt,shorten >=1pt,orange] (n11) to (n8);

  \draw[dashed,thick,rounded corners=10pt]
    (-6.95, 3.95) -- ( 6.95, 3.95) -- ( 6.95,-0.95) -- ( 2.95,-3.85) --
    ( 1.05,-3.85) -- (-6.95, 1.85) -- cycle;
  \node at (6.45,4.20) {$F_i$};

  \draw[dashed,thick,rounded corners=10pt]
    (-6.95,-1.95) -- (-2.95, 0.95) -- (-0.95, 0.95) --
    ( 6.95,-4.85) -- ( 6.95,-6.95) -- (-6.95,-6.95) -- cycle;
  \node at (-6.15,-7.20) {$B_j$};

  \node[orange] at (3.9, 0.3) {$F_a$};
  \node[darkgreen] at (-3.8, -3.0) {$B_a$};

\end{tikzpicture}
\end{figure}

\medskip
Notice that a sequence $\mathcal S=\big(\mathcal I_1,\mathcal I_2,\dots,\mathcal I_{k(\mathcal S)}\big)$ of macroitems is feasible if and only if for every item $j\in \mathcal I$, all items in $F_j$ are contained in macroitems of the sequence $\mathcal S$ that precede the macroitem containing $j$, or are contained in the same macroitem. Furthermore, given an arc $a=(i,j)$, item $i$ can be in the same macroitem as item $j$ only if all items in $F_a$ are contained in a macroitem of the sequence $\mathcal S$ that precedes the macroitem containing $j$, or are contained in the same macroitem.


\begin{figure}
\centering
\caption{Directed graph with dashed set $\mathcal M$ and highlighted subset $\mathcal M'$ in purple. Nodes in $\mathcal M^+(\mathcal M')$ are highlighted in teal with a north-west line pattern, and the arcs from $\mathcal M'$ to $\mathcal M^+(\mathcal M')$ are highlighted in teal. Nodes in $\mathcal M^-(\mathcal M')$ are highlighted in brown with a crosshatch pattern, and the arcs from $\mathcal M^-(\mathcal M')$ to $\mathcal M'$ are highlighted in brown.}
\label{fig:set_M_and_Mprime}
\vspace{5mm}
\begin{tikzpicture}[x=0.82cm,y=1cm,scale=0.8,transform shape]

  \node[shape=circle,draw=teal,line width=.5mm,minimum size=1.2cm,pattern=north west lines,pattern color=teal] (n1)  at (-6,  3) {};
  \node[shape=circle,draw=teal,line width=.5mm,minimum size=1.2cm,pattern=north west lines,pattern color=teal] (n2)  at (-2,  3) {};

  \node[shape=circle,draw=black,minimum size=1.2cm]                   (n3)  at ( 2,  3) {};
  \node[shape=circle,draw=black,minimum size=1.2cm]                   (n4)  at ( 6,  3) {};

  \node[shape=circle,draw=purple,line width=.5mm,minimum size=1.2cm]  (n5)  at (-6,  0) {};
  \node[shape=circle,draw=purple,line width=.5mm,minimum size=1.2cm]  (n6)  at (-2,  0) {};
  \node[shape=circle,draw=purple,line width=.5mm,minimum size=1.2cm]  (n7)  at ( 2,  0) {};
  \node[shape=circle,draw=black,minimum size=1.2cm]                   (n8)  at ( 6,  0) {};

  \node[shape=circle,draw=black,minimum size=1.2cm]                   (n9)  at (-6, -3) {};
  \node[shape=circle,draw=purple,line width=.5mm,minimum size=1.2cm]  (n10) at (-2, -3) {};
  \node[shape=circle,draw=purple,line width=.5mm,minimum size=1.2cm]  (n11) at ( 2, -3) {};
  \node[shape=circle,draw=black,minimum size=1.2cm]                   (n12) at ( 6, -3) {};

  \node[shape=circle,draw=black,minimum size=1.2cm]                   (n13) at (-6, -6) {};
  \node[shape=circle,draw=black,minimum size=1.2cm]                   (n14) at (-2, -6) {};
  \node[shape=circle,draw=brown,line width=.5mm,minimum size=1.2cm,pattern=crosshatch,pattern color=brown] (n15) at ( 2, -6) {};
  \node[shape=circle,draw=brown,line width=.5mm,minimum size=1.2cm,pattern=crosshatch,pattern color=brown] (n16) at ( 6, -6) {};

  \draw[->,line width=.5mm,shorten <=1pt,shorten >=1pt] (n9)  to (n5);
  \draw[->,line width=.5mm,shorten <=1pt,shorten >=1pt] (n13) to (n9);
  \draw[->,line width=.5mm,shorten <=1pt,shorten >=1pt] (n14) to (n9);
  \draw[->,line width=.5mm,shorten <=1pt,shorten >=1pt] (n9)  to (n6);

  \draw[->,line width=.5mm,shorten <=1pt,shorten >=1pt] (n10) to (n6);
  \draw[->,line width=.5mm,shorten <=1pt,shorten >=1pt] (n6)  to (n2);
  \draw[->,line width=.5mm,shorten <=1pt,shorten >=1pt] (n6)  to (n1);

  \draw[->,line width=.5mm,shorten <=1pt,shorten >=1pt] (n11) to (n7);
  \draw[->,line width=.5mm,shorten <=1pt,shorten >=1pt] (n7)  to (n3);
  \draw[->,line width=.5mm,shorten <=1pt,shorten >=1pt] (n7)  to (n4);
  \draw[->,line width=.5mm,shorten <=1pt,shorten >=1pt] (n11) to (n8);
  \draw[->,line width=.5mm,shorten <=1pt,shorten >=1pt] (n16) to (n11);
  \draw[->,line width=.5mm,shorten <=1pt,shorten >=1pt] (n15) to (n11);
  \draw[->,line width=.5mm,shorten <=1pt,shorten >=1pt] (n12) to (n8);

  \draw[->,line width=.5mm,shorten <=1pt,shorten >=1pt] (n11) to (n6);

  \draw[dashed,thick,rounded corners=10pt]
    (-7.5, 3.95) -- (-0.95, 3.95) -- (6.95, -1.95) -- (6.95, -6.95) --
    (1.05, -6.95) -- (-7.5, -0.95) -- cycle;
  \node at (6.35,-7.20) {$\mathcal M$};

  \draw[dashed,thick,rounded corners=10pt,purple]
    (-6.95, 0.95) -- (2.95, 0.95) -- (2.95, -3.95) -- (-2.95, -3.95) --
    (-2.95, -0.95) -- (-6.95, -0.95) -- cycle;

  \draw[->,line width=.5mm,shorten <=1pt,shorten >=1pt,purple] (n10) to (n6);
  \draw[->,line width=.5mm,shorten <=1pt,shorten >=1pt,purple] (n11) to (n7);
  \draw[->,line width=.5mm,shorten <=1pt,shorten >=1pt,purple] (n11) to (n6);
  \node[purple] at (0.2,-2.25) {$\mathcal M'$};

  \draw[->,line width=.5mm,shorten <=1pt,shorten >=1pt,teal] (n6) to (n1);
  \draw[->,line width=.5mm,shorten <=1pt,shorten >=1pt,teal] (n6) to (n2);

  \draw[->,line width=.5mm,shorten <=1pt,shorten >=1pt,brown] (n15) to (n11);
  \draw[->,line width=.5mm,shorten <=1pt,shorten >=1pt,brown] (n16) to (n11);

\end{tikzpicture}
\end{figure}


\begin{remark}\label{rem:wing_without_root_indep}
Notice that $\mathcal I^+(F_j)=\{k\in\mathcal I\setminus F_j\colon \exists i\in F_j,\, (i,k)\in\mathcal A\}=\emptyset$ for every $j\in\mathcal I$ by \cref{def:forward_tree}. Furthermore $\mathcal I^+(F_{(i,j)}\setminus\{i\})=\emptyset$ for every $(i,j)\in\mathcal A$ since $F_{(i,j)}\setminus\{i\}$ is the disjoint union of the sets $F_k$ with $k\in\mathcal I^+(i)\setminus \{j\}$ 
\end{remark}

\subsection{Main algorithm to compute the optimal sequence of macroitems on a forest}\label{sec:main_alg}
To show that our main algorithm outputs the optimal sequence of macroitems we prove three propositions.
\begin{proposition}\label{prop:disjoint_union_wings}
Suppose $\mathcal G=(\mathcal I,\mathcal A)$ is a directed forest and let $\mathcal M\subset\mathcal I$ be a subset of items. If $\mathcal I^+(\mathcal M)=\emptyset$, then
\[
\mathcal M=\left(\bigsqcup_{a\in \tilde{\mathcal A}} F_a\right)\sqcup \tilde{\mathcal I}_\textnormal{f}
\]
for some subset $\tilde{\mathcal A}\subset\mathcal A$ and $\tilde {\mathcal I}_\textnormal{f}\subset \mathcal I_\textnormal{f}$, i.e., $\mathcal M$ is the disjoint union of the minimal preceding sets associated to some arcs and some subset of final items.
\end{proposition}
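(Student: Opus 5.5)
The plan is to reduce to a single connected piece of $\mathcal M$ and then read off the decomposition from a rooted version of the underlying undirected tree. Throughout I use the hypothesis $\mathcal I^+(\mathcal M)=\emptyset$ in its equivalent form: no arc leaves $\mathcal M$, hence $F_x\subset\mathcal M$ for every $x\in\mathcal M$.

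\emph{Reduction to components.} Let $C$ be a connected component of the undirected graph underlying $\mathcal G$ restricted to $\mathcal M$. An arc from $C$ to $\mathcal M\setminus C$ would join two components, which is impossible, and there is no arc from $C$ to $\mathcal I\setminus\mathcal M$. Hence $C$ also has no exiting arcs, so $F_x\subset C$ for every $x\in C$. It therefore suffices to decompose each such $C$ and take the union of the resulting families of arcs and final items.

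\emph{Rooting $C$.} By acyclicity and finiteness, following out-arcs from any element of $C$ reaches an item with no out-neighbours. This item lies in $C$ and is a final item $r\in\mathcal I_\textnormal{f}$. I root the undirected tree $C$ at $r$, and for $x\ne r$ I denote by $\pi(x)$ its parent. I call $x\ne r$ a \emph{head} if the edge between $x$ and $\pi(x)$ is the arc $(x,\pi(x))\in\mathcal A$, and set $a_x:=(x,\pi(x))$. The candidate decomposition is
\[
C=\{r\}\sqcup\bigsqcup_{x\text{ head}}F_{a_x},
\]
so I take $\tilde{\mathcal A}$ to consist of the arcs $a_x$ for heads $x$ and $\tilde{\mathcal I}_\textnormal{f}\ni r$. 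The key structural fact is the following. Since $C$ is a tree and $(x,\pi(x))$ is an arc, $F_{a_x}=F_x\setminus F_{\pi(x)}$ is exactly the set of $y$ in the rooted subtree of $x$ such that the tree path from $x$ down to $y$ uses only arcs oriented away from the root (parent $\to$ child). Indeed, a directed path from $x$ cannot go through $\pi(x)$ without entering $F_{\pi(x)}$, and by uniqueness of tree paths everything else reachable from $x$ is reached along downward arcs. This is essentially Remark~\ref{rem:wing_without_root_indep}: $F_a\setminus\{x\}$ is the disjoint union of the $F_k$ for the other out-neighbours $k$ of $x$.

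\emph{Partition argument.} For each $y\in C$, I walk from $y$ up toward $r$ and stop at the first node $z$ (possibly $z=y$) that is either a head or equals $r$. All edges traversed before stopping join a non-head to its parent, so they are oriented parent $\to$ child, i.e.\ they point downward toward $y$. By the key fact, if $z$ is a head then $y\in F_{a_z}$. If $z=r$, then $y$ is reachable from $r$; since $r$ is final, $F_r=\{r\}$ and thus $y=r$. (Equivalently, every child of $r$ is a head.) Conversely, if $y\in F_{a_x}$ then the path from $x$ to $y$ is downward, so the walk from $y$ meets no head strictly before $x$; hence $x$ is precisely the stopping node $z$. This gives both coverage and disjointness. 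Finally, every set involved is contained in $C$ because $C$ is closed.

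The main obstacle I expect is making the key fact rigorous: showing that $F_x\setminus F_{\pi(x)}$ is exactly the downward-reachable part of the subtree of $x$. In particular, I must exclude that some item of $F_{\pi(x)}$ is reachable from $x$ along a route avoiding $\pi(x)$, and here the forest (undirected acyclicity) assumption is essential. I would argue by uniqueness of undirected paths: any directed path from $x$ to an item outside the subtree of $x$ must traverse the edge $\{x,\pi(x)\}$, hence pass through $\pi(x)$.
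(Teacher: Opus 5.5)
Your proof is correct, and it reaches the decomposition by a different route than the paper. The paper argues procedurally. It starts from $\mathcal M'=\tilde{\mathcal I}_\textnormal{f}$, one final item per component of $(\mathcal M,\mathcal A(\mathcal M))$. While $\mathcal M'\neq\mathcal M$, it picks $j\in\mathcal M'$ with an in-neighbour in $\mathcal M\setminus\mathcal M'$ and adds to $\tilde{\mathcal A}$ the arcs of a maximal backward path from $j$ in $\mathcal A(\mathcal M)\setminus\mathcal A(\mathcal M')$. It then asserts that the new sets $F_a$ are disjoint from the previous ones and that $\mathcal I^+(\mathcal M')=\emptyset$ is preserved. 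You instead write the answer in closed form: after rooting each closed component at a final item $r$, $\tilde{\mathcal A}$ is the set of arcs oriented child-to-parent. Coverage and disjointness then both follow from the single map sending $y$ to the first head (or $r$) met on the way up. In fact the two routes yield the same $\tilde{\mathcal A}$ for the same choice of final items. Since the paper's $\mathcal M'$ stays connected and contains the chosen final item, each arc it adds goes from a node to its neighbour toward the root, so it is a head arc in your sense. And since your pieces $F_{a_x}$ are pairwise disjoint with $x\in F_{a_x}$, no head arc can be left out of such a partition. Your version buys rigor exactly where the paper is brief: it reduces disjointness and closure to uniqueness of undirected paths in the tree $C$ together with $F_x\subset C$. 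The paper's version buys an incremental growth picture that matches its illustration of the decomposition and the algorithmic use of the proposition. One small completion for your key fact: besides the inclusion you discuss, you should also note that a downward-reachable $y$ in the subtree of $x$ is not in $F_{\pi(x)}$. This is immediate, because the unique tree path from $\pi(x)$ to $y$ begins with the edge between $\pi(x)$ and $x$, which is oriented as $(x,\pi(x))$.
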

\begin{proof}{Proof.}
We can show this constructively.  We start with $\tilde{\mathcal A}=\emptyset$ and $\tilde{\mathcal I}_\textnormal{f}=\emptyset$.
Then, we add one final item in $\mathcal I_\textnormal{f}\cap \mathcal M$ to $\tilde{\mathcal I}_\textnormal{f}$ for every
connected component of the graph $(\mathcal M,\mathcal A(\mathcal M))$.
Such final items exist since $\mathcal I^+(\mathcal M)=\emptyset$ and $\mathcal G$ is a forest.
Thus we obtain a first set $\mathcal M'\subset \mathcal M$ defined by
\[
\mathcal M'=\tilde{\mathcal I}_\textnormal{f}
\]
and we have $\mathcal I^+(\mathcal M')=\emptyset$.

Now if $\mathcal M^-(\mathcal M')\neq \emptyset$ (i.e., $\mathcal M\neq\mathcal M'$), we consider any item $j\in\mathcal M'$ which is connected to some item in $\mathcal M^-(\mathcal M')$. Then we take any maximal backwards path $\pi_j\subset\mathcal A(\mathcal M)\setminus\mathcal A(\mathcal M')$ from $j$ to some item $v(j)$ with $\mathcal M^-(\{v(j)\})=\emptyset$, and we add the arcs in $\pi_j$ to $\tilde{\mathcal A}$, updating $\mathcal M'$ accordingly.
Since the new elements added to $\tilde{\mathcal A}$ are chosen in a backwards path in $\mathcal A(\mathcal M)\setminus\mathcal A(\mathcal M')$, we have $F_a\cap F_{a'}=\emptyset$, for every $a,a'\in\tilde{\mathcal A}$ with $a\ne a'$. Thus we obtain
\[
\mathcal M'=\left(\bigsqcup_{a\in \tilde{\mathcal A}} F_a\right)\sqcup \tilde{\mathcal I}_\textnormal{f}
\]
and we have enlarged  the set $\mathcal M'$ to also include $\bigcup F_{v(j)}$, where the union is extended to all items $j\in\mathcal M'$ which are connected to some item in $\mathcal M^-(\mathcal M')$. Hence we have again $\mathcal I^+(\mathcal M')=\emptyset$.


Then, we repeat the last step until $\mathcal M'=\mathcal M$.
 \end{proof}

An illustration of the outcome of the constructive procedure described in the proof of \cref{prop:disjoint_union_wings} can be found in \cref{fig:disjoint_union}.

\begin{figure}
\centering
\caption{Constructive decomposition of a set $\mathcal M$ with $\mathcal I^+(\mathcal M)=\emptyset$ according to the proof of Proposition~\ref{prop:disjoint_union_wings}.
We start from one final item per connected component in $\mathcal M$, i.e., $\mathcal M'=\tilde{\mathcal I}_{\mathrm f}=\{1,4,8\}$, and then iteratively add sets $F_a$ associated with arcs $a$ from $\mathcal M^-(\mathcal M')$ to $\mathcal M'$ (shown as dashed colored arcs), until $\mathcal M'=\mathcal M$. The resulting disjoint union is
$\mathcal M=\{1,4,8\}\sqcup F_{(6,1)}\sqcup F_{(7,4)}\sqcup F_{(12,8)}\sqcup F_{(9,6)}\sqcup F_{(10,6)}$.}
\label{fig:disjoint_union}
\vspace{5mm}
\begin{tikzpicture}[x=0.82cm,y=1cm,scale=0.8,transform shape]

  \node[shape=circle,draw=teal,line width=.5mm,minimum size=1.2cm]                      (n1)  at (-6,  3) {1};
  \node[shape=circle,draw=orange,line width=.5mm,minimum size=1.2cm]                     (n2)  at (-2,  3) {2};
  \node[shape=circle,draw=purple,line width=.5mm,minimum size=1.2cm]                     (n3)  at ( 2,  3) {3};
  \node[shape=circle,draw=blue,line width=.5mm,minimum size=1.2cm]                       (n4)  at ( 6,  3) {4};

  \node[shape=circle,draw=green!50!black,line width=.5mm,minimum size=1.2cm]             (n5)  at (-6,  0) {5};
  \node[shape=circle,draw=orange,line width=.5mm,minimum size=1.2cm]                     (n6)  at (-2,  0) {6};
  \node[shape=circle,draw=purple,line width=.5mm,minimum size=1.2cm]                     (n7)  at ( 2,  0) {7};
  \node[shape=circle,draw=magenta,line width=.5mm,minimum size=1.2cm]                    (n8)  at ( 6,  0) {8};

  \node[shape=circle,draw=green!50!black,line width=.5mm,minimum size=1.2cm]             (n9)  at (-6, -3) {9};
  \node[shape=circle,draw=blue!60!black,line width=.5mm,minimum size=1.2cm]               (n10) at (-2, -3) {10};
  \node[shape=circle,draw=black,minimum size=1.2cm]                                       (n11) at ( 2, -3) {11};
  \node[shape=circle,draw=brown,line width=.5mm,minimum size=1.2cm]                      (n12) at ( 6, -3) {12};

  \node[shape=circle,draw=black,minimum size=1.2cm]                                       (n13) at (-6, -6) {13};
  \node[shape=circle,draw=black,minimum size=1.2cm]                                       (n14) at (-2, -6) {14};
  \node[shape=circle,draw=black,minimum size=1.2cm]                                       (n15) at ( 2, -6) {15};
  \node[shape=circle,draw=black,minimum size=1.2cm]                                       (n16) at ( 6, -6) {16};

  
  \draw[->,line width=.5mm,shorten <=1pt,shorten >=1pt] (n13) to (n9);
  \draw[->,line width=.5mm,shorten <=1pt,shorten >=1pt] (n14) to (n9);

  \draw[->,line width=.5mm,shorten <=1pt,shorten >=1pt] (n11) to (n7);
  \draw[->,line width=.5mm,shorten <=1pt,shorten >=1pt] (n11) to (n8);
  \draw[->,line width=.5mm,shorten <=1pt,shorten >=1pt] (n16) to (n11);
  \draw[->,line width=.5mm,shorten <=1pt,shorten >=1pt] (n15) to (n11);
  \draw[->,line width=.5mm,shorten <=1pt,shorten >=1pt] (n11) to (n6);

  \draw[->,line width=.6mm,shorten <=1pt,shorten >=1pt,dashed,orange]            (n6)  to (n1);   
  \draw[->,line width=.6mm,shorten <=1pt,shorten >=1pt,dashed,purple]            (n7)  to (n4);   
  \draw[->,line width=.6mm,shorten <=1pt,shorten >=1pt,dashed,brown]             (n12) to (n8);   
  \draw[->,line width=.6mm,shorten <=1pt,shorten >=1pt,dashed,green!50!black]    (n9)  to (n6);   
  \draw[->,line width=.6mm,shorten <=1pt,shorten >=1pt,dashed,blue!60!black]      (n10) to (n6);   

  \draw[->,line width=.5mm,shorten <=1pt,shorten >=1pt,orange]         (n6) to (n2);   
  \draw[->,line width=.5mm,shorten <=1pt,shorten >=1pt,purple]         (n7) to (n3);   
  \draw[->,line width=.5mm,shorten <=1pt,shorten >=1pt,green!50!black] (n9) to (n5);   

  \draw[dashed,thick,rounded corners=12pt]
    (-7.4, 4.1) -- ( 7.4, 4.1) -- ( 7.4,-4.10) --
    ( 5.05,-4.10) -- ( 2.95,-1.70) --
    ( 1.00,-1.70) --
    (-0.95,-4.10) -- (-7.4,-4.10) -- cycle;
  \node at (6.35,4.40) {$\mathcal M$};

  \node[teal]             at (-6.20, 1.95) {$\{1\}$};
  \node[blue]             at ( 6.35, 1.95) {$\{4\}$};
  \node[magenta]          at ( 6.35, 0.95) {$\{8\}$};

  \node[orange]           at (-1.10, 1.45) {$F_{(6,1)}$};
  \node[purple]           at ( 2.80, 1.65) {$F_{(7,4)}$};
  \node[brown]            at ( 5.10,-2.05) {$F_{(12,8)}$};
  \node[green!50!black]   at (-5.35,-1.35) {$F_{(9,6)}$};
  \node[blue!60!black]     at (-0.95,-2.25) {$F_{(10,6)}$};

\end{tikzpicture}
\end{figure}

\begin{proposition}\label{prop:best_wing_tail_contracts}
Suppose $\mathcal G=(\mathcal I,\mathcal A)$ is a directed forest and let $a^*\in\arg\max_{a\in\mathcal A}\frac{P(F_a)}{W(F_a)}$ with $a^*=(u,v)$. If $\frac{P(F_{a^*})}{W(F_{a^*})}\ge\frac{p_f}{w_f}$ for every $f\in\mathcal I_\textnormal{f}$, then the two items $u$ and $v$ belong to the same macroitem in the optimal sequence.
\end{proposition}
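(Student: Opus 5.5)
The plan is to argue by contradiction, using three earlier results: the characterization of the optimal sequence through the iterated ratio problems (\cref{theo:lambdar} and \cref{pr:optimal_macroitems_maximize_ratio}), the decomposition of closed sets in forests (\cref{prop:disjoint_union_wings}), and the ratio property of disjoint unions (\cref{obs:disjoint}). Let $\rho^*:=\frac{P(F_{a^*})}{W(F_{a^*})}$.

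\emph{Step 1: every ratio in the optimal sequence is at most $\rho^*$.} Any closed set $\mathcal M$ (i.e.\ $\mathcal I^+(\mathcal M)=\emptyset$) is, by \cref{prop:disjoint_union_wings}, a disjoint union of sets $F_a$ with $a\in\mathcal A$ and of singletons of final items. By the choice of $a^*$ and the hypothesis on final items, each piece has ratio at most $\rho^*$. Applying \cref{obs:disjoint} inductively to the pieces then gives $\frac{P(\mathcal M)}{W(\mathcal M)}\le\rho^*$. By \cref{rem:ratio_problem_first_macroitem}, $\mathcal I_1$ maximizes the ratio over nonempty closed sets, so its ratio $\lambda_1$ satisfies $\lambda_1\le\rho^*$. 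Since the optimal sequence is decreasing, $\lambda_r<\rho^*$ for every $r\ge 2$.

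\emph{Step 2: set up the contradiction.} Let $u\in\mathcal I_r$ and $v\in\mathcal I_s$. Feasibility of the sequence together with $(u,v)\in\mathcal A$ gives $s\le r$. Suppose $s<r$. Then $v\in\mathcal M_{r-1}$, and since $\mathcal M_{r-1}$ is closed, $F_v\subset\mathcal M_{r-1}$, while $u\notin\mathcal M_{r-1}$. Consider the candidate set
\[
N:=F_u\setminus\mathcal M_{r-1}=F_{a^*}\setminus\mathcal M_{r-1}.
\]
It is nonempty because it contains $u$. It is feasible for problem \eqref{eq:max_ratio_problem} at step $r$, because $F_u\cup\mathcal M_{r-1}$ is closed.

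\emph{Step 3: lower-bound the ratio of $N$.} Write $F_{a^*}=N\sqcup(F_{a^*}\cap\mathcal M_{r-1})$. Since $u\notin\mathcal M_{r-1}$, we have $F_{a^*}\cap\mathcal M_{r-1}=(F_{a^*}\setminus\{u\})\cap\mathcal M_{r-1}$. By \cref{rem:wing_without_root_indep}, $F_{a^*}\setminus\{u\}$ is closed, and an intersection of closed sets is closed. So if this intersection is nonempty, Step 1 gives it ratio at most $\rho^*$. Then \cref{obs:disjoint} forces $\frac{P(N)}{W(N)}\ge\rho^*$; if the intersection is empty, $N=F_{a^*}$ directly. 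Hence $N$ is feasible at step $r$ with ratio at least $\rho^*>\lambda_r$. This contradicts \cref{theo:lambdar}, which states that $\lambda_r$ is the optimal value of that problem. Therefore $s=r$.

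The main obstacle is Step 3: it requires that $F_{a^*}\cap\mathcal M_{r-1}$ is genuinely closed, so that the decomposition bound applies, and that the orientation of \cref{obs:disjoint} yields a lower bound on the remainder. A further point to check is that the extended sequence of \cref{rem:M_r_def} is used, so that \cref{theo:lambdar} also covers indices $r$ whose ratio is negative.
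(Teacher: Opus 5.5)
Your proof is correct and is essentially the paper's argument. Both proofs use \cref{prop:disjoint_union_wings} and \cref{rem:wing_without_root_indep} to see that the part of $F_{a^*}$ lying in a closed prefix of the optimal sequence is closed with ratio at most $\rho^*$, so that \cref{obs:disjoint} gives $F_{a^*}$ minus that prefix ratio at least $\rho^*$, enough to beat the relevant macroitem. The differences are minor: you cut at $\mathcal M_{r-1}$ and conclude via \cref{theo:lambdar} with the strict bound $\lambda_r<\lambda_1\le\rho^*$, whereas the paper cuts at the prefix ending with $v$'s macroitem and exhibits an explicitly better sequence in the lexicographic order $\succ$.
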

\begin{proof}{Proof.}
Let $\mathcal S=\big(\mathcal I_1,\mathcal I_2,\dots,\mathcal I_{k(\mathcal S)}\big)$ be the optimal sequence of macroitems and let $t(v)\in\{1,2,\dots,k(\mathcal S)\}$ be the index of the macroitem containing $v$, i.e., $v\in\mathcal I_{t(v)}$. Consider the set of items
\[
\mathcal M:=\bigcup_{r=1}^{t(v)}\mathcal I_r.
\]
Notice that $\mathcal I^+(\mathcal M)=\emptyset$ since $\mathcal S$ is a feasible sequence of macroitems. 
 By \cref{prop:disjoint_union_wings} we have 
\begin{equation}\label{eq:M_decomposition_proof}
\mathcal M= \left(\bigsqcup_{a\in\tilde{\mathcal A}}F_a\right)\sqcup \tilde{\mathcal I}_\textnormal{f}
\end{equation}
for some $\tilde{\mathcal A}\subset\mathcal A$ and $\tilde{\mathcal I}_\textnormal{f}\subset\mathcal I_\textnormal{f}$. 

Suppose by contradiction that $u\notin \mathcal I_{t(v)}$ and thus $u\notin\mathcal M$ since $(u,v)\in\mathcal A$ and $\mathcal S$ is a feasible sequence of macroitems. Hence by \cref{rem:wing_without_root_indep} we have $\mathcal I^+(F_{a^*}\cap\mathcal M)=\emptyset$. Thus by \cref{prop:disjoint_union_wings} we have 
\begin{equation}\label{eq:Fastar_int_M_decomp_proof}
F_{a^*}\cap\mathcal M=\left(\bigsqcup_{a\in\mathcal A^*}F_a\right)\sqcup \mathcal I^*_\textnormal{f}
\end{equation}
for some $\mathcal A^*\subset\mathcal A$ and $\mathcal I^*_\textnormal{f}\subset\mathcal I_\textnormal{f}$.

We have
\[
\frac{P(F_{a^*}\setminus\mathcal M)}{W(F_{a^*}\setminus\mathcal M)}\ge\frac{P(F_{a^*})}{W(F_{a^*})}\ge\frac{P(\mathcal M)}{W(\mathcal M)}\ge\frac{P(\mathcal I_{t(v)})}{W(\mathcal I_{t(v)})}
\]
where in the last inequality we used the fact that $\mathcal S=\big(\mathcal I_1,\mathcal I_2,\dots,\mathcal I_{k(\mathcal S)}\big)$ is a decreasing sequence of macroitems.
The second inequality follows from the fact that $\frac{P(F_{a^*})}{W(F_{a^*})}\ge\frac{P(F_{a})}{W(F_a)}$ for every $a\in\mathcal A$ and $\frac{P(F_{a^*})}{W(F_{a^*})}\ge\frac{p_f}{w_f}$ for every $f\in\mathcal I_\textnormal{f}$, \cref{eq:M_decomposition_proof} and \cref{obs:disjoint}.
The first inequality follows again from the same properties of $F_{a^*}$, \cref{eq:Fastar_int_M_decomp_proof} and \cref{obs:disjoint}.
This chain of inequalities contradicts the hypothesis that $\mathcal S$ is the optimal sequence of macroitems, because the sequence obtained from
\[
\big(\mathcal I_1,\dots,\mathcal I_{t(v)-1},\, \mathcal I_{t(v)}\cup (F_{a^*}\setminus\mathcal M)\, ,\,
\mathcal I_{t(v)+1}\setminus (F_{a^*}\setminus\mathcal M)\,,\dots,\,\mathcal I_{k(\mathcal S)}\setminus(F_{a^*}\setminus\mathcal M)\big)
\]
after possibly removing some empty sets at the end, is feasible and higher than $\mathcal S$ in the lexicographic order associated to $\succ$. 
Indeed, $\frac{P(F_{a^*}\setminus \mathcal M)}{W(F_{a^*}\setminus \mathcal M)} > \frac{P(\mathcal I_{t(v)})}{W(\mathcal I_{t(v)})}$ and since $F_{a^*}\setminus \mathcal M$ and $\mathcal I_{t(v)}$ are disjoint, the ratio of their union is not lower than the ratio of $\mathcal I_{t(v)}$, by \cref{obs:disjoint}.
 \end{proof}
\cref{prop:best_wing_tail_contracts} allows us, by collapsing the arc $a^*$, to reduce the problem of finding the optimal sequence of macroitems to a smaller graph. An illustration of such an operation on an instance can be found in \cref{fig:alg_run_full}, where a full run of the main algorithm is presented.

\begin{proposition}\label{prop:best_final_nodes_are_macroitem}
Suppose $\mathcal G=(\mathcal I,\mathcal A)$ is a directed forest and let $g\in\mathcal B_\textnormal{f}:=\arg \max_{f\in\mathcal I_\textnormal{f}}\frac{p_f}{w_f}$. 
If $\frac{p_{g}}{w_{g}}>\frac{P(F_a)}{W(F_a)}$ for every $a\in\mathcal A$, then $\mathcal B_\textnormal{f}$ is the first macroitem in the optimal sequence of macroitems.
\end{proposition}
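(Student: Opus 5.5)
Our plan is to identify the first macroitem $\mathcal I_1$ of the optimal sequence $\mathcal S$ through \cref{rem:ratio_problem_first_macroitem} and \cref{theo:lambdar}. By these results, $\mathcal I_1$ is the optimal solution with largest support of the ratio problem \eqref{eq:ratio_problem}, i.e., of the problem of maximizing $P(\mathcal M)/W(\mathcal M)$ over nonempty $\mathcal M$ with $\mathcal I^+(\mathcal M)=\emptyset$. So it suffices to prove two things. First, every nonempty closed set $\mathcal M$ has ratio at most $\rho:=p_g/w_g$. Second, equality holds exactly when $\mathcal M\subset\mathcal B_\textnormal{f}$ and $\mathcal M\neq\emptyset$. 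Given these, since $\mathcal B_\textnormal{f}$ consists of final items, it is closed and has ratio $\rho$ (it is a disjoint union of singletons of ratio $\rho$). It is then the largest optimal set, hence $\mathcal I_1=\mathcal B_\textnormal{f}$.

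For the upper bound, take a nonempty $\mathcal M$ with $\mathcal I^+(\mathcal M)=\emptyset$ and apply \cref{prop:disjoint_union_wings}:
\[
\mathcal M=\Bigl(\bigsqcup_{a\in\tilde{\mathcal A}}F_a\Bigr)\sqcup\tilde{\mathcal I}_\textnormal{f}.
\]
Every piece $F_a$ has ratio strictly less than $\rho$ by hypothesis. Every final item $f\in\tilde{\mathcal I}_\textnormal{f}$ has ratio $p_f/w_f\le\rho$, with equality if and only if $f\in\mathcal B_\textnormal{f}$. Applying \cref{obs:disjoint} repeatedly (by induction on the number of pieces), the ratio of a disjoint union is at most the maximum ratio of its pieces, so $P(\mathcal M)/W(\mathcal M)\le\rho$.

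For the equality case, we use the fact that the ratio of a finite disjoint union is a weighted average of the piece ratios, with positive weights $W(\cdot)/W(\mathcal M)$. This is the mediant argument of \cref{obs:disjoint}, iterated. If some piece has ratio strictly below $\rho$, the average is strictly below $\rho$. Hence equality forces $\tilde{\mathcal A}=\emptyset$ and $\tilde{\mathcal I}_\textnormal{f}\subset\mathcal B_\textnormal{f}$, which gives $\mathcal M\subset\mathcal B_\textnormal{f}$. The main point needing care is exactly this strictness: \cref{obs:disjoint} as stated only gives non-strict bounds, with equality only when all ratios coincide. We will handle it by writing the union ratio explicitly as
\[
\frac{\sum_\ell P(N_\ell)}{\sum_\ell W(N_\ell)}=\sum_\ell\frac{W(N_\ell)}{\sum_m W(N_m)}\cdot\frac{P(N_\ell)}{W(N_\ell)},
\]
where the $N_\ell$ are the pieces of the decomposition.

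Finally, we tie this to the definition of the optimal sequence via the lexicographic order $\succ$. The first macroitem must be a closed set that is $\succ$-maximal: maximal ratio, and among those, maximal weight. The sets attaining ratio $\rho$ are exactly the nonempty subsets of $\mathcal B_\textnormal{f}$, and the heaviest of these is $\mathcal B_\textnormal{f}$ itself. Hence $\mathcal I_1=\mathcal B_\textnormal{f}$, consistent with \cref{theo:lambdar} choosing the optimal solution of largest support.
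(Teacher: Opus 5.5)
Your proposal is correct and follows essentially the same route as the paper: decompose any nonempty closed candidate $\mathcal M$ via \cref{prop:disjoint_union_wings}, bound its ratio by $p_g/w_g$ using \cref{obs:disjoint}, and observe that equality forces $\tilde{\mathcal A}=\emptyset$, so $\mathcal B_\textnormal{f}$ is the $\succ$-maximal first macroitem. Your explicit weighted-average argument for the strict case, together with the check that $\tilde{\mathcal I}_\textnormal{f}\subset\mathcal B_\textnormal{f}$, simply makes rigorous a step the paper states only briefly.
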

\begin{proof}{Proof.}
Since the optimal sequence of macroitems must be feasible, any candidate subset $\mathcal M\subset \mathcal I$ to be the first macroitem in the optimal sequence must satisfy $\mathcal I^+(\mathcal M)=\emptyset$. By \cref{prop:disjoint_union_wings} we have
\[\mathcal M=\left(\bigsqcup_{a\in\tilde{\mathcal A}}F_a\right)\sqcup \tilde{\mathcal I}_\textnormal{f}\]
for some subsets $\tilde{\mathcal A}\subset\mathcal A$, $\tilde{\mathcal I}_\textnormal{f}\subset\mathcal I_\textnormal{f}$. Since $g\in\mathcal B_\textnormal{f}=\arg\max_{f\in\mathcal I_\textnormal{f}}\frac{p_f}{w_f}$ and  $\frac{p_g}{w_g}>\frac{P(F_a)}{W(F_a)}$ for every $a\in\mathcal A$, by \cref{obs:disjoint} the ratio associated to the set $\mathcal B_\textnormal{f}$ is larger than or equal to the ratio associated to $\mathcal M$, with the equality possible only if $\tilde{\mathcal A}=\emptyset$.
This implies that $\mathcal B_\textnormal{f}$ is the subset of $\mathcal I$ of largest ratio with the largest support, i.e., the larger subset in the order $\succ$.
 \end{proof}

\begin{algorithm}[ht]
\caption{Algorithm to compute the optimal sequence of macroitems on a forest}\label{alg:algorithm}
\small
\begin{algorithmic}[1]
\State \textbf{input} $\mathcal I$, $\mathcal A$, $\mathcal I_\textnormal{f}$, $\boldsymbol p$, $\boldsymbol w$
\State\Comment{Initialization:}
\State $\mathcal I'=\mathcal I$, $\mathcal A'=\mathcal A$, $\mathcal I'_\textnormal{f}=\mathcal I_\textnormal{f}$\label{ln:start_init}
\For{$j\in\mathcal I'$}
\State Set $p'_j=p_j$, $w'_j=w_j$ profit and weight for item $j\in\mathcal I'$; $\mathcal M_j=\{j\}$
\EndFor
\State $k=0$\label{ln:end_init}
\While{$\mathcal I'\neq \emptyset$}\label{ln:start_main_cycle}
\State\Comment{Determine $a^*=(u,v)$ from \cref{prop:best_wing_tail_contracts} and its ratio $\frac{P(F_{a^*})}{W(F_{a^*})}$:}
\State $(u,v)\gets\Call{FindBestWing}{\mathcal I', \mathcal A'}$\label{ln:start_find_best_wing}\label{ln:end_find_best_wing}
\State\Comment{Determine the set $\mathcal B_\textnormal{f}$ from \cref{prop:best_final_nodes_are_macroitem} and the ratio $\frac{p'_g}{w'_g}$:}
\State $(g,\mathcal B_\textnormal{f})\gets\Call{FindBestFinalNodes}{\mathcal I'_\textnormal{f}}$\label{ln:start_find_best_final_nodes}\label{ln:end_find_best_final_nodes}
\State\Comment{If largest ratio is achieved by some final items (items in $\mathcal B_\textnormal{f}$), we get a new macroitem:}
\If{$\frac{p'_g}{w'_g}>\frac{P(F_{(u,v)})}{W(F_{(u,v)})}$}\label{ln:start_if_best_final}
\State $\Call{RemoveFinalNodes}{\mathcal B_\textnormal{f}}$\label{ln:end_if_best_final}
\Else\label{ln:start_if_best_wing}
\State\Comment{If largest ratio is achieved by (or also by) $F_{a^*}$ we \enquote{contract} the arc $a^*$:}
\State $\Call{ContractArc}{u,v}$\label{ln:end_if_best_wing}
\EndIf
\EndWhile\label{ln:end_main_cycle}
\State \Return{$\mathcal S=\big(\mathcal I_1,\mathcal I_2,\dots,\mathcal I_k\big)$}
\end{algorithmic}
\end{algorithm}

\begin{algorithm}[ht]
\caption{Sub-procedure \textsc{FindBestWing}: determine arc $a^*=(u,v)$ with largest ratio $\frac{P(F_{a^*})}{W(F_{a^*})}$}\label{alg:find_best_wing}
\small
\begin{algorithmic}[1]
\Procedure{FindBestWing}{$\mathcal I', \mathcal A'$}
\State Compute $P(F_j)$, $W(F_j)$ for every $j\in\mathcal I'$
\label{ln:compute_profit_weight_forward_trees}
\State $u=0$, $v=0$, $P(F_{(0,0)})=-\infty$, $W(F_{(0,0)})=1$
\For{$a=(i,j)\in\mathcal A'$}
\State $P(F_a)=P(F_i)-P(F_j)$, $W(F_a)=W(F_i)-W(F_j)$
\If{$\frac{P(F_a)}{W(F_a)}>\frac{P(F_{(u,v)})}{W(F_{(u,v)})}$} $u=i$, $v=j$ \EndIf
\EndFor
\State \Return $(u,v)$
\EndProcedure
\end{algorithmic}
\end{algorithm}

\begin{algorithm}[ht]
\caption{Sub-procedure \textsc{FindBestFinalNodes}: determine set $\mathcal B_\textnormal{f}$ of final items with largest ratio $\frac{p'_g}{w'_g}$}\label{alg:find_best_final_nodes}
\small
\begin{algorithmic}[1]
\Procedure{FindBestFinalNodes}{$\mathcal I'_\textnormal{f}$}
\State $g=0$, $p'_0=-\infty$, $w'_0=1$, $\mathcal B_\textnormal{f}=\emptyset$
\For{$f\in\mathcal I'_\textnormal{f}$}
\If{$\frac{p'_f}{w'_f}>\frac{p'_g}{w'_g}$} $g=f$, $\mathcal B_\textnormal{f}=\{f\}$
\ElsIf{$\frac{p'_f}{w'_f}=\frac{p'_g}{w'_g}$} $\mathcal B_\textnormal{f}=\mathcal B_\textnormal{f}\cup\{f\}$
\EndIf
\EndFor
\State \Return $(g,\mathcal B_\textnormal{f})$
\EndProcedure
\end{algorithmic}
\end{algorithm}

\begin{algorithm}[ht]
\caption{Sub-procedure \textsc{RemoveFinalNodes}: create next macroitem from $\mathcal B_\textnormal{f}$ and update the graph}\label{alg:remove_final_nodes}
\small
\begin{algorithmic}[1]
\Procedure{RemoveFinalNodes}{$\mathcal B_\textnormal{f}$}
\State $k=k+1$, $\mathcal I_k=\bigcup_{f\in\mathcal B_\textnormal{f}}\mathcal M_f$, $\mathcal C_\textnormal{f}=\emptyset$
\For{$i\in\mathcal I'^-(\mathcal B_\textnormal{f})$} $\mathcal C_\textnormal{f}=\mathcal C_\textnormal{f}\cup\{i\}$ 
\For{$(i,f)\in\mathcal A'$ with $f\in\mathcal B_\textnormal{f}$}
\State $\mathcal A'=\mathcal A'\setminus\{(i,f)\}$
\EndFor
\EndFor
\For{$i\in\mathcal C_\textnormal{f}$} \If{$\mathcal I'^+(i)=\emptyset$} $\mathcal I'_\textnormal{f}=\mathcal I'_\textnormal{f}\cup\{i\}$ \EndIf \EndFor
\State $\mathcal I'_\textnormal{f}=\mathcal I'_\textnormal{f}\setminus\mathcal B_\textnormal{f}$, $\mathcal I'=\mathcal I'\setminus\mathcal B_\textnormal{f}$
\EndProcedure
\end{algorithmic}
\end{algorithm}

\begin{algorithm}[htbp]
\caption{Sub-procedure \textsc{ContractArc}: contract arc $a^*=(u,v)$ and merge the associated item sets}\label{alg:contract_arc}
\small
\begin{algorithmic}[1]
\Procedure{ContractArc}{$u,v$}
\For{$i\in\mathcal I'^-(u)$} $\mathcal A'=\mathcal A'\setminus\{(i,u)\}$, $\mathcal A'=\mathcal A'\cup\{(i,v)\}$ \EndFor
\For{$j\in\mathcal I'^+(u)\setminus\{v\}$} $\mathcal A'=\mathcal A'\setminus\{(u,j)\}$, $\mathcal A'=\mathcal A'\cup\{(v,j)\}$ 
\If{$v\in\mathcal I'_\textnormal{f}$}
\State $\mathcal I'_\textnormal{f}=\mathcal I'_\textnormal{f}\setminus\{v\}$
\EndIf
\EndFor
\State $\mathcal M_v=\mathcal M_u\cup\mathcal M_v$, $p'_v=p'_u+p'_v$, $w'_v=w'_u+w'_v$
\State $\mathcal A'=\mathcal A'\setminus\{(u,v)\}$, $\mathcal I'=\mathcal I'\setminus\{u\}$
\EndProcedure
\end{algorithmic}
\end{algorithm}

In \cref{alg:algorithm}, after the initialization (lines \ref{ln:start_init}-\ref{ln:end_init}), which creates a copy $(\mathcal I',\mathcal A')$ of the graph and associates to each copied item $j$ a set of items $\mathcal M_j\subset\mathcal I$ in the original graph, then in the main cycle (lines \ref{ln:start_main_cycle}-\ref{ln:end_main_cycle}) at each iteration we either find a new macroitem in the optimal sequence as prescribed by \cref{prop:best_final_nodes_are_macroitem}, and we remove it from the graph $(\mathcal I',\mathcal A')$, or we \enquote{contract} an arc joining two adjacent items according to \cref{prop:best_wing_tail_contracts}.

In particular, in procedure \textsc{FindBestWing} (\cref{alg:find_best_wing}) we determine the arc $a^*=(u,v)\in\mathcal A'$ whose associated set $F_{a^*}$ is the one with the largest ratio.
Then, in procedure \textsc{FindBestFinalNodes} (\cref{alg:find_best_final_nodes}) we determine the set $\mathcal B_\textnormal{f}$ of the final items of $\mathcal I'$ with the largest ratio.
Now, if the largest determined ratio is achieved only by the final items in $\mathcal B_\textnormal{f}$, then the subset of items $\bigcup_{f\in\mathcal B_\textnormal{f}}\mathcal M_f\subset\mathcal I$ is the next macroitem in the optimal sequence by \cref{prop:best_final_nodes_are_macroitem}. Hence, procedure \textsc{RemoveFinalNodes} (\cref{alg:remove_final_nodes}) adds to the optimal sequence the new macroitem, and removes the items in $\mathcal B_\textnormal{f}$ together with the corresponding arcs from the graph $(\mathcal I',\mathcal A')$.
Conversely, if the largest determined ratio is achieved by (or also by) a set $F_{a^*}\subset\mathcal I'$ associated to some arc $a^*\in\mathcal A'$, procedure \textsc{ContractArc} (\cref{alg:contract_arc}) removes arc $a^*=(u,v)$ from $\mathcal A'$ according to \cref{prop:best_wing_tail_contracts}, joining the two items $u$ and $v$ and the corresponding sets $\mathcal M_u$, $\mathcal M_v$ of associated items in $\mathcal I$. An illustration of a full run of \cref{alg:algorithm} on a small instance can be found in \cref{fig:alg_run_full}.

\begin{theorem}\label{prop:main_algorithm_complexity}
\cref{alg:algorithm} computes the optimal sequence of macroitems on a directed forest in $O(n^2)$ time.
\end{theorem}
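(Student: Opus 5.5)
The proof splits into two parts: correctness, by an invariant maintained over the iterations of the main cycle, and complexity, by counting iterations and bounding the cost of each.

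\textbf{Correctness.} The invariant states the following. Let $\mathcal I_1,\dots,\mathcal I_k$ be the macroitems output so far. Then they are the first $k$ macroitems of the optimal sequence for the original instance. Moreover, the contracted forest $(\mathcal I',\mathcal A')$ with aggregated data $p'_j=P(\mathcal M_j)$, $w'_j=W(\mathcal M_j)$ has the following property: expanding each item $j$ into $\mathcal M_j$ turns its optimal sequence into the remaining part $(\mathcal I_{k+1},\dots)$ of the original optimal sequence.

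The invariant holds trivially after initialization. Two checks are needed to show it is preserved.
\begin{itemize}
\item[(i)] Contracting $\mathcal G'$ keeps it a directed forest. Merging the endpoints of an arc in a forest creates no undirected cycle. The final-item set is updated correctly, because the merged item $v$ inherits the out-arcs of $u$.
\item[(ii)] The current iteration is justified by the right proposition. If the \texttt{ContractArc} branch is taken, \cref{prop:best_wing_tail_contracts} guarantees that $u$ and $v$ lie in the same macroitem. Hence feasible sequences of the contracted graph correspond bijectively, with equal ratios and weights, to those feasible sequences of the current graph that keep $u,v$ together, and this class contains the optimum. The correspondence preserves feasibility because a macroitem containing the merged item $v$ must contain all out-neighbours of both $u$ and $v$ or predecessors thereof. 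If instead the \texttt{RemoveFinalNodes} branch is taken, \cref{prop:best_final_nodes_are_macroitem} identifies $\mathcal B_{\textnormal f}$ as the first macroitem. We then need the remaining optimal sequence to be the optimal sequence of $\mathcal G'\setminus\mathcal B_{\textnormal f}$. This follows from the lexicographic definition: the tail of a lexicographically maximal sequence, given its first element, is lexicographically maximal among feasible sequences of the remaining graph, and feasibility in the remaining graph is exactly feasibility after $\mathcal B_{\textnormal f}$.
\end{itemize}

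One edge case needs care: when $\mathcal A'=\emptyset$ the value returned by \textsc{FindBestWing} is $-\infty$. In that case only final items remain, and the algorithm reduces to the classical knapsack ordering of \cref{rem:classical_knapsack}.

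\textbf{Termination and complexity.} Each iteration either removes at least one item from $\mathcal I'$ (by contraction, or by deletion of $\mathcal B_{\textnormal f}\neq\emptyset$), so there are at most $n$ iterations. It then suffices to show that each iteration costs $O(n)$. In \textsc{FindBestWing}, the values $P(F_j),W(F_j)$ are computed for all $j$ by a single reverse-topological pass (a DFS over the forest), using $P(F_j)=p'_j+\sum_{k\in\mathcal I'^+(j)}P(F_k)$. This recursion is valid because in a forest the sets $F_k$ for distinct out-neighbours are disjoint; disjointness follows from the absence of undirected cycles. Computing this pass takes $O(|\mathcal I'|+|\mathcal A'|)=O(n)$. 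The scan over arcs, using $F_{(i,j)}=F_i\setminus F_j$ with $F_j\subset F_i$, is also $O(n)$. \textsc{FindBestFinalNodes} is a linear scan. \textsc{RemoveFinalNodes} and \textsc{ContractArc} touch only arcs incident to the affected items, which costs $O(n)$; the set union $\mathcal M_v\cup\mathcal M_u$ is $O(1)$ with linked lists, or $O(n)$ otherwise, which still suffices. Multiplying $n$ iterations by $O(n)$ per iteration gives $O(n^2)$.

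\textbf{Main obstacle.} The delicate step is the rigorous transfer of optimality through a contraction: showing that the lexicographic optimal sequence on the contracted instance corresponds to that on the original instance. I would prove it by exhibiting the bijection between feasible sequences of the contracted graph and feasible sequences of $\mathcal G'$ in which $u,v$ share a macroitem, and checking that this bijection preserves both the order $\succ$ and feasibility. Together with \cref{prop:best_wing_tail_contracts}, which places the optimum inside this class, this settles the step.
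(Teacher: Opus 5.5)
Your proposal is correct and follows the same route as the paper. Each iteration is justified by \cref{prop:best_wing_tail_contracts} or \cref{prop:best_final_nodes_are_macroitem}, there are at most $n$ iterations because $|\mathcal I'|$ strictly decreases, and each iteration costs $O(n)$ thanks to the bottom-up computation of $P(F_j),W(F_j)$ from the final items. Your invariant, the bijection between sequences of the contracted graph and sequences keeping $u,v$ together, and the tail argument for the removal branch spell out what the paper compresses into one sentence: these operations preserve the optimal sequence once each item $j$ is expanded into $\mathcal M_j$.
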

\begin{proof}{Proof.}
At each iteration, either Proposition 6 identifies the next macroitem and removes it, or Proposition 5 identifies an arc whose endpoints must belong to the same macroitem and contracts it. These operations preserve the optimal sequence of the original instance, after replacing each contracted item by the associated set $\mathcal M_j$. Since each iteration decreases the cardinality of $\mathcal I'$ by at least one, there are at most $n$ iterations. In a forest, completing the four procedures in \cref{alg:find_best_wing,alg:find_best_final_nodes,alg:remove_final_nodes,alg:contract_arc} can be done in $O(n)$ time. 
This is rather obvious for the last three procedures, each involving one or two {\tt for} cycles with at most $n$ iterations, and with a constant number of operations per iteration. Concerning the procedure in \cref{alg:find_best_wing}, it also takes $O(n)$ by a recursive computation of the values $P(F_j)$ and $W(F_j)$ starting from the final items of the graph, up to the initial ones.\newline
Hence, the (at most) $n$ iterations, with a computational cost per iteration equal to $O(n)$, require a number of operations which is $O(n^2)$.

\end{proof}

\begin{remark}\label{rem:numerical_precision}
We remark that the proposed approach does not suffer from numerical precision problems. Indeed, comparisons between ratios of integers can be obviously done by comparing integer numbers, while all other operations only involve integer values. This is a notable difference with respect, e.g., to the approach based on pseudoflow computations. Indeed, such approach might be unable to recognize two distinct macroitems when the difference between their ratios is below a given numerical precision, as discussed in \cref{sec:computational_results}.
\end{remark}

\begin{figure}
\centering
\caption{Run of Algorithm~\ref{alg:algorithm}. Ratios on arcs/final items are shown in purple. At each step, the selected maximum-ratio part is highlighted (orange for arc-based selections, teal for final-item-only selections).}
\label{fig:alg_run_full}
\vspace{2mm}

\begin{tikzpicture}[x=2.45cm,y=2.45cm]

  \tikzset{idx/.style={text=black,font=\scriptsize}}
  \def\splitpw#1#2#3{%
    \draw[line width=.2mm] (#1.north) -- (#1.south);
    \node[font=\small] at ([xshift=-.28cm]#1.center) {$#2$};
    \node[font=\small] at ([xshift=.28cm]#1.center) {$#3$};
  }

  \begin{scope}[shift={(0,0)}]
    \node[circle,draw=black,minimum size=1.25cm]                  (a1) at (0,2) {};
    \node[circle,draw=black,minimum size=1.25cm]                  (a2) at (1,2) {};
    \node[circle,draw=black,minimum size=1.25cm]                  (a3) at (0,1) {};
    \node[circle,draw=black,minimum size=1.25cm]                  (a4) at (1,1) {};
    \node[circle,draw=orange,line width=.5mm,minimum size=1.25cm] (a5) at (2,1) {};
    \node[circle,draw=black,minimum size=1.25cm]                  (a6) at (0,0) {};
    \node[circle,draw=orange,line width=.5mm,minimum size=1.25cm] (a7) at (1,0) {};

    \foreach \v/\p/\w in {a1/2/1,a2/-2/1,a3/-2/1,a4/-1/1,a5/2/1,a6/4/3,a7/5/1}{\splitpw{\v}{\p}{\w}}

    \node[idx] at ($(a1)+(0.33,-0.15)$) {$1$};
    \node[idx] at ($(a2)+(0.33,-0.15)$) {$2$};
    \node[idx] at ($(a3)+(0.33,-0.15)$) {$3$};
    \node[idx] at ($(a4)+(0.33,-0.15)$) {$4$};
    \node[idx] at ($(a5)+(0.33,-0.15)$) {$5$};
    \node[idx] at ($(a6)+(0.33,-0.15)$) {$6$};
    \node[idx] at ($(a7)+(0.33,-0.15)$) {$7$};

    \draw[->,line width=.5mm,shorten <=1pt,shorten >=1pt]               (a4) -- (a1);
    \draw[->,line width=.5mm,shorten <=1pt,shorten >=1pt]               (a4) -- (a2);
    \draw[->,line width=.5mm,shorten <=1pt,shorten >=1pt]               (a6) -- (a3);
    \draw[->,line width=.5mm,shorten <=1pt,shorten >=1pt]               (a6) -- (a4);
    \draw[->,line width=.5mm,shorten <=1pt,shorten >=1pt,dashed,orange] (a7) -- (a4);
    \draw[->,line width=.5mm,shorten <=1pt,shorten >=1pt,orange]        (a7) -- (a5);

    \path (a4) -- (a1) node[midway,fill=white,inner sep=1.2pt,text=purple] {$-\frac{3}{2}$};
    \path (a4) -- (a2) node[midway,fill=white,inner sep=1.2pt,text=purple] {$\frac{1}{2}$};
    \path (a6) -- (a3) node[midway,fill=white,inner sep=1.2pt,text=purple] {$1$};
    \path (a6) -- (a4) node[midway,fill=white,inner sep=1.2pt,text=purple] {$\frac{1}{2}$};
    \path (a7) -- (a4) node[midway,fill=white,inner sep=1.2pt,text=purple] {$\frac{7}{2}$};
    \path (a7) -- (a5) node[midway,fill=white,inner sep=1.2pt,text=purple] {$1$};

    \node[text=purple] at ($(a1)+(0,0.36)$) {$2$};
    \node[text=purple] at ($(a2)+(0,0.36)$) {$-2$};
    \node[text=purple] at ($(a3)+(0,0.36)$) {$-2$};
    \node[text=purple] at ($(a5)+(0,0.36)$) {$2$};
  \end{scope}

  \begin{scope}[shift={(3.6,0)}]
    \node[circle,draw=orange,line width=.5mm,minimum size=1.25cm] (b1) at (0,2) {};
    \node[circle,draw=black,minimum size=1.25cm]                  (b2) at (1,2) {};
    \node[circle,draw=black,minimum size=1.25cm]                  (b3) at (0,1) {};
    \node[circle,draw=orange,line width=.5mm,minimum size=1.25cm] (b4) at (1,1) {};
    \node[circle,draw=orange,line width=.5mm,minimum size=1.25cm] (b5) at (2,2) {};
    \node[circle,draw=black,minimum size=1.25cm]                  (b6) at (0,0) {};

    \foreach \v/\p/\w in {b1/2/1,b2/-2/1,b3/-2/1,b4/4/2,b5/2/1,b6/4/3}{\splitpw{\v}{\p}{\w}}

    \node[idx] at ($(b1)+(0.33,-0.15)$) {$1$};
    \node[idx] at ($(b2)+(0.33,-0.15)$) {$2$};
    \node[idx] at ($(b3)+(0.33,-0.15)$) {$3$};
    \node[idx] at ($(b4)+(0.33,-0.15)$) {$4$};
    \node[idx] at ($(b5)+(0.33,-0.15)$) {$5$};
    \node[idx] at ($(b6)+(0.33,-0.15)$) {$6$};

    \draw[->,line width=.5mm,shorten <=1pt,shorten >=1pt,orange]        (b4) -- (b1);
    \draw[->,line width=.5mm,shorten <=1pt,shorten >=1pt,dashed,orange] (b4) -- (b2);
    \draw[->,line width=.5mm,shorten <=1pt,shorten >=1pt]               (b6) -- (b3);
    \draw[->,line width=.5mm,shorten <=1pt,shorten >=1pt]               (b6) -- (b4);
    \draw[->,line width=.5mm,shorten <=1pt,shorten >=1pt,orange]        (b4) -- (b5);

    \path (b4) -- (b1) node[midway,fill=white,inner sep=1.2pt,text=purple] {$1$};
    \path (b4) -- (b2) node[midway,fill=white,inner sep=1.2pt,text=purple] {$2$};
    \path (b6) -- (b3) node[midway,fill=white,inner sep=1.2pt,text=purple] {$\frac{5}{4}$};
    \path (b6) -- (b4) node[midway,fill=white,inner sep=1.2pt,text=purple] {$\frac{1}{2}$};
    \path (b4) -- (b5) node[midway,fill=white,inner sep=1.2pt,text=purple] {$1$};

    \node[text=purple] at ($(b1)+(0,0.36)$) {$2$};
    \node[text=purple] at ($(b2)+(0,0.36)$) {$-2$};
    \node[text=purple] at ($(b3)+(0,0.36)$) {$-2$};
    \node[text=purple] at ($(b5)+(0,0.36)$) {$2$};
  \end{scope}

  \begin{scope}[shift={(0,-3.25)}]
    \node[circle,draw=teal,line width=.5mm,minimum size=1.25cm] (c1) at (0,2) {};
    \node[circle,draw=black,minimum size=1.25cm]                (c3) at (0,1) {};
    \node[circle,draw=black,minimum size=1.25cm]                (c2) at (1,1) {};
    \node[circle,draw=teal,line width=.5mm,minimum size=1.25cm] (c5) at (2,2) {};
    \node[circle,draw=black,minimum size=1.25cm]                (c6) at (0,0) {};

    \foreach \v/\p/\w in {c1/2/1,c3/-2/1,c2/2/3,c5/2/1,c6/4/3}{\splitpw{\v}{\p}{\w}}

    \node[idx] at ($(c1)+(0.33,-0.15)$) {$1$};
    \node[idx] at ($(c3)+(0.33,-0.15)$) {$3$};
    \node[idx] at ($(c2)+(0.33,-0.15)$) {$2$};
    \node[idx] at ($(c5)+(0.33,-0.15)$) {$5$};
    \node[idx] at ($(c6)+(0.33,-0.15)$) {$6$};

    \draw[->,line width=.5mm,shorten <=1pt,shorten >=1pt] (c2) -- (c1);
    \draw[->,line width=.5mm,shorten <=1pt,shorten >=1pt] (c2) -- (c5);
    \draw[->,line width=.5mm,shorten <=1pt,shorten >=1pt] (c6) -- (c3);
    \draw[->,line width=.5mm,shorten <=1pt,shorten >=1pt] (c6) -- (c2);

    \path (c2) -- (c1) node[midway,fill=white,inner sep=1.2pt,text=purple] {$1$};
    \path (c2) -- (c5) node[midway,fill=white,inner sep=1.2pt,text=purple] {$1$};
    \path (c6) -- (c3) node[midway,fill=white,inner sep=1.2pt,text=purple] {$\frac{5}{4}$};
    \path (c6) -- (c2) node[midway,fill=white,inner sep=1.2pt,text=purple] {$\frac{1}{2}$};

    \node[text=purple] at ($(c1)+(0,0.36)$) {$2$};
    \node[text=purple] at ($(c3)+(0,0.36)$) {$-2$};
    \node[text=purple] at ($(c5)+(0,0.36)$) {$2$};
  \end{scope}

  \begin{scope}[shift={(3.6,-3.25)}]
    \begin{scope}[shift={(0.00,1.00)}]
      \node[circle,draw=black,minimum size=1.25cm]                  (d3) at (0,1) {};
      \node[circle,draw=orange,line width=.5mm,minimum size=1.25cm] (d2) at (1,1) {};
      \node[circle,draw=orange,line width=.5mm,minimum size=1.25cm] (d6) at (0,0) {};

      \foreach \v/\p/\w in {d3/-2/1,d2/2/3,d6/4/3}{\splitpw{\v}{\p}{\w}}

      \node[idx] at ($(d3)+(0.33,-0.15)$) {$3$};
      \node[idx] at ($(d2)+(0.33,-0.15)$) {$2$};
      \node[idx] at ($(d6)+(0.33,-0.15)$) {$6$};

      \draw[->,line width=.5mm,shorten <=1pt,shorten >=1pt,dashed,orange] (d6) -- (d3);
      \draw[->,line width=.5mm,shorten <=1pt,shorten >=1pt,orange]        (d6) -- (d2);

      \path (d6) -- (d3) node[midway,fill=white,inner sep=1.2pt,text=purple] {$1$};
      \path (d6) -- (d2) node[midway,fill=white,inner sep=1.2pt,text=purple] {$\frac{1}{2}$};

      \node[text=purple] at ($(d3)+(0,0.36)$) {$-2$};
      \node[text=purple] at ($(d2)+(0,0.4)$) {$\frac{2}{3}$};
    \end{scope}

    \begin{scope}[shift={(1.00,1.00)}]
      \node[circle,draw=teal,line width=.5mm,minimum size=1.25cm] (e2) at (1,1) {};
      \node[circle,draw=black,minimum size=1.25cm]                (e3) at (0,0) {};

      \foreach \v/\p/\w in {e2/2/3,e3/2/4}{\splitpw{\v}{\p}{\w}}

      \node[idx] at ($(e2)+(0.33,-0.15)$) {$2$};
      \node[idx] at ($(e3)+(0.33,-0.15)$) {$3$};

      \draw[->,line width=.5mm,shorten <=1pt,shorten >=1pt] (e3) -- (e2);

      \path (e3) -- (e2) node[midway,fill=white,inner sep=1.2pt,text=purple] {$\frac{1}{2}$};
      \node[text=purple] at ($(e2)+(0,0.4)$) {$\frac{2}{3}$};
    \end{scope}

    \begin{scope}[shift={(0.45,-0.50)}]
      \node[circle,draw=teal,line width=.5mm,minimum size=1.25cm] (f3) at (0.5,0.5) {};
      \foreach \v/\p/\w in {f3/2/4}{\splitpw{\v}{\p}{\w}}
      \node[idx] at ($(f3)+(0.33,-0.15)$) {$3$};
      \node[text=purple] at ($(f3)+(0,0.4)$) {$\frac{1}{2}$};
    \end{scope}
  \end{scope}

  \node[font=\scriptsize,text width=6.2cm, anchor=north west, align=left] at (-0.3,-0.35) {%
    $a^*=(7,4)$ with ratio $\frac{7}{2}$;\quad
    $\mathcal{B}_\textnormal{f}=\{1,5\}$ with ratio $2$%
  };

  \node[font=\scriptsize,text width=6.2cm, anchor=north west, align=left] at (3.3,-0.35) {%
    $a^*=(4,2)$ with ratio $2$;\quad
    $\mathcal{B}_\textnormal{f}=\{1,5\}$ with ratio $2$;\\ 
    $\mathcal{M}_4=\{4,7\}$%
  };

  \node[font=\scriptsize,text width=6.2cm, anchor=north west, align=left] at (-0.3,-3.6) {%
    $a^*=(6,3)$ with ratio $1$;\quad
    $\mathcal{B}_\textnormal{f}=\{1,5\}$ with ratio $2$;\\
    $\mathcal{M}_2=\{2,4,7\}$;\quad $\mathcal I_1=\cup_{j\in\mathcal B_\textnormal{f}}\mathcal M_j=\{1,5\}$%
  };

  \node[align=center,font=\scriptsize,text width=7cm, anchor=north west, align=left] at (3.3,-3.6) {%
    top-left: $a^*=(6,3)$ with ratio $1$; $\mathcal{B}_\textnormal{f}=\{2\}$ with ratio $\frac 23$;\\$\mathcal{M}_2=\{2,4,7\}$\\ 
    top-right: $a^*=(3,2)$ with ratio $\frac{1}{2}$; $\mathcal{B}_\textnormal{f}=\{2\}$ with ratio $\frac{2}{3}$;\\$\mathcal{M}_2=\{2,4,7\}$, $\mathcal{M}_3=\{3,6\}$;
    $\mathcal I_2=\cup_{j\in\mathcal B_\textnormal{f}}\mathcal M_j=\{2,4,7\}$\\
    bottom: no arcs; $\mathcal{B}_\textnormal{f}=\{3\}$ with ratio $\frac{1}{2}$; $\mathcal I_3=\mathcal M_3=\{3,6\}$%
  };

\end{tikzpicture}
\end{figure}

\subsection{Dual Variant}\label{sec:dual_alg}
\cref{alg:algorithm} finds the macroitems in the optimal sequence in decreasing order with respect to their ratio. It is possible to devise an analogous variant of \cref{alg:algorithm}, which finds the macroitems in the optimal sequence in increasing order. In this variant, instead of computing the ratio of final items and preceding sets, we need to compute the ratio of \textit{initial items} and \textit{succeeding sets}, namely the sets $\mathcal I_\textnormal{i}$, $B_j$ and $B_a$ already introduced in \cref{sec:notation_trees}. A graphical illustration of initial items, and succeeding sets associated to items and arcs, can be found in \cref{fig:F_i_B_j_final_initial_nodes}.

We have the following three propositions, whose proofs are  analogous to those of \cref{prop:disjoint_union_wings,prop:best_wing_tail_contracts,prop:best_final_nodes_are_macroitem}, and we omit them in this paper.

\begin{proposition}\label{prop:disjoint_union_fins}
Suppose $\mathcal G=(\mathcal I,\mathcal A)$ is a directed forest and let $\mathcal M\subset\mathcal I$ be a subset of items. If $\mathcal I^-(\mathcal M)=\emptyset$, then
\[
\mathcal M=\left(\bigsqcup_{a\in \tilde{\mathcal A}} B_a\right)\sqcup \tilde{\mathcal I}_\textnormal{i}
\]
for some subset $\tilde{\mathcal A}\subset\mathcal A$ and $\tilde {\mathcal I}_\textnormal{i}\subset \mathcal I_\textnormal{i}$, i.e., $\mathcal M$ is the disjoint union of the minimal succeeding sets associated to some arcs and some subset of initial items.
\end{proposition}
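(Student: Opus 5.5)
The plan is to deduce the statement from \cref{prop:disjoint_union_wings} by reversing all arcs, and then to give a direct constructive argument as a check. Let $\overline{\mathcal G}=(\mathcal I,\overline{\mathcal A})$ be the reversed forest, with $\overline{\mathcal A}=\{(j,i)\colon (i,j)\in\mathcal A\}$. It is still a directed forest, since reversal preserves the underlying undirected graph and acyclicity.

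The dictionary between $\mathcal G$ and $\overline{\mathcal G}$ is as follows.
\begin{itemize}
\item Out-neighbors in $\overline{\mathcal G}$ are in-neighbors in $\mathcal G$. Hence $\overline{\mathcal I}^+(\mathcal M)=\mathcal I^-(\mathcal M)$.
\item The final items of $\overline{\mathcal G}$ are the initial items $\mathcal I_\textnormal{i}$ of $\mathcal G$.
\item The preceding set $\overline F_j$ in $\overline{\mathcal G}$ is the set reachable from $j$ along reversed arcs, which is exactly $B_j$.
\end{itemize}
For an arc $\bar a=(j,i)\in\overline{\mathcal A}$ coming from $a=(i,j)\in\mathcal A$, the minimal preceding set in $\overline{\mathcal G}$ is
\[
\overline F_{\bar a}=\overline F_j\setminus\overline F_i=B_j\setminus B_i=B_a.
\]
So the tail/head swap in the definition of $B_a$ is exactly what reversal produces. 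With these identifications, the hypothesis $\mathcal I^-(\mathcal M)=\emptyset$ becomes $\overline{\mathcal I}^+(\mathcal M)=\emptyset$. Applying \cref{prop:disjoint_union_wings} in $\overline{\mathcal G}$ then gives $\mathcal M$ as a disjoint union of sets $\overline F_{\bar a}=B_a$ and a subset of final items of $\overline{\mathcal G}$, i.e.\ of $\mathcal I_\textnormal{i}$. This is the claim, with $\tilde{\mathcal A}$ the set of original arcs corresponding to the chosen reversed arcs.

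As an alternative, the constructive proof of \cref{prop:disjoint_union_wings} can be mirrored directly. First, for each connected component of $(\mathcal M,\mathcal A(\mathcal M))$, pick one initial item. Such an item exists because the component is a finite DAG, and because $\mathcal I^-(\mathcal M)=\emptyset$ its sources inside $\mathcal M$ are initial in $\mathcal G$. Set $\mathcal M'$ to be these items; then $\mathcal I^-(\mathcal M')=\emptyset$. Next, repeatedly pick an item $i\in\mathcal M'$ with an out-neighbor in $\mathcal M\setminus\mathcal M'$. Follow a maximal forward path of arcs in $\mathcal A(\mathcal M)\setminus\mathcal A(\mathcal M')$ and add each of its arcs $a$ to $\tilde{\mathcal A}$, adjoining $B_a$ to $\mathcal M'$. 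The forest structure, with unique undirected paths, makes the new $B_a$ disjoint from each other and from the current $\mathcal M'$. It also keeps $\mathcal I^-(\mathcal M')=\emptyset$ and $B_a\subset\mathcal M$, because $\mathcal I^-(\mathcal M)=\emptyset$ means every item of $B_a$ lies in $\mathcal M$. The process terminates when $\mathcal M'=\mathcal M$.

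The main obstacle is the bookkeeping in the identification $\overline F_{\bar a}=B_a$: one has to check that the orientation convention in the definition of $B_a$ matches the reversed arc. That check is the one-line computation above, so the reduction is essentially immediate. The only other point needing a brief remark is that reversal preserves the forest property assumed in \cref{prop:disjoint_union_wings}.
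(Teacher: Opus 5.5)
Your proposal is correct and is essentially the paper's approach: the paper omits this proof as ``analogous'' to that of \cref{prop:disjoint_union_wings}. Your reversal dictionary ($\overline{\mathcal I}^+(\mathcal M)=\mathcal I^-(\mathcal M)$, final items of $\overline{\mathcal G}$ being the initial items of $\mathcal G$, and $\overline F_{\bar a}=B_j\setminus B_i=B_a$) makes that analogy precise, and your alternative forward-path construction is the direct mirror of the paper's proof of \cref{prop:disjoint_union_wings}.
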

\begin{proposition}\label{prop:worst_fin_head_contracts}
Suppose $\mathcal G=(\mathcal I,\mathcal A)$ is a directed forest and let $a^*\in\arg\min_{a\in\mathcal A}\frac{P(B_a)}{W(B_a)}$ with $a^*=(u,v)$. If $\frac{P(B_{a^*})}{W(B_{a^*})}\le\frac{p_i}{w_i}$ for every $i\in\mathcal I_\textnormal{i}$, then the two items $u$ and $v$ belong to the same macroitem in the optimal sequence.
\end{proposition}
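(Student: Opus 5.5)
We mirror the proof of \cref{prop:best_wing_tail_contracts}, reading the optimal sequence from its end. Let $\mathcal S=(\mathcal I_1,\dots,\mathcal I_{k(\mathcal S)})$ be the optimal sequence and let $t(u)$ be the index of the macroitem containing $u$. We consider the ``tail'' set
\[
\mathcal N:=\bigcup_{r=t(u)}^{k(\mathcal S)}\mathcal I_r .
\]
Since $\bigcup_{r<t(u)}\mathcal I_r$ is closed under out-neighbors, $\mathcal N$ is closed under in-neighbors, that is, $\mathcal I^-(\mathcal N)=\emptyset$. By \cref{prop:disjoint_union_fins}, $\mathcal N$ is therefore a disjoint union of sets $B_a$ and initial items.

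Suppose by contradiction that $v\notin\mathcal I_{t(u)}$. Feasibility then forces $v$ to lie in an earlier macroitem, so $v\notin\mathcal N$. By the analogue of \cref{rem:wing_without_root_indep}, $B_{a^*}\setminus\{v\}$ is a disjoint union of sets $B_k$ with $k\in\mathcal I^-(v)\setminus\{u\}$ together with $B_u$. Hence $B_{a^*}\cap\mathcal N$ is closed under in-neighbors, and \cref{prop:disjoint_union_fins} decomposes it as well. By the minimality of $P(B_{a^*})/W(B_{a^*})$ among arcs and initial items, \cref{obs:disjoint} gives
\[
\frac{P(B_{a^*}\setminus\mathcal N)}{W(B_{a^*}\setminus\mathcal N)}\le\frac{P(B_{a^*})}{W(B_{a^*})}\le\frac{P(\mathcal N)}{W(\mathcal N)}\le\frac{P(\mathcal I_{t(u)})}{W(\mathcal I_{t(u)})},
\]
where the last inequality holds because $\mathcal S$ is decreasing, so the first macroitem of the tail has the largest ratio in it.

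The set $D:=B_{a^*}\setminus\mathcal N$ is nonempty, since it contains $v$. It is closed under in-neighbors relative to the items outside $\mathcal N$, so moving it from the earlier macroitems into $\mathcal I_{t(u)}$ keeps the sequence feasible. Concretely, we replace each $\mathcal I_r$ with $r<t(u)$ by $\mathcal I_r\setminus D$ and $\mathcal I_{t(u)}$ by $\mathcal I_{t(u)}\cup D$, and then drop any empty sets.

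The main obstacle is showing that this modified sequence is strictly higher in the lexicographic order, which is where the argument is not a literal mirror of \cref{prop:best_wing_tail_contracts}. There, adding a high-ratio set to an \emph{earlier} position immediately improves the first differing macroitem. Here we remove a low-ratio set from earlier positions. To handle this, let $r_0$ be the first index with $\mathcal I_{r_0}\cap D\ne\emptyset$; this $r_0$ is the first differing position. We will show that $\mathcal I_{r_0}\setminus D$ beats $\mathcal I_{r_0}$ under $\succ$, or, if it is empty, that this case cannot occur.

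The key claim is that $D\cap\mathcal I_{r_0}$ has ratio at most that of $D$. This follows again from \cref{prop:disjoint_union_fins} applied to $D\cap\bigcup_{s\ge r_0}\mathcal I_s=D$, combined with the minimality of $a^*$; if needed, we will instead work with $D\cap\mathcal I_{r_0}$ directly, using that it is closed under in-neighbors within $\bigcup_{s\ge r_0}\mathcal I_s$. Given the claim, the ratio of $D\cap\mathcal I_{r_0}$ is at most that of $\mathcal I_{t(u)}$, which is strictly less than that of $\mathcal I_{r_0}$ because $\mathcal S$ is decreasing and $r_0<t(u)$. So \cref{obs:disjoint} shows that removing $D\cap\mathcal I_{r_0}$ strictly increases the ratio. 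If $\mathcal I_{r_0}\subset D$, the same inequality yields a contradiction directly.
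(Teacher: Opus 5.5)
Your plan is the natural mirror of the proof of \cref{prop:best_wing_tail_contracts}, which is all the paper offers: it omits this proof as ``analogous''. You are also right that, under the forward lexicographic order of \cref{def:optimal_seq}, moving $D$ into $\mathcal I_{t(u)}$ first changes the sequence at a position $r_0<t(u)$, so an extra argument is needed. That extra argument is your ``key claim'', and it is not established: both justifications you give rely on closure properties that fail.

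\begin{itemize}
\item $D$ is not closed under in-neighbors. Indeed $v\in D$ while its in-neighbor $u$ lies in $\mathcal N$, and items of $D$ may also have in-neighbors in $B_{a^*}\cap\mathcal N$. So \cref{prop:disjoint_union_fins} cannot be applied to $D$.
\item $D\cap\mathcal I_{r_0}$ is not closed under in-neighbors within $\bigcup_{s\ge r_0}\mathcal I_s$ either. In-neighbors of an item always lie in the same or \emph{later} macroitems, and $u$ is one example.
\end{itemize}

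Writing $\rho(X):=P(X)/W(X)$, the inequality $\rho(D\cap\mathcal I_{r_0})\le\rho(D)$ is therefore unsupported, and it is also stronger than you need. A smaller slip: $B_{a^*}\setminus\{v\}=\bigsqcup_{k\in\mathcal I^-(v)\setminus\{u\}}B_k$. There is no $B_u$ term, since $B_u$ is removed in $B_{a^*}=B_v\setminus B_u$.

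The missing idea is to pin down $r_0$ and then decompose the complement of $D\cap\mathcal I_{r_0}$ inside $B_{a^*}$, not $D$ itself.

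\begin{itemize}
\item Every item of $B_{a^*}$ has a directed path to $v$, so feasibility places it in a macroitem of index at least $t(v)$. Since $v\in D$, this gives $r_0=t(v)$ and $E:=D\cap\mathcal I_{r_0}=B_{a^*}\cap\mathcal I_{t(v)}\ni v$.
\item The set $B_{a^*}\setminus E=B_{a^*}\cap\bigcup_{s>t(v)}\mathcal I_s$ is closed under in-neighbors. A tail is closed under in-neighbors, and in a forest the only arc entering $B_{a^*}$ from outside is $(u,v)$, whose head $v$ is not in this set.
\item Applying \cref{prop:disjoint_union_fins}, the choice of $a^*$ and the hypothesis on initial items gives $\rho(B_{a^*}\setminus E)\ge\rho(B_{a^*})$ whenever that set is nonempty. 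Hence $\rho(E)\le\rho(B_{a^*})\le\rho(\mathcal N)\le\rho(\mathcal I_{t(u)})<\rho(\mathcal I_{t(v)})$.
\end{itemize}

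From here your ending works. The case $E=\mathcal I_{t(v)}$ is impossible. Otherwise $\rho(\mathcal I_{t(v)}\setminus E)>\rho(\mathcal I_{t(v)})$ by \cref{obs:disjoint}. Moreover $\bigcup_{s<t(v)}\mathcal I_s\cup(\mathcal I_{t(v)}\setminus E)$ is closed under out-neighbors, because every in-neighbor of an item of $E$ is either $u$, which lies in a later macroitem, or belongs to $B_{a^*}\cap\bigcup_{s\le t(v)}\mathcal I_s=E$. Therefore $(\mathcal I_1,\dots,\mathcal I_{t(v)-1},\mathcal I_{t(v)}\setminus E,E,\mathcal I_{t(v)+1},\dots)$, or your relocation of $D$, is feasible and lexicographically larger than $\mathcal S$, giving the contradiction.
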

\begin{proposition}\label{prop:worst_initial_nodes_are_macroitems}
Suppose $\mathcal G=(\mathcal I,\mathcal A)$ is a directed forest and let $g\in\mathcal D_\textnormal{i}:=\arg \min_{i\in\mathcal I_\textnormal{i}}\frac{p_i}{w_i}$. 
If $\frac{p_{g}}{w_{g}}<\frac{P(B_a)}{W(B_a)}$ for every $a\in\mathcal A$, then $\mathcal D_\textnormal{i}$ is the last macroitem in the optimal sequence of macroitems.
\end{proposition}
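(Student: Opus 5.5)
The natural route is to dualize the proof of \cref{prop:best_final_nodes_are_macroitem}. Consider any candidate $\mathcal M$ for the \emph{last} macroitem of a feasible sequence. Feasibility of the sequence means that $\mathcal I\setminus\mathcal M$ is closed under out-neighbors. Equivalently, no arc enters $\mathcal M$ from outside, i.e.\ $\mathcal I^-(\mathcal M)=\emptyset$. By \cref{prop:disjoint_union_fins} we can then write
\[
\mathcal M=\Big(\bigsqcup_{a\in\tilde{\mathcal A}}B_a\Big)\sqcup\tilde{\mathcal I}_\textnormal{i}.
\]
The hypothesis says $\frac{p_g}{w_g}<\frac{P(B_a)}{W(B_a)}$ for every arc $a$ and $\frac{p_g}{w_g}\le\frac{p_i}{w_i}$ for every initial item $i$. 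Applying \cref{obs:disjoint} repeatedly to this disjoint union, the ratio of $\mathcal M$ is at least $\frac{p_g}{w_g}$. Equality holds only if $\tilde{\mathcal A}=\emptyset$ and $\tilde{\mathcal I}_\textnormal{i}\subset\mathcal D_\textnormal{i}$. So among all sets admissible as a last block, $\mathcal D_\textnormal{i}$ attains the minimum ratio, and every other minimizer is a subset of $\mathcal D_\textnormal{i}$.

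Next, I would translate this into a statement about the optimal sequence $\mathcal S=(\mathcal I_1,\dots,\mathcal I_k)$. Its last macroitem $\mathcal I_k$ is admissible in the sense above. Since $\mathcal S$ is decreasing, a standard averaging argument (again \cref{obs:disjoint}) gives that $\mathcal I_k$ has the minimum ratio among all admissible last blocks. Suppose it did not. Take an admissible $\mathcal M$ with smaller ratio and split it as $\mathcal M\cap\mathcal I_k$ and $\mathcal M\setminus\mathcal I_k$. Then $\mathcal I_k\setminus\mathcal M$ (or $\mathcal I_k$ itself) can be placed later, or $\mathcal M$ can be moved later, producing a feasible sequence that is lexicographically higher. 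I would make this precise by exploiting the correspondence with breakpoints: \cref{pr:optimal_macroitems_maximize_ratio} together with \cref{theo:lambdar} and \cref{rem:M_r_def} identify $\mathcal I_k$ as the complement of $\mathcal M_{k-1}$. The set $\mathcal M_{k-1}$ is the maximal-support optimizer of the parametric closure problem at the smallest (extended) breakpoint. Hence $\mathcal I_k$ is the \emph{minimal-support} set achieving the minimum ratio among sets closed under in-neighbors, because complements of closures are exactly such sets.

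It then remains to combine the two facts. The minimum ratio equals $\frac{p_g}{w_g}$, and every minimizer is contained in $\mathcal D_\textnormal{i}$. Moreover, $\mathcal D_\textnormal{i}$ itself is a minimizer, because its items are initial, so it is closed under in-neighbors and all its items share the ratio $\frac{p_g}{w_g}$. The minimal-support minimizer is therefore $\mathcal I_k\subseteq\mathcal D_\textnormal{i}$.

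The main obstacle is the tie-breaking. The order $\succ$ favors \emph{larger} weight among equal-ratio macroitems at earlier positions, so the last block is the smallest equal-ratio piece. Any proper nonempty subset of $\mathcal D_\textnormal{i}$ has the same ratio. Therefore $\mathcal D_\textnormal{i}$ is forced as the last macroitem only if splitting it into several equal-ratio blocks is forbidden, which holds because $\mathcal S$ is \emph{decreasing} (\cref{rem:opt_sequence_unique_decr}). I would argue this directly. If $\mathcal I_k\subsetneq\mathcal D_\textnormal{i}$, then $\mathcal I_{k-1}$ has ratio strictly greater than $\frac{p_g}{w_g}$. Yet $\mathcal I_{k-1}\cup\mathcal I_k$ also admits a decomposition via \cref{prop:disjoint_union_fins}, and the items of $\mathcal D_\textnormal{i}\setminus\mathcal I_k$ lie in earlier blocks. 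Moving them into the last block keeps the sequence feasible and, by the lexicographic comparison at the first differing position, does not decrease it. This contradicts uniqueness, so $\mathcal I_k=\mathcal D_\textnormal{i}$. I expect most of the care to go into checking this exchange, in particular that removing initial items from earlier blocks preserves feasibility and improves (or keeps) the lexicographic order.
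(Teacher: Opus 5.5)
Your first paragraph is the argument the paper intends; its proof is omitted as analogous to that of \cref{prop:best_final_nodes_are_macroitem}. Every $\mathcal M$ with $\mathcal I^-(\mathcal M)=\emptyset$ decomposes via \cref{prop:disjoint_union_fins}. By \cref{obs:disjoint} and the strict hypothesis, its ratio is at least $\frac{p_g}{w_g}$, with equality only for nonempty subsets of $\mathcal D_\textnormal{i}$.

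The error is in how you identify the last macroitem. You claim that $\mathcal M_{k-1}$ is the maximal-support optimizer at the smallest extended breakpoint $\lambda_k$, and this is backwards. At $\lambda_k$ the optimal closures range from $\mathcal M_{k-1}$, the \emph{minimal} one, up to $\mathcal M_k=\mathcal I$, the maximal one. Hence $\mathcal I_k=\mathcal I\setminus\mathcal M_{k-1}$ is the \emph{largest} set with $\mathcal I^-(\cdot)=\emptyset$ of minimum ratio, not the smallest. Your reading of the tie-breaking (``the last block is the smallest equal-ratio piece'') is reversed for the same reason. Take two identical items and no arcs: the optimal sequence is the single block $(\mathcal I)$, whereas a minimal-support minimizer would be a singleton, and not even a unique one.

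As written the proposal is therefore internally inconsistent. It asserts that $\mathcal I_k$ is the minimal-support minimizer, then concludes $\mathcal I_k=\mathcal D_\textnormal{i}$, which is the maximal one whenever $|\mathcal D_\textnormal{i}|\ge 2$. Separately, the ``averaging argument since $\mathcal S$ is decreasing'' fails for arbitrary decreasing sequences. With no arcs and ratios $3$ and $1$, the one-block sequence is decreasing, but its block does not have minimum ratio. It is optimality, through the breakpoint characterization, that gives the claim.

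With the correct identification the proof closes at once, and this is the paper's route. All minimizers lie in $\mathcal D_\textnormal{i}$, and $\mathcal D_\textnormal{i}$ is itself a minimizer because initial items have no in-neighbors. So the largest-support minimizer is $\mathcal D_\textnormal{i}=\mathcal I_k$.

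Your exchange argument then becomes unnecessary, though it can be made rigorous as an alternative:
\begin{itemize}
\item It only needs $\mathcal I_k$ to be \emph{some} minimizer. This holds since $\mathcal M_{k-1}$ is optimal at $\lambda_k$, and it gives $\mathcal I_k\subseteq\mathcal D_\textnormal{i}$.
\item If the inclusion were strict, let $\mathcal I_s$ be the first block meeting $\mathcal D_\textnormal{i}\setminus\mathcal I_k$. Since $s<k$ and $\mathcal S$ is decreasing, the ratio of $\mathcal I_s$ exceeds $\frac{p_g}{w_g}$. So $\mathcal I_s\not\subseteq\mathcal D_\textnormal{i}$, and removing $\mathcal I_s\cap\mathcal D_\textnormal{i}$ \emph{strictly} raises its ratio by \cref{obs:disjoint}.
\item Moving those items into the last block preserves feasibility, since they have no in-neighbors.
\item The new sequence is therefore strictly higher lexicographically at position $s$. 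This contradicts maximality directly, rather than through ``does not decrease'' plus uniqueness.
\end{itemize}
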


Thanks to these results, \cref{alg:algorithm} can be converted to find the macroitems in the optimal sequence in increasing order with respect to their ratio. We refer to this algorithm as the \textit{dual variant} of \cref{alg:algorithm}, and an illustration of a full run of this dual variant can be found in \cref{fig:dual_alg_run_full}. 

We also have the same complexity of $O(n)$ per iteration and $O(n^2)$ total complexity, for which we give only the statement, since the proof is analogous to the one of \cref{prop:main_algorithm_complexity}.
\begin{theorem}\label{prop:dual_variant_Ccomplexity}
The dual variant of \cref{alg:algorithm} computes the optimal sequence of macroitems on a directed forest in $O(n^2)$ time.
\end{theorem}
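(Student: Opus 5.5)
The proof follows the pattern of \cref{prop:main_algorithm_complexity}, mirrored through the dual propositions. We describe the dual variant as the algorithm that, at each iteration on the current contracted forest $(\mathcal I',\mathcal A')$, does three things. First, it computes for every arc $a=(i,j)\in\mathcal A'$ the pair $P(B_a),W(B_a)$ and selects $a^*$ minimizing the ratio. Second, it computes the set $\mathcal D_\textnormal{i}$ of initial items of minimal ratio $\frac{p'_g}{w'_g}$. Third, it either removes $\mathcal D_\textnormal{i}$ or contracts the arc. If $\frac{p'_g}{w'_g}<\frac{P(B_{a^*})}{W(B_{a^*})}$, then $\bigcup_{g\in\mathcal D_\textnormal{i}}\mathcal M_g$ is prepended as the current last macroitem and the items of $\mathcal D_\textnormal{i}$ are deleted. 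Otherwise $a^*$ is contracted.

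\textbf{Correctness.} We argue by induction on the number of iterations, with the following invariant. The optimal sequence of the original instance equals, after replacing each item $j\in\mathcal I'$ by $\mathcal M_j$, the optimal sequence of the current instance followed by the macroitems already found.

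For a contraction step, \cref{prop:worst_fin_head_contracts} guarantees that $u$ and $v$ lie in the same macroitem. Merging them therefore gives a bijection between feasible sequences of the old graph in which $u,v$ share a macroitem and feasible sequences of the contracted graph. This bijection preserves profits, weights and hence the order $\succ$, so the optimal sequence is preserved. The point needing care is that the contracted graph is still a forest. Contracting an edge of a forest never creates a cycle, and the merged item carries profit $p'_u+p'_v$ and weight $w'_u+w'_v$, exactly as in \cref{alg:contract_arc}.

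For a removal step, \cref{prop:worst_initial_nodes_are_macroitems} shows that $\mathcal D_\textnormal{i}$ is the last macroitem. Since its items have no in-neighbours, deleting them leaves a forest. The optimal sequence of what remains is the optimal sequence of the old instance with its last element dropped. This requires an argument that lexicographic maximality of a sequence ending with a fixed block $\mathcal D$ is equivalent to lexicographic maximality of its prefix on $\mathcal I'\setminus\mathcal D$. This holds because feasibility of a prefix only involves out-neighbours inside the prefix, and $\mathcal I'\setminus\mathcal D$ is closed under out-neighbours. I expect this bookkeeping to be the main (mild) obstacle, since the paper omitted the proofs of the dual propositions; I would simply invoke them as stated.

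\textbf{Complexity.} Each iteration removes at least one item from $\mathcal I'$, so there are at most $n$ iterations. Within an iteration, the values $P(B_j),W(B_j)$ for all $j$ are computed in $O(n)$ by a traversal from the initial items toward the final ones, using $B_j=\{j\}\cup\bigcup_{k\in\mathcal I'^-(j)}B_k$. This union is disjoint because $\mathcal G$ is a forest: two in-neighbours of $j$ sharing an ancestor would create an undirected cycle. Then $P(B_a)=P(B_j)-P(B_i)$ and $W(B_a)=W(B_j)-W(B_i)$ for $a=(i,j)$, since $B_i\subset B_j$. The arc scan and the initial-item scan are $O(|\mathcal A'|+|\mathcal I'|)=O(n)$, as $m\le n-1$ in a forest. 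The removal and contraction updates are $O(n)$, exactly as in \cref{alg:remove_final_nodes,alg:contract_arc} with the roles of $\mathcal I'^+$ and $\mathcal I'^-$ swapped. The total is therefore $O(n)\cdot O(n)=O(n^2)$.
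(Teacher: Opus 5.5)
Your proposal is correct and is essentially the paper's argument. The paper proves this theorem only by declaring it analogous to \cref{prop:main_algorithm_complexity}, and you carry out exactly that mirroring: \cref{prop:worst_fin_head_contracts,prop:worst_initial_nodes_are_macroitems} give correctness, there are at most $n$ iterations of $O(n)$ work each, and $P(B_j),W(B_j)$ are obtained by a traversal from the initial items. Your extra bookkeeping (the invariant, preservation of the forest under contraction, and the common-suffix argument for removing $\mathcal D_\textnormal{i}$) is sound and makes explicit what the paper leaves implicit. The only detail to add is the convention that the minimum arc ratio is $+\infty$ when $\mathcal A'=\emptyset$, mirroring the $-\infty$ initialization in \cref{alg:find_best_wing}.
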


\begin{remark}\label{rem:primal_dual}
One can also combine the two approaches in a single iteration to halve the number of total iterations, but at the cost of increasing (doubling) the number of operations per iteration.
\end{remark}

\begin{figure}
\centering
\caption{Run of the dual variant of Algorithm~\ref{alg:algorithm} on the same instance as \cref{fig:alg_run_full}. Ratios on arcs/initial items are shown in purple. At each step, the selected minimum-ratio part is highlighted (green for arc-based sets $B_a$, brown for initial-item selections).}
\label{fig:dual_alg_run_full}
\vspace{5mm}

\begin{tikzpicture}[x=2.35cm,y=2.35cm]
\tikzset{idx/.style={text=black,font=\scriptsize}}
\def\splitpw#1#2#3{%
  \draw[line width=.2mm] (#1.north) -- (#1.south);
  \node[font=\small] at ([xshift=-.28cm]#1.center) {$#2$};
  \node[font=\small] at ([xshift=.28cm]#1.center) {$#3$};
}


\begin{scope}[shift={(0,0)}]
  \node[circle,draw=black,minimum size=1.2cm]                     (a1) at (0,2) {};
  \node[circle,draw=darkgreen,line width=.5mm,minimum size=1.2cm] (a2) at (1,2) {};
  \node[circle,draw=black,minimum size=1.2cm]                     (a3) at (0,1) {};
  \node[circle,draw=darkgreen,line width=.5mm,minimum size=1.2cm] (a4) at (1,1) {};
  \node[circle,draw=black,minimum size=1.2cm]                     (a5) at (2,1) {};
  \node[circle,draw=black,minimum size=1.2cm]                     (a6) at (0,0) {};
  \node[circle,draw=black,minimum size=1.2cm]                     (a7) at (1,0) {};

  \foreach \v/\p/\w in {a1/2/1,a2/-2/1,a3/-2/1,a4/-1/1,a5/2/1,a6/4/3,a7/5/1}{\splitpw{\v}{\p}{\w}}

  \node[idx] at ($(a1)+(0.33,-0.15)$) {$1$};
  \node[idx] at ($(a2)+(0.33,-0.15)$) {$2$};
  \node[idx] at ($(a3)+(0.33,-0.15)$) {$3$};
  \node[idx] at ($(a4)+(0.33,-0.15)$) {$4$};
  \node[idx] at ($(a5)+(0.33,-0.15)$) {$5$};
  \node[idx] at ($(a6)+(0.33,-0.15)$) {$6$};
  \node[idx] at ($(a7)+(0.33,-0.15)$) {$7$};

  \draw[->,line width=.5mm,shorten <=1pt,shorten >=1pt]                  (a4) -- (a1);
  \draw[->,line width=.5mm,shorten <=1pt,shorten >=1pt,dashed,darkgreen] (a4) -- (a2);
  \draw[->,line width=.5mm,shorten <=1pt,shorten >=1pt]                  (a6) -- (a3);
  \draw[->,line width=.5mm,shorten <=1pt,shorten >=1pt]                  (a6) -- (a4);
  \draw[->,line width=.5mm,shorten <=1pt,shorten >=1pt]                  (a7) -- (a4);
  \draw[->,line width=.5mm,shorten <=1pt,shorten >=1pt]                  (a7) -- (a5);

  \path (a4)--(a1) node[midway,fill=white,inner sep=1pt,text=purple] {$2$};
  \path (a4)--(a2) node[midway,fill=white,inner sep=1pt,text=purple] {$-2$};
  \path (a6)--(a3) node[midway,fill=white,inner sep=1pt,text=purple] {$-2$};
  \path (a6)--(a4) node[midway,fill=white,inner sep=1pt,text=purple] {$2$};
  \path (a7)--(a4) node[midway,fill=white,inner sep=1pt,text=purple] {$\frac{3}{4}$};
  \path (a7)--(a5) node[midway,fill=white,inner sep=1pt,text=purple] {$2$};

  \node[text=purple] at ($(a6)+(0,-0.40)$) {$\frac{4}{3}$};
  \node[text=purple] at ($(a7)+(0,-0.40)$) {$5$};
\end{scope}

\begin{scope}[shift={(3.4,0)}]
  \node[circle,draw=black,minimum size=1.2cm]                     (b1) at (0,2) {};
  \node[circle,draw=darkgreen,line width=.5mm,minimum size=1.2cm] (b3) at (0,1) {};
  \node[circle,draw=black,minimum size=1.2cm]                     (b4) at (1,1) {};
  \node[circle,draw=black,minimum size=1.2cm]                     (b5) at (2,1) {};
  \node[circle,draw=darkgreen,line width=.5mm,minimum size=1.2cm] (b6) at (0,0) {};
  \node[circle,draw=black,minimum size=1.2cm]                     (b7) at (1,0) {};

  \foreach \v/\p/\w in {b1/2/1,b3/-2/1,b4/-3/2,b5/2/1,b6/4/3,b7/5/1}{\splitpw{\v}{\p}{\w}}

  \node[idx] at ($(b1)+(0.33,-0.15)$) {$1$};
  \node[idx] at ($(b3)+(0.33,-0.15)$) {$3$};
  \node[idx] at ($(b4)+(0.33,-0.15)$) {$4$};
  \node[idx] at ($(b5)+(0.33,-0.15)$) {$5$};
  \node[idx] at ($(b6)+(0.33,-0.15)$) {$6$};
  \node[idx] at ($(b7)+(0.33,-0.15)$) {$7$};

  \draw[->,line width=.5mm,shorten <=1pt,shorten >=1pt]                  (b4) -- (b1);
  \draw[->,line width=.5mm,shorten <=1pt,shorten >=1pt,dashed,darkgreen] (b6) -- (b3);
  \draw[->,line width=.5mm,shorten <=1pt,shorten >=1pt]                  (b6) -- (b4);
  \draw[->,line width=.5mm,shorten <=1pt,shorten >=1pt]                  (b7) -- (b4);
  \draw[->,line width=.5mm,shorten <=1pt,shorten >=1pt]                  (b7) -- (b5);

  \path (b4)--(b1) node[midway,fill=white,inner sep=1pt,text=purple] {$2$};
  \path (b6)--(b3) node[midway,fill=white,inner sep=1pt,text=purple] {$-2$};
  \path (b6)--(b4) node[midway,fill=white,inner sep=1pt,text=purple] {$\frac{2}{3}$};
  \path (b7)--(b4) node[midway,fill=white,inner sep=1pt,text=purple] {$\frac{1}{5}$};
  \path (b7)--(b5) node[midway,fill=white,inner sep=1pt,text=purple] {$2$};

  \node[text=purple] at ($(b6)+(0,-0.40)$) {$\frac{4}{3}$};
  \node[text=purple] at ($(b7)+(0,-0.40)$) {$5$};
\end{scope}

\begin{scope}[shift={(0,-3.35)}]
  \node[circle,draw=black,minimum size=1.2cm]                     (c1) at (0,2) {};
  \node[circle,draw=darkgreen,line width=.5mm,minimum size=1.2cm] (c4) at (1,1) {};
  \node[circle,draw=black,minimum size=1.2cm]                     (c5) at (2,1) {};
  \node[circle,draw=darkgreen,line width=.5mm,minimum size=1.2cm] (c6) at (0,0) {};
  \node[circle,draw=darkgreen,line width=.5mm,minimum size=1.2cm] (c7) at (1,0) {};

  \foreach \v/\p/\w in {c1/2/1,c4/-3/2,c5/2/1,c6/2/4,c7/5/1}{\splitpw{\v}{\p}{\w}}

  \node[idx] at ($(c1)+(0.33,-0.15)$) {$1$};
  \node[idx] at ($(c4)+(0.33,-0.15)$) {$4$};
  \node[idx] at ($(c5)+(0.33,-0.15)$) {$5$};
  \node[idx] at ($(c6)+(0.33,-0.15)$) {$6$};
  \node[idx] at ($(c7)+(0.33,-0.15)$) {$7$};

  \draw[->,line width=.5mm,shorten <=1pt,shorten >=1pt]                  (c4) -- (c1);
  \draw[->,line width=.5mm,shorten <=1pt,shorten >=1pt,darkgreen]        (c6) -- (c4);
  \draw[->,line width=.5mm,shorten <=1pt,shorten >=1pt,dashed,darkgreen] (c7) -- (c4);
  \draw[->,line width=.5mm,shorten <=1pt,shorten >=1pt]                  (c7) -- (c5);

  \path (c4)--(c1) node[midway,fill=white,inner sep=1pt,text=purple] {$2$};
  \path (c6)--(c4) node[midway,fill=white,inner sep=1pt,text=purple] {$\frac{2}{3}$};
  \path (c7)--(c4) node[midway,fill=white,inner sep=1pt,text=purple] {$-\frac{1}{6}$};
  \path (c7)--(c5) node[midway,fill=white,inner sep=1pt,text=purple] {$2$};

  \node[text=purple] at ($(c6)+(0,-0.40)$) {$\frac{1}{2}$};
  \node[text=purple] at ($(c7)+(0,-0.40)$) {$5$};
\end{scope}

\begin{scope}[shift={(3.4,-3.35)}]
  \node[circle,draw=black,minimum size=1.2cm]                  (d1) at (0,2) {};
  \node[circle,draw=black,minimum size=1.2cm]                  (d5) at (2,2) {};
  \node[circle,draw=brown,line width=.5mm,minimum size=1.2cm]  (d6) at (0,0) {};
  \node[circle,draw=black,minimum size=1.2cm]                  (d7) at (1,1) {};

  \foreach \v/\p/\w in {d1/2/1,d5/2/1,d6/2/4,d7/2/3}{\splitpw{\v}{\p}{\w}}

  \node[idx] at ($(d1)+(0.33,-0.15)$) {$1$};
  \node[idx] at ($(d5)+(0.33,-0.15)$) {$5$};
  \node[idx] at ($(d6)+(0.33,-0.15)$) {$6$};
  \node[idx] at ($(d7)+(0.33,-0.15)$) {$7$};

  \draw[->,line width=.5mm,shorten <=1pt,shorten >=1pt] (d7) -- (d1);
  \draw[->,line width=.5mm,shorten <=1pt,shorten >=1pt] (d7) -- (d5);
  \draw[->,line width=.5mm,shorten <=1pt,shorten >=1pt] (d6) -- (d7);

  \path (d7)--(d1) node[midway,fill=white,inner sep=1pt,text=purple] {$2$};
  \path (d7)--(d5) node[midway,fill=white,inner sep=1pt,text=purple] {$2$};
  \path (d6)--(d7) node[midway,fill=white,inner sep=1pt,text=purple] {$\frac{2}{3}$};

  \node[text=purple] at ($(d6)+(0,-0.40)$) {$\frac{1}{2}$};
  
\end{scope}

\begin{scope}[shift={(0,-6.65)}]
  \node[circle,draw=black,minimum size=1.2cm]                  (e1) at (0,2) {};
  \node[circle,draw=black,minimum size=1.2cm]                  (e5) at (2,2) {};
  \node[circle,draw=brown,line width=.5mm,minimum size=1.2cm]  (e7) at (1,1) {};

  \foreach \v/\p/\w in {e1/2/1,e5/2/1,e7/2/3}{\splitpw{\v}{\p}{\w}}

  \node[idx] at ($(e1)+(0.33,-0.15)$) {$1$};
  \node[idx] at ($(e5)+(0.33,-0.15)$) {$5$};
  \node[idx] at ($(e7)+(0.33,-0.15)$) {$7$};

  \draw[->,line width=.5mm,shorten <=1pt,shorten >=1pt] (e7) -- (e1);
  \draw[->,line width=.5mm,shorten <=1pt,shorten >=1pt] (e7) -- (e5);

  \path (e7)--(e1) node[midway,fill=white,inner sep=1pt,text=purple] {$2$};
  \path (e7)--(e5) node[midway,fill=white,inner sep=1pt,text=purple] {$2$};

  \node[text=purple] at ($(e7)+(0,-0.40)$) {$\frac{2}{3}$};
\end{scope}

\begin{scope}[shift={(3.4,-6.65)}]
  \node[circle,draw=brown,line width=.5mm,minimum size=1.2cm]  (f1) at (0,2) {};
  \node[circle,draw=brown,line width=.5mm,minimum size=1.2cm]  (f5) at (2,2) {};

  \foreach \v/\p/\w in {f1/2/1,f5/2/1}{\splitpw{\v}{\p}{\w}}

  \node[idx] at ($(f1)+(0.33,-0.15)$) {$1$};
  \node[idx] at ($(f5)+(0.33,-0.15)$) {$5$};

  \node[text=purple] at ($(f1)+(0,-0.40)$) {$2$};
  \node[text=purple] at ($(f5)+(0,-0.40)$) {$2$};
\end{scope}


\node[font=\scriptsize,text width=5.8cm,anchor=north west,align=left] at (-0.3,-0.55) {%
$a^*=(4,2)$ with ratio $-2$;\quad
$\mathcal D_\textnormal{i}=\{6\}$ with ratio $\frac{4}{3}$%
};

\node[font=\scriptsize,text width=6.2cm,anchor=north west,align=left] at (3.1,-0.55) {%
$a^*=(6,3)$ with ratio $-2$;\quad
$\mathcal D_\textnormal{i}=\{6\}$ with ratio $\frac{4}{3}$;\\
$\mathcal M_4=\{2,4\}$%
};

\node[font=\scriptsize,text width=7cm,anchor=north west,align=left] at (-0.3,-3.85) {%
$a^*=(7,4)$ with ratio $-\frac{1}{6}$;\quad
$\mathcal D_\textnormal{i}=\{6\}$ with ratio $\frac{1}{2}$;\\
$\mathcal M_4=\{2,4\}$, 
$\mathcal M_6=\{3,6\}$%
};

\node[font=\scriptsize,text width=7cm,anchor=north west,align=left] at (3.1,-3.85) {%
$a^*=(6,7)$ with ratio $\frac{2}{3}$;\quad
$\mathcal D_\textnormal{i}=\{6\}$ with ratio $\frac{1}{2}$;\\
$\mathcal M_7=\{2,4,7\}$;
$\mathcal I_3=\mathcal M_6=\{3,6\}$%
};

\node[font=\scriptsize,text width=5.8cm,anchor=north west,align=left] at (-0.3,-6.2) {%
$a^*=(7,1)$ with ratio $2$;\quad
$\mathcal D_\textnormal{i}=\{7\}$ with ratio $\frac{2}{3}$;\\
$\mathcal I_2=\mathcal M_7=\{2,4,7\}$%
};

\node[font=\scriptsize,text width=5.8cm,anchor=north west,align=left] at (3.1,-6.2) {%
no arcs;\quad
$\mathcal D_\textnormal{i}=\{1,5\}$ with ratio $2$;\\
$\mathcal I_1=\{1,5\}$%
};

\end{tikzpicture}
\end{figure}

\subsection{Improved variants with all out-trees or all in-trees}\label{sec:heaped_alg} 
If we know that the forest graph $\mathcal G$ is composed of all trees whose arcs all point in the same direction, we can employ a heap data structure in \cref{alg:algorithm} and obtain a better total complexity of $O(n\log n)$. To describe this variant we first give the definition of out-tree and in-tree.

\begin{definition}\label{def:in-tree}
Let $\mathcal G=(\mathcal I,\mathcal A)$ be a directed connected tree. We say that $\mathcal G$ is an \textit{in-tree} if for every item $j\in\mathcal I$ we have $|\mathcal I^+(j)|\le 1$. As a consequence, there is only one item $j_0$ with $\mathcal I^+(j_0)=0$, which we call the \textit{root} of the in-tree.
\end{definition}

\begin{definition}\label{def:out-tree}
Let $\mathcal G=(\mathcal I,\mathcal A)$ be a directed connected tree. We say that $\mathcal G$ is an \textit{out-tree} if for every item $j\in\mathcal I$ we have $|\mathcal I^-(j)|\le 1$. As a consequence, there is only one item $j_0$ with $\mathcal I^-(j_0)=0$, which we call the \textit{root} of the out-tree.
\end{definition}

\cref{alg:in-tree} is devised for in-trees. In the initialization (lines \ref{ln:heaped_start_init}-\ref{ln:heaped_end_init}) we create a copy of the set of items and we insert the elements in a heap with respect to their ratio.
Then in the main cycle (lines \ref{ln:heaped_start_main_cycle}-\ref{ln:heaped_end_main_cycle}), at each iteration we extract the item $g$ at the top of the heap.
If item $g$ is a final item, we add it to the current (or to a new) macroitem (lines \ref{ln:heaped_start_if_best_final}-\ref{ln:heaped_end_if_best_final}), we delete it from the graph and we update the heap accordingly.
If item $g$ is not a final item, we contract the only arc exiting $g$ and we update the heap accordingly.

\begin{theorem}\label{prop:heaped_algorithm_complexity}
If $\mathcal G$ is a directed forest of in-trees, \cref{alg:in-tree} computes the optimal sequence of macroitems in $O(n\log n)$ time.
\end{theorem}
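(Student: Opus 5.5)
The plan is to show that \cref{alg:in-tree} is an implementation of \cref{alg:algorithm} specialised to in-trees. Correctness then follows from \cref{prop:best_wing_tail_contracts,prop:best_final_nodes_are_macroitem} exactly as in the proof of \cref{prop:main_algorithm_complexity}. The remaining work is to bound the cost of each iteration by $O(\log n)$ amortized instead of $O(n)$.

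\emph{Structural step.} In an in-tree every item has at most one out-neighbor, so $F_i=\{i\}\sqcup F_j$ for every arc $(i,j)\in\mathcal A$, and hence $F_{(i,j)}=\{i\}$. The quantity $\frac{P(F_a)}{W(F_a)}$ computed by \textsc{FindBestWing} is therefore just the ratio $\frac{p'_i}{w'_i}$ of the (contracted) tail item. The final items are exactly the roots of the in-trees in the current graph.

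Two invariants must be checked.
\begin{itemize}
\item[(i)] \textsc{ContractArc} applied to the unique out-arc $(u,v)$ of a non-root $u$ keeps the graph an in-forest. The in-arcs of $u$ are redirected to $v$, and $u$ has no other out-arc, so every item still has out-degree at most one.
\item[(ii)] \textsc{RemoveFinalNodes} deletes roots, and the in-neighbors of a deleted root become new roots. The result is again an in-forest.
\end{itemize}
Thus the whole state of \cref{alg:algorithm} is captured by the multiset of item ratios, each labelled as root or non-root. The test in line \ref{ln:start_if_best_final} becomes: is the maximum ratio attained only by roots? This is decided by a max-heap keyed lexicographically by (ratio, non-root flag), so that on ties a non-root is extracted first and contracted. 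This matches the ``$\ge$'' tie rule of \cref{alg:algorithm}.

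\emph{Handling $\mathcal B_\textnormal{f}$ one root at a time.} \cref{alg:algorithm} removes all of $\mathcal B_\textnormal{f}$ in one iteration, while the heap version extracts roots one by one. I will argue that extracting the top root and appending it to the current macroitem is equivalent whenever its ratio equals that of the current macroitem, and that otherwise it opens a new macroitem.
\begin{itemize}
\item Removing a root only turns its children into roots, without changing their keys. By the tie rule, any child with ratio equal to the current maximum would already have been contracted. So the roots with the maximal ratio are exactly the consecutive extractions of that ratio.
\item Since the optimal sequence is decreasing (\cref{rem:opt_sequence_unique_decr}), no later macroitem can share that ratio, so grouping consecutive equal-ratio roots reproduces $\mathcal B_\textnormal{f}$.
\end{itemize}
Comparisons are done by integer cross-multiplication, as in \cref{rem:numerical_precision}.

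\emph{Complexity step.} Each iteration extracts one item from the heap and permanently removes it from $\mathcal I'$, so there are at most $n$ iterations. The operations per iteration are:
\begin{itemize}
\item A contraction of $u$ into $v$ adds $p'_u,w'_u$ to $p'_v,w'_v$, which needs one key update of $v$ in $O(\log n)$ using stored heap handles.
\item A root removal changes the flag of each of its children. Each item becomes a root at most once, so this costs $O(n\log n)$ in total.
\end{itemize}

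The step I expect to be the main obstacle is redirecting the in-arcs of $u$ to $v$ cheaply. This cannot be done child by child, since that could cost $\Theta(n)$ per contraction. I would handle it in two parts.
\begin{itemize}
\item \emph{Parent queries.} Each child keeps a pointer to its original parent, and a union-find structure maps any item to its current contracted representative. Merging $u$ into $v$ is then a single union, and a child reaches its current parent with a find operation.
\item \emph{Child lists.} Children are stored in doubly linked lists, and the list of $u$ is concatenated to that of $v$ in $O(1)$. When a root is removed, its list is traversed once, and each item appears in such a traversal at most once.
\end{itemize}
With path compression these parts cost $O(n\,\alpha(n))$ overall. Adding the $O(n\log n)$ heap operations gives the claimed $O(n\log n)$ bound.
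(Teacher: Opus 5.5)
Your proposal is correct and follows essentially the same route as the paper. Correctness is inherited from \cref{alg:algorithm} via the observation $F_{(i,j)}=\{i\}$ for in-trees (the content of \cref{rem:why_heap}), and the bound comes from at most $n$ iterations, each with $O(\log n)$ heap work. You are in fact more careful than the paper's proof, which does not account for redirecting the in-arcs of a contracted item, for tie-breaking between roots and non-roots of equal ratio, or for extracting $\mathcal B_\textnormal{f}$ one root at a time; your lexicographic key and union-find/linked-list bookkeeping handle all three correctly within the $O(n\log n)$ budget.
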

\begin{proof}{Proof.}
We assume that the heap is a Fibonacci one (see, e.g., \cite[Section 1.1]{Fibonacci_Heap}). Building the heap requires $O(n)$ operations. Next, at each iteration of the main cycle, we first remove the element with largest value from the heap, which requires $O(\log n)$. Next, when we are contracting an arc, we need to update the value of a single element in the heap. This, again, requires $O(\log n)$ operations. Since the number of iterations is $O(n)$, we can conclude  that the overall complexity of \cref{alg:in-tree} is $O(n\log n)$.

\end{proof}

\begin{algorithm}[ht]
\caption{Improved variant of \cref{alg:algorithm} to compute the optimal sequence of macroitems on a forest of in-trees}
\label{alg:in-tree}
\small
\begin{algorithmic}[1]
\State \textbf{input} $\mathcal I$, $\mathcal A$, $\mathcal I_\textnormal{f}$, $\boldsymbol p$, $\boldsymbol w$
\State\Comment{Initialization:}
\State $\mathcal I'=\mathcal I$, $\mathcal A'=\mathcal A$, $\mathcal I'_\textnormal{f}=\mathcal I_\textnormal{f}$\label{ln:heaped_start_init}
\For{$j\in\mathcal I'$}
\State Set $p'_j=p_j$, $w'_j=w_j$ profit and weight for item $j\in\mathcal I'$; $\mathcal M_j=\{j\}$
\EndFor
\State $k=0$; Build max-heap with values $\frac{p'_j}{w'_j}$ for every $j\in\mathcal I'$; $r=+\infty$ \label{ln:heaped_end_init}
\While{$\mathcal I'\neq \emptyset$}\label{ln:heaped_start_main_cycle}
\State Let $g\in\mathcal I'$ be the item at the top of the heap (i.e., one of the items with largest ratio)
\State\Comment{If top of the heap $g$ is a final item, we add it to the next macroitem}
\If{$g\in\mathcal I'_\textnormal{f}$}\label{ln:heaped_start_if_best_final}
\If{$\frac{p'_g}{w'_g}<r$} $k=k+1$, $\mathcal I_k=\mathcal M_g$, $r=\frac{p'_g}{w'_g}$
\Else\ $\mathcal I_k=\mathcal I_k\cup\mathcal M_g$
\EndIf
\For{$i\in\mathcal I'^-(g)$} $\mathcal I'_\textnormal{f}=\mathcal I'_\textnormal{f}\cup\{i\}$, $\mathcal A'=\mathcal A'\setminus\{(i,g)\}$ \EndFor
\State $\mathcal I'_\textnormal{f}=\mathcal I'_\textnormal{f}\setminus\{g\}$, $\mathcal I'=\mathcal I'\setminus\{g\}$; remove $g$ from heap and update heap\label{ln:heaped_end_if_best_final}
\State\hspace{-1.5em}\Comment{If top of the heap $g$ is not a final item, we \enquote{contract} the only arc $(g,v)$ exiting $g$}
\Else\label{ln:heaped_start_if_best_wing}
\State $\mathcal M_v=\mathcal M_g\cup\mathcal M_v$, $p'_v=p'_g+p'_v$, $w'_v=w'_g+w'_v$
\State $\mathcal A'=\mathcal A'\setminus\{(g,v)\}$, $\mathcal I'=\mathcal I'\setminus\{g\}$; update $\frac{p'_v}{w'_v}$ in heap
\EndIf\label{ln:heaped_end_if_best_wing}
\EndWhile\label{ln:heaped_end_main_cycle}
\State \Return{$\mathcal S=\big(\mathcal I_1,\mathcal I_2,\dots,\mathcal I_k\big)$}
\end{algorithmic}
\end{algorithm}

\begin{remark}\label{rem:why_heap}
Notice that \cref{alg:in-tree} is doing essentially the same operations of \cref{alg:algorithm}, except the recomputation, at each iteration in the main cycle, of the values $P(F_j)$, $W(F_j)$ for every $j\in\mathcal I'$ (line \cref{ln:compute_profit_weight_forward_trees} in \cref{alg:find_best_wing}).
Indeed, when the graph is only composed of in-trees, for every arc $a=(i,j)\in\mathcal A$ the set of items $F_a$ is just the singleton $\{i\}$.
Furthermore, if at the end of an iteration of the main cycle we remove some final items, we do not have to update any of the values $P(F_a),W(F_a)$ with $a\in\mathcal A$, since final items do not belong to sets $F_a$ with $a\in\mathcal A$, when there are only in-trees.
If instead, at the end of an iteration of the main cycle we contract an arc $a=(i,j)\in\mathcal A$, we only need to update the value of $F_{a'}$, where $a'$ is the only arc exiting $j$, since in the graph there are only in-trees.
Thus, exploiting the in-tree structure, we can employ a heap to extract the item with the largest ratio at each iteration, without unnecessary recomputation.
\end{remark}
\begin{remark}\label{rem:why_not_heap}
\cref{rem:why_heap} also explains why a heap data structure cannot improve the worst case complexity of \cref{alg:algorithm}, when we have general trees. Indeed, if at the end of an iteration of the main cycle we are removing some final item $f\in\mathcal I'_\textnormal{f}$, then it is necessary to update the values of $P(F_a)$, $W(F_a)$ for every $a=(i,j)\in\mathcal A'$ with $i\in B_f$ and $j\notin B_f$, which could be a number of updates of  the order of $O(n)$ in the worst case. 
Similarly, if at the end of  an iteration of the main cycle we are contracting an arc $a^*\in\mathcal A'$, then it is necessary to update the values of $P(F_a)$, $W(F_a)$ for every $a=(i,j)$ with $i\in B_{a^*}$ and $j\notin B_{a^*}$, which could be a number of updates of the order of $O(n)$ in the worst case. Thus, maintaining a heap data structure with the relevant ratios, in the case of general trees, would cost $O(n\log n)$ operations per iteration of the main cycle, in the worst case, which would worsen the total complexity to $O(n^2 \log n)$ to complete all iterations, instead of $O(n^2)$ if we just recompute all relevant ratios at each iteration.
Nevertheless, using a heap in the main algorithm still provides a notable speedup in practice, as observed in \cref{sec:computational_results}.
\end{remark}

\begin{remark}\label{rem:dual_for_heaped}
The dual variant of \cref{alg:algorithm} presented in \cref{sec:dual_alg}, can also be improved using a heap-based data structure, in the case of a forest of out-trees. In this case we obtain an algorithm which computes the optimal sequence of macroitems in reverse order, i.e., starting from the macroitem with lowest ratio. The total complexity is $O(n \log n )$ also in this case. We state this result without proof, since it is analogous to the one of \cref{prop:heaped_algorithm_complexity}.
\end{remark}
\begin{theorem}\label{prop:dual_heaped_algorithm_complexity}
If $\mathcal G$ is a directed forest of out-trees, the dual variant of \cref{alg:in-tree} computes the optimal sequence of macroitems in $O(n\log n)$ time.
\end{theorem}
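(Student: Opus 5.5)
The proof will mirror that of \cref{prop:heaped_algorithm_complexity} by duality. First I will make the dual variant of \cref{alg:in-tree} explicit. Initialize a min-heap keyed by $\frac{p'_j}{w'_j}$ over all items. At each iteration, extract the item $g$ with smallest ratio. If $g$ is an initial item ($\mathcal I'^-(g)=\emptyset$ in the current graph), append $\mathcal M_g$ to the macroitem currently being built from the end of the sequence (open a new one if the ratio strictly increased), delete $g$, and declare its out-neighbors initial if they lost their last in-neighbor. Otherwise $g$ has a unique in-neighbor $v$, because we are in an out-forest; we then contract the arc $(v,g)$ into $v$, summing profits and weights, and update the key of $v$. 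Finally, the macroitems are output in reverse order.

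For correctness I will reproduce \cref{rem:why_heap} with the roles of $F$ and $B$ exchanged, and then apply \cref{prop:worst_fin_head_contracts,prop:worst_initial_nodes_are_macroitems}. In an out-forest, for every arc $a=(i,j)$ the set $B_a=B_j\setminus B_i$ is the singleton $\{j\}$. Indeed, $B_j=\{j\}\cup B_i$, since $i$ is the unique in-neighbor of $j$. Hence the quantities $\frac{P(B_a)}{W(B_a)}$ compared in the dual variant are exactly the current item ratios of the non-initial items, while the initial items contribute their own ratios. So the minimum over the heap equals the minimum of both quantities used in the dual variant of \cref{alg:algorithm}.

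It remains to check the tie-breaking rule. The dual variant, like \cref{alg:algorithm} with $>$ in the final-item test, must contract when an arc ratio ties the best initial ratio. With equal keys the heap may instead extract an initial item first. I expect this to be the main obstacle, and I plan two ways to handle it. The first is to break ties in the heap in favor of non-initial items. The second is to argue, as in \cref{alg:in-tree}, that equal-ratio initial items are merged into the same macroitem through the test on $r$, and that any pending equal-ratio contraction only adds items of the same ratio to that macroitem, which is consistent with \cref{obs:disjoint}.

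I also need both structural updates to be local. Removing an initial item $g$ changes no set $B_a$ other than turning children of $g$ into initial items, and their keys are unchanged. Contracting $(v,g)$ changes only the key of $v$, because $B_{a'}$ for the arcs $a'$ leaving the merged node is again a singleton. Together with the correctness of the dual variant (\cref{prop:dual_variant_Ccomplexity}), this shows that the dual heap version computes the same sequence.

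For the complexity, use a Fibonacci heap. Building it costs $O(n)$. Each iteration removes one item from $\mathcal I'$, so there are at most $n$ iterations. Each iteration performs one extract-min, costing $O(\log n)$ amortized, and at most one key update, costing $O(\log n)$. The adjacency bookkeeping is charged to the deleted arcs and sums to $O(n)$ over the whole run. The total is therefore $O(n\log n)$.
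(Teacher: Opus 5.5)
Your plan follows the paper's route. The paper gives no separate argument and simply dualizes the proof of \cref{prop:heaped_algorithm_complexity}: a Fibonacci heap built in $O(n)$, at most $n$ iterations, and one extraction plus at most one key update per iteration. Your correctness discussion is sound and more careful than the paper. It covers $B_{(i,j)}=\{j\}$ in an out-forest and the locality of both updates. It also handles ties, which the paper does not address; breaking ties in favour of non-initial items makes the heap run simulate the dual variant of \cref{alg:algorithm} step by step.

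The one step that fails as written is the final bookkeeping claim. Contracting $(v,g)$ deletes a single arc but \emph{moves} every current out-arc $(g,c)$ to $(v,c)$. The same child can be moved many times, so this cost cannot be charged to deleted arcs.

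For example, take unit weights and a path $a_0\to a_1\to\dots\to a_m$ with $p_{a_m}=-100$ and $p_{a_t}=0$ for $t<m$. Give $a_m$ a further $K$ children with profit $100$, where $m,K=\Theta(n)$. The heap repeatedly extracts the merged bottom node, whose ratio is $-100/(t+1)<0$, and contracts it into its parent. So the $K$ children are re-attached $m$ times, which is $\Theta(n^2)$ work if each child's in-neighbour pointer is rewritten.

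The repair is standard:
\begin{itemize}
\item Store each item's original in-neighbour and, for each merged set, its top item. The top item never changes, since $g$ is always merged into its parent $v$.
\item Obtain the current in-neighbour of an extracted node $g$ as the union--find representative of the original in-neighbour of its top item.
\item Test whether $g$ is initial by checking whether that in-neighbour does not exist or its representative has been removed.
\end{itemize}
No children then need to be rewired or explicitly declared initial. The bookkeeping costs $O(n\,\alpha(n))$, and the $O(n\log n)$ bound follows.
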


\section{Computational Results}\label{sec:computational_results}

This section evaluates the practical performance of the algorithms developed in
\cref{sec:tree_algorithms}. All algorithms were implemented in C++ and run
single-threaded, and all experiments were
conducted on a Linux machine equipped with an Intel i7-12700 processor (2.1
GHz) and 32 GB of RAM. We first describe the generation of the benchmark
instances used throughout the section. We then justify the heap-based implementation of
the main forest algorithm by comparing it with a non-heap implementation of the
same algorithm. We then study how the heap-based algorithm scales on the instances with a large number of items, and we assess the impact of the specialized
variants for in-trees and out-trees. Finally, we compare the heap-based forest
algorithm with a bounded-precision implementation of the parametric pseudoflow
approach.

\subsection{Instance Generation}\label{app:instance_generation}

This section describes the generation of the benchmark instances used throughout the computational experiments. The instances are grouped by number of items, forest density, orientation pattern, and profit-weight correlation class.

Each benchmark instance is obtained by combining two independent components: the item coefficients, namely weights and profits, and the precedence graph. The item coefficients are generated following the standard test classes used for the classical knapsack problem \citep{MPT99}. For each item $i$, the weight $w_i$ is sampled uniformly from $\{1,2,\dots,R\}$, with $R=1000$. Profits are generated according to three classes: uncorrelated ({\tt uncorr}), where $p_i$ is sampled independently and uniformly from $\{1,2,\dots,R\}$; weakly correlated ({\tt weakly-corr}), where $p_i$ is sampled uniformly from the integer interval between $\max\{1,w_i-R/10\}$ and $w_i+R/10$; and strongly correlated ({\tt strongly-corr}), where $p_i=w_i+R/10$.
We also generate signed variants of these three classes by independently changing the sign of each profit with probability $0.25$. We denote them by {\tt uncorr-neg}, {\tt weakly-corr-neg}, and {\tt strongly-corr-neg}, respectively.

The second component is the precedence graph. We consider three families of directed forests. In an {\tt in-forest}, every item has out-degree at most one. It is generated by scanning items $i \in \{1,2,\dots,n-1\}$ and, independently with probability $\rho$, adding one arc $(i,j)$ with $j$ chosen uniformly from $\{i+1,i+2,\dots,n\}$. In an {\tt out-forest}, every item has in-degree at most one. It is generated by scanning items $i \in \{2,3,\dots,n\}$ and, independently with probability $\rho$, adding one arc $(j,i)$ with $j$ chosen uniformly from $\{1,2,\dots,i-1\}$.
Finally, in a {\tt gen-forest}, we first generate an undirected forest by connecting each item $i \in \{2,3,\dots,n\}$ to a uniformly chosen predecessor $j<i$ with probability $\rho$, and then orient each selected edge independently in one of the two possible directions with probability $0.5$.
The density parameter is $\rho\in\{0.3,0.6,0.9,1.0\}$, referred to as {\tt sparse}, {\tt medium}, {\tt dense}, and {\tt conn}, respectively. The first three density classes generate forests, possibly with several connected components. The class {\tt conn} corresponds to $\rho=1.0$ and therefore generates a connected forest, which is in fact a single spanning tree.
For every combination of topology, profit-weight class, density, and size, we generate $10$ independent instances.

We use two main test beds in the computational experiments. The {\tt medium-sized} test bed contains $7\,200$ instances: all three topologies, all six profit-weight classes, all four density values, and ten seeds for each size in $n \in \{100,200,300,\dots,1\,000\}$.
The {\tt large-sized}  test bed contains $7\,200$ instances with the same combinations of topologies, profit-weight classes, densities, and seeds, for each size in $n \in \{10\,000,20\,000,\dots,100\,000\}$.

\subsection{Justification of the Heap-Based Variant}\label{subsec:heap_based_variant}

We implemented the main algorithm of the paper in a heap-based version,
following the practical variant anticipated in \cref{rem:why_not_heap}. We
refer to this implementation as the \emph{Heap-based Forest Macroitem
Algorithm} (\texttt{HFMA}). To isolate the effect of the heap data structure,
we compare \texttt{HFMA} with a non-heap implementation of the same forest
macroitem algorithm, denoted by \texttt{FMA}. This implementation follows
\cref{alg:algorithm} and uses the same incremental updates of closure sums as
\texttt{HFMA}, but it selects the maximum-ratio final item or arc by direct
linear scans, instead of maintaining the candidate ratios in a heap.

We tested both algorithms on the \texttt{gen-forest} instances of the
\texttt{medium-sized} test bed, plus the sizes $n=10\,000$ and $n=20\,000$ of
the \texttt{large-sized} test bed.
\Cref{fig:hfma_vs_fma_scaling} reports the average CPU time of \texttt{HFMA}
and \texttt{FMA} on these benchmark instances. Each point is the mean over all profit-weight
classes, arc densities, and seeds available for that value of $n$.

\begin{figure}[h]
\centering
\begin{tikzpicture}
\begin{axis}[width=0.78\textwidth, height=6cm,
  xlabel={$n$}, ylabel={CPU time (ms)},
  xmode=log, ymode=log,
  legend pos=north west,
  grid=major, grid style={gray!25},
  xmin=100, xmax=20000,
  xtick={100,1000,10000,20000},
  xticklabels={$10^2$,$10^3$,$10^4$,$2{\,}10^4$},
  scaled x ticks=false,
  /pgf/number format/.cd, fixed, 1000 sep={}]
\addplot[color=darkgreen, mark=*, thick] coordinates {
  (100,0.118)
  (200,0.258)
  (300,0.360)
  (400,0.505)
  (500,0.692)
  (600,0.899)
  (700,1.073)
  (800,1.085)
  (900,1.327)
  (1000,1.545)
  (10000,20.054)
  (20000,46.088)
};
\addlegendentry{\texttt{HFMA}}
\addplot[color=red!70, mark=square*, thick] coordinates {
  (100,0.076)
  (200,0.205)
  (300,0.385)
  (400,0.611)
  (500,0.833)
  (600,1.053)
  (700,1.387)
  (800,1.776)
  (900,2.523)
  (1000,3.107)
  (10000,353.002)
  (20000,1526.429)
};
\addlegendentry{\texttt{FMA}}
\end{axis}
\end{tikzpicture}
\caption{\texttt{HFMA} vs \texttt{FMA} CPU time on the \texttt{medium-sized}
  \texttt{gen-forest} benchmark instances, plus the sizes $n=10\,000$ and
  $n=20\,000$ of the \texttt{large-sized} test bed.}
\label{fig:hfma_vs_fma_scaling}
\end{figure}

The two implementations have comparable running times on the smallest
instances, where the overhead of maintaining a heap offsets the benefit of
faster selection. As $n$ grows, however, the repeated linear scans of
\texttt{FMA} become dominant. At $n=1000$, \texttt{FMA} is about
$2.0$ times slower than \texttt{HFMA}; at $n=10000$, the gap increases to about
$17.6$ times, and at $n=20000$ it further widens to about $33.1$ times. This
confirms that the heap-based implementation, despite not being asymptotically
preferable in the worst-case analysis, is suitable to obtain stable
performance on larger forest instances.

\subsection{Scaling of \texttt{HFMA}}\label{subsec:hfma_scaling}

We now evaluate the scalability of \texttt{HFMA} on the \texttt{gen-forest}
instances of the \texttt{large-sized} test bed. \Cref{fig:hfma_large_scaling} reports, for each
value of $n$, the number of tested instances and the corresponding mean CPU
time. Each row averages over all profit-weight classes, arc densities, and
seeds, for a total of $240$ instances.

\begin{figure}[h]
\centering
\caption{\texttt{HFMA} CPU time on the \texttt{large-sized} benchmark instances.}
\label{fig:hfma_large_scaling}
\begin{minipage}[t]{0.36\textwidth}
\vspace{0pt}
\centering
\footnotesize
\begin{tabular}{@{}rrr@{}}
\toprule
& & CPU time \\
$n$ & \#inst & \texttt{HFMA} (ms) \\
\midrule
10\,000 & 240 & 20.1 \\
20\,000 & 240 & 46.1 \\
30\,000 & 240 & 77.2 \\
40\,000 & 240 & 108.5 \\
50\,000 & 240 & 147.8 \\
60\,000 & 240 & 189.1 \\
70\,000 & 240 & 236.1 \\
80\,000 & 240 & 278.8 \\
90\,000 & 240 & 321.1 \\
100\,000 & 240 & 365.4 \\
\bottomrule
\end{tabular}
\end{minipage}\hspace{0.01\textwidth}
\begin{minipage}[t]{0.54\textwidth}
\vspace{0pt}
\centering
\begin{tikzpicture}
\begin{axis}[width=0.88\linewidth, height=5.7cm,
  xlabel={$n$}, ylabel={CPU time (ms)},
  legend pos=north west,
  grid=major, grid style={gray!25},
  xmin=10000, xmax=100000,
  xtick={10000,40000,70000,100000},
  minor xtick={20000,30000,50000,60000,80000,90000},
  xminorgrids=true, minor grid style={gray!15},
  scaled x ticks=false,
  /pgf/number format/.cd, fixed, 1000 sep={}]
\addplot[color=darkgreen, mark=*, thick] coordinates {
  (10000,20.054)
  (20000,46.088)
  (30000,77.233)
  (40000,108.491)
  (50000,147.762)
  (60000,189.144)
  (70000,236.055)
  (80000,278.829)
  (90000,321.053)
  (100000,365.357)
};
\addlegendentry{\texttt{HFMA}}
\end{axis}
\end{tikzpicture}
\end{minipage}
\end{figure}

The observed growth is smooth over the whole large-sized range. The CPU times appear to follow a $O(n\log n)$ growth.  This empirical behavior is well below the worst-case
complexity $O(n^2\log n)$ of the heap-based implementation on general forests. But this
is not in conflict with the theoretical analysis, since the complexity result refers to a 
worst-case analysis, whereas the reported times are averages over the generated
benchmark instances.

\subsection{Specialized Variants for In-Trees and Out-Trees}\label{subsec:specialized_algorithms_performance}

We finally evaluate the specialized heap-based variants for forests composed
only of in-trees or only of out-trees. We denote by \texttt{HIMA} the
\emph{Heap-based In-tree Macroitem Algorithm} and by \texttt{HOMA} the
\emph{Heap-based Out-tree Macroitem Algorithm}. The former is the algorithm of
\cref{sec:heaped_alg}; the latter is the dual heap-based variant described in
\cref{rem:dual_for_heaped}. Both are tested on the corresponding
\texttt{large-sized} instances.

\Cref{fig:specialized_large_scaling} reports the number of tested instances
and the mean CPU times of the specialized algorithms.

\begin{figure}[h]
\centering
\caption{Specialized heap-based variants on the \texttt{large-sized}
  benchmark instances.}
\label{fig:specialized_large_scaling}
\begin{minipage}[t]{0.43\textwidth}
\vspace{0pt}
\centering
\footnotesize
\begin{tabular}{@{}rrrr@{}}
\toprule
& & \multicolumn{2}{c}{CPU-time (ms)} \\
\cmidrule(l){3-4}
$n$ & \#inst & \texttt{HIMA} & \texttt{HOMA} \\
\midrule
10\,000 & 240 & 5.6 & 5.4 \\
20\,000 & 240 & 11.9 & 11.9 \\
30\,000 & 240 & 19.6 & 19.6 \\
40\,000 & 240 & 29.3 & 29.2 \\
50\,000 & 240 & 39.6 & 39.3 \\
60\,000 & 240 & 50.2 & 49.7 \\
70\,000 & 240 & 61.9 & 61.5 \\
80\,000 & 240 & 73.5 & 72.6 \\
90\,000 & 240 & 85.4 & 84.1 \\
100\,000 & 240 & 97.2 & 96.1 \\
\bottomrule
\end{tabular}
\end{minipage}\hspace{0.01\textwidth}
\begin{minipage}[t]{0.50\textwidth}
\vspace{0pt}
\centering
\begin{tikzpicture}
\begin{axis}[width=0.92\linewidth, height=5.7cm,
  xlabel={$n$}, ylabel={CPU time (ms)},
  legend pos=north west,
  grid=major, grid style={gray!25},
  xmin=10000, xmax=100000,
  xtick={10000,40000,70000,100000},
  minor xtick={20000,30000,50000,60000,80000,90000},
  xminorgrids=true, minor grid style={gray!15},
  scaled x ticks=false,
  /pgf/number format/.cd, fixed, 1000 sep={}]
\addplot[color=red!70, mark=square*, thick] coordinates {
  (10000,5.414)
  (20000,11.925)
  (30000,19.631)
  (40000,29.170)
  (50000,39.297)
  (60000,49.736)
  (70000,61.502)
  (80000,72.610)
  (90000,84.093)
  (100000,96.064)
};
\addlegendentry{\texttt{HOMA}}
\addplot[color=darkgreen, mark=triangle*, thick] coordinates {
  (10000,5.628)
  (20000,11.888)
  (30000,19.628)
  (40000,29.321)
  (50000,39.635)
  (60000,50.160)
  (70000,61.879)
  (80000,73.447)
  (90000,85.407)
  (100000,97.237)
};
\addlegendentry{\texttt{HIMA}}
\end{axis}
\end{tikzpicture}
\end{minipage}
\end{figure}

In this case, the CPU times of both \texttt{HIMA} and \texttt{HOMA} display a $O(n\log n)$ growth, which is aligned with the worst-case complexity of these algorithms. To quantify the benefit of
specialization, we compute, for each value of $n$, the ratio between the mean
CPU time of \texttt{HFMA} and the mean CPU time of the specialized algorithm,
both measured on the same set of instances (in-forest instances for
\texttt{HIMA}, out-forest instances for \texttt{HOMA}). With this measure,
\texttt{HIMA} is between $7.5$ and $9.8$ times faster than \texttt{HFMA} on
in-forest instances, while \texttt{HOMA} is between $4.3$ and $5.0$ times
faster than \texttt{HFMA} on out-forest instances.

\subsection{Comparison with Pseudoflow}\label{subsec:pseudoflow_comparison}

As a benchmark against a general parametric-flow approach, we use the
\emph{Bounded-Precision Parametric Pseudoflow} algorithm (\texttt{BPPF}),
namely the public implementation of Hochbaum's parametric pseudoflow method
available at
\url{https://github.com/hochbaumGroup/Bounded-precision-simple-parametric.git}.

We compare \texttt{HFMA} and \texttt{BPPF} on the \texttt{gen-forest} instances
of the \texttt{medium-sized} test bed.
This gives $2\,400$ instances, obtained from all $6$ profit-weight classes,
all $4$ arc densities, and $10$ seeds for each value of $n$. For each
instance, \texttt{HFMA} computes the optimal sequence of macroitems and the
corresponding breakpoint ratios. The same instance is also solved by
\texttt{BPPF}, and the two outputs are compared in terms of the number and
composition of macroitems.

\Cref{fig:forest_vs_pseudoflow_medium} reports the CPU time comparison. The
table on the left gives, for each value of $n$, the number of tested instances,
the average CPU time of \texttt{HFMA} and \texttt{BPPF}, and their ratio. The
plot on the
right represents the same average CPU times as a function of $n$.

\begin{figure}[h]
\centering
\caption{\texttt{HFMA} vs \texttt{BPPF} on the \texttt{medium-sized}
  \texttt{gen-forest} benchmark instances. The table reports the number of instances,
  the plotted mean CPU times, and their ratio; the graph shows the corresponding
  CPU-time profiles as a function of $n$.}
\label{fig:forest_vs_pseudoflow_medium}
\begin{minipage}[t]{0.43\textwidth}
\vspace{0pt}
\centering
\footnotesize
\begin{tabular}{@{}rrrrr@{}}
\toprule
& & \multicolumn{2}{c}{CPU-time (ms)} & \\
\cmidrule(lr){3-4}
$n$ & \#inst & \texttt{HFMA} & \texttt{BPPF} & ratio \\
\midrule
100  & 240 & 0.1 & 0.2 & 1.7 \\
200  & 240 & 0.2 & 0.6 & 2.4 \\
300  & 240 & 0.4 & 1.1 & 2.9 \\
400  & 240 & 0.5 & 1.7 & 3.4 \\
500  & 240 & 0.8 & 2.8 & 3.6 \\
600  & 240 & 1.0 & 3.7 & 3.7 \\
700  & 240 & 1.2 & 4.7 & 4.1 \\
800  & 240 & 1.4 & 6.2 & 4.6 \\
900  & 240 & 1.6 & 8.1 & 4.9 \\
1000 & 240 & 1.8 & 9.1 & 5.0 \\
\bottomrule
\end{tabular}
\end{minipage}\hspace{0.005\textwidth}
\begin{minipage}[t]{0.52\textwidth}
\vspace{0pt}
\centering
\begin{tikzpicture}
\begin{axis}[width=0.96\linewidth, height=5.7cm, xlabel={$n$}, ylabel={CPU time (ms)}, legend pos=north west, grid=major, grid style={gray!25}, xmin=100, xmax=1000, xtick={100,400,700,1000}, minor xtick={200,300,500,600,800,900}, xminorgrids=true, minor grid style={gray!15}]
\addplot[color=darkgreen, mark=*, thick] coordinates {
  (100,0.117)
  (200,0.246)
  (300,0.373)
  (400,0.514)
  (500,0.773)
  (600,1.001)
  (700,1.163)
  (800,1.369)
  (900,1.639)
  (1000,1.810)
};
\addlegendentry{\texttt{HFMA}}
\addplot[color=red!70, mark=square*, thick] coordinates {
  (100,0.204)
  (200,0.604)
  (300,1.067)
  (400,1.739)
  (500,2.797)
  (600,3.686)
  (700,4.728)
  (800,6.244)
  (900,8.070)
  (1000,9.133)
};
\addlegendentry{\texttt{BPPF}}
\end{axis}
\end{tikzpicture}
\end{minipage}
\end{figure}

The gap between \texttt{HFMA} and \texttt{BPPF} widens as $n$
grows: the ratio \texttt{BPPF}/\texttt{HFMA} increases from about
$1.7\times$ at $n=100$ to about $5.1\times$ at $n=1000$, roughly tripling
over the range. This is consistent with \texttt{BPPF} solving a general
parametric minimum-cut problem on the whole network, whereas \texttt{HFMA}
exploits the special forest structure of the precedence graph directly, so its
running time grows more slowly with $n$.

As a consistency check for the performance comparison, we also verified the
macroitem partitions returned by the two algorithms. Since \texttt{BPPF} is a
bounded-precision implementation, we ran it with a specified tolerance of $10^{-6}$ on the input
coefficients and on the resulting breakpoint ratios. The number and composition
of the macroitems coincide on $2\,392$ out of the $2\,400$ tested instances. In
the remaining $8$ instances, \texttt{BPPF} merges two consecutive macroitems
whose breakpoint ratios differ by less than this tolerance. These
cases occur only for instances with $700\le n\le 1\,000$. This is consistent
with the numerical-precision issue discussed in \cref{rem:numerical_precision}.
The precision parameter of
\texttt{BPPF} cannot be increased without qualification: the implementation
scales decimal capacities to integers, and using more decimal digits increases
the magnitude of the internal integer coefficients, thereby creating overflow. We also tested \texttt{BPPF} on instances with more than $1000$ items, but
on these instances the same numerical-precision issues prevent a reliable
comparison with \texttt{HFMA}.

\section{Conclusion}\label{sec:Conclusion}

In this paper we have studied the combinatorial structure of optimal solutions of the LP relaxation of the natural ILP formulation of the Precedence Constrained Knapsack Problem (PCKP). 

Our central contribution is the introduction of the concept of \textit{optimal sequence of macroitems}, an ordered partition of the item set into precedence-closed groups ranked by nonincreasing profit-to-weight ratio, and the proof that this sequence fully characterizes optimal LP solutions. Specifically, the optimal LP solution assigns value $1$ to all items in macroitems preceding the split macroitem, a common fractional value to all items in the split macroitem, and $0$ to all remaining items. This result generalizes the classical greedy structure of LP-optimal solutions, where the role of individual items is taken over by macroitems.

We have further characterized the structure of optimal dual solutions, showing that they correspond to feasible flows within each macroitem of the optimal sequence. As an additional consequence, we have identified the optimal Lagrangian multiplier for the capacity constraint as the profit-to-weight ratio of the split macroitem, recovering the analogue of the classical result for the standard Knapsack Problem.

On the algorithmic side, we have presented an $O(n^2)$ algorithm for computing the optimal sequence of macroitems when the precedence graph is a directed forest, based on iterative contraction of the arc with the highest profit-to-weight ratio among all minimal preceding sets. When the forest is composed of in-trees (or out-trees), we have shown that a heap-based variant of the algorithm achieves $O(n \log n)$ complexity, by exploiting the fact that each arc's minimal preceding set is a singleton in this case. As a by-product, since its optimal solution coincides with the first (resp.\ last) macroitem of the optimal sequence, the same algorithms also compute the optimal solution of the precedence-constrained ratio optimization problem \eqref{eq:ratio_problem} introduced in \cref{sec:ratio_problem}, within the same complexity bounds.

An interesting open future research direction is to study whether efficient algorithms for computing the optimal sequence of macroitems can be devised beyond the forest case, and whether the LP structure identified here can be embedded into branch-and-bound frameworks to yield improved exact algorithms for the PCKP.

\begingroup \parindent 0pt \parskip 4ex
\def\enotesize{\normalsize} 
\endgroup


\section*{Code and Data Availability}

The source code and computational material used in the experiments are available
online, with the aim of stimulating further research on these topics and
facilitating reproducible comparisons. The code repository is
\url{https://github.com/fabiofurini/macroitems-cpp}; it contains the C++
implementation of the forest macroitem algorithms and the scripts used to
reproduce the benchmark runs. The data repository is the \texttt{data} directory
of the same GitHub repository:
\url{https://github.com/fabiofurini/macroitems-cpp/tree/main/data}; it contains
the raw CSV files underlying the computational figures and tables. In
particular, the data files report one row per tested instance and include the
instance class, topology, density, seed, number of macroitems, and running time.
The pseudoflow comparison uses the public
Bounded-Precision Parametric Pseudoflow implementation available at
\url{https://github.com/hochbaumGroup/Bounded-precision-simple-parametric.git}.

\end{document}